\NeedsTeXFormat{LaTeX2e}
\documentclass[a4paper,12pt,reqno]{amsart}
\usepackage[utf8]{inputenc}
\usepackage{amssymb}
\usepackage{amsmath}
\usepackage{amsthm}
\usepackage{amsfonts}
\usepackage{mathtools}
\usepackage{makecell}
\usepackage{multirow}
\usepackage{hyperref}
\usepackage{cleveref}
\usepackage{todonotes}
\usepackage{comment}
\usepackage{lipsum}
\usepackage{mathrsfs}
\usepackage{enumitem}
\usepackage{todonotes}
\usepackage{verbatim}
\usepackage[title]{appendix}
\makeatletter
\def\dicho#1{\expandafter\@dicho\csname c@#1\endcsname}
\def\@dicho#1{\ifnum#1>1 or \else\fi(\@Roman#1)}
\makeatother
\AddEnumerateCounter{\dicho}{\@dicho}{or (III)}
\newlist{dichotomy}{enumerate}{1}
\setlist[dichotomy]{label=\dicho*,leftmargin=1.5cm}

\setlength\marginparwidth{2cm}

\usepackage[british]{babel}
\usepackage[margin=1.2in]{geometry}
\setlength{\belowcaptionskip}{-0.3em}

\usepackage{eqparbox}
\newcommand{\eqmathbox}[2][M]{\eqmakebox[#1]{$\displaystyle#2$}}
\usepackage{pbox}

\usepackage{tikz} 
\usepackage{adjustbox}
\usepackage{pgfplots}


\usetikzlibrary {arrows.meta,bending}
\definecolor{color0}{HTML}{000000}
\definecolor{color1}{HTML}{000000}
\definecolor{bgColor}{HTML}{ffffff}


\renewcommand{\O}{\mathcal{O}} 

\newcommand{\nequiv}{\not\equiv}
\newcommand{\set}[1]{\left\lbrace #1 \right\rbrace}

\newcommand{\field}[1]{\mathbb{#1}}  
\newcommand{\Q}{\field{Q}} 
\newcommand{\Z}{\field{Z}} 
\newcommand{\F}{\field{F}} 

\newcommand{\G}{\field{G}}

\renewcommand{\P}{\field{P}}



\newcommand{\Mod}[1]{\ (\mathrm{mod}\ #1)}
\newcommand{\Nm}{\textup{Nm}}
\newcommand{\fp}{\mathfrak{p}}

\newcommand{\fq}{\mathfrak{q}}

\newcommand{\legendre}[2]{\left(\frac{#1}{#2}\right)}
\newcommand{\lcm}{\textup{lcm}}


\newcommand{\minor}[1]{}
\newcommand{\main}[1]{}


\DeclareMathOperator{\trace}{Tr}

\DeclareMathOperator{\Frob}{Frob}
\DeclareMathOperator{\frob}{Frob}
\DeclareMathOperator{\Ver}{Ver}

\DeclareMathOperator{\Hom}{Hom}
\DeclareMathOperator{\Aut}{Aut}

\DeclareMathOperator{\Spec}{Spec}

\DeclareMathOperator{\Gal}{Gal}

\newcommand{\eps}{\varepsilon} 

\DeclareMathOperator{\BADFI}{\textup{\textsf{BadFI}}}

\DeclareMathOperator{\IsogPrimeDeg}{\textup{\textsf{IsogPrimeDeg}}}
\DeclareMathOperator{\IsogPrimeDegNonCM}{\textup{\textsf{IsogPrimeDegNonCM}}}
\DeclareMathOperator{\IsogPrimeDegNonCMUniform}{\textup{\textsf{IsogPrimeDegNonCM}}}

\newcommand{\TypeTwoNotMomoseBound}{\textup{\textsf{TypeTwoNotMomoseBound}}}
\newtheorem{lemma}{Lemma}
\newtheorem{theorem}[lemma]{Theorem}
\newtheorem{proposition}[lemma]{Proposition}
\newtheorem{algorithm}[lemma]{Algorithm}
\newtheorem{corollary}[lemma]{Corollary}

\theoremstyle{definition}
\newtheorem{definition}[lemma]{Definition}

\newtheorem{question}[lemma]{Question}
\newtheorem{remark}[lemma]{Remark}
\newtheorem{example}[lemma]{Example}

\numberwithin{lemma}{section}
\numberwithin{equation}{section} 
\numberwithin{figure}{section}

\title{Towards strong uniformity for isogenies of prime degree}
\author{Barinder S. Banwait}
\address{Barinder S. Banwait \\
London,
UK}
\email{barinder.s.banwait@gmail.com}

\author{Maarten Derickx}
\address{Maarten Derickx,
Department of Mathematics,
University of Zagreb,
Croatia}
\email{maarten@mderickx.nl}
\date{}

\makeatletter
\providecommand\@dotsep{5}
\renewcommand{\listoftodos}[1][\@todonotes@todolistname]{%
  \@starttoc{tdo}{#1}}
\makeatother

\subjclass[2010]
{11G05  (primary), 
11Y60,   
11G15.   
(secondary)}


\begin{document}

\begin{abstract}
Let $E$ be an elliptic curve over a number field $k$ of degree $d$ that admits a $k$-rational isogeny of prime degree $p$. We study the question of finding a uniform bound on $p$ that depends only on $d$, and obtain, under a certain condition on the signature of the isogeny, such a uniform bound by explicitly constructing nonzero integers that $p$ must divide. As a corollary we find a uniform bound on torsion points defined over unramified extensions of the base field, generalising Merel's Uniform Boundedness result for torsion.
\end{abstract}

\maketitle
 
\section{Introduction}

Let $E$ be an elliptic curve over the rational numbers $\Q$. Mazur's isogeny theorem \cite[Theorem 1]{mazur1978rational} established that, if $E$ admits a $\Q$-rational isogeny of prime degree $p$, then $p$ must be one of the following $12$ prime numbers:
\[ \left\{2, 3, 5, 7, 11, 13, 17, 19, 37, 43, 67, 163\right\}.\]

When considering generalisations of this theorem to more general number fields $k$, 
one typically studies the analogous set of \textbf{isogeny primes}\minor{1}{
\[ \IsogPrimeDeg(k) := \left\{ \begin{aligned} p \text{ prime} : &\exists\, \phi : E \to E' \text{ a $k$-isogeny between} \\ &\text{elliptic curves over $k$ of degree } p\end{aligned} \right\}. \]
}
This set will be infinite if $k$ contains the Hilbert class field of an imaginary quadratic field $L$. Indeed, in this case there exist elliptic curves over $k$ with complex multiplications rational over the base, and so any prime that splits in $L$ will necessarily be in $\IsogPrimeDeg(k)$. Under the Generalised Riemann Hypothesis (GRH), it is known \cite[Theorem 2]{larson_vaintrob_2014} that \minor{2}{the only way for $\IsogPrimeDeg(k)$ to be infinite is in this case that $k$ contains the Hilbert class field of an imaginary quadratic field}. Thus, in order to have any hope of a finiteness result for isogeny primes in general, one is required to consider only those isogeny primes which do not arise from complex multiplications defined over the base field in the case that $k$ contains the Hilbert class field of an imaginary quadratic field. Note however that this still includes isogeny primes that can be constructed from complex multiplications defined over larger fields, which explains how the primes $43$, $67$ and $163$ arise in Mazur's list; see the Introduction to \cite{mazur1978rational} for more on how these primes arise.

One may then formulate two questions as generalisations of Mazur's isogeny theorem.

\begin{question}\label{q:main}
\begin{enumerate}
    \item \label{q:uniformity} \textbf{Uniform boundedness.} For a fixed number field $k$, is the set \minor{1}{
    \[ \IsogPrimeDegNonCM(k) := \left\{ \begin{aligned} p \text{ prime} : &\exists\, \phi : E \to E' \text{ a $k$-isogeny between non-CM} \\ &\text{elliptic curves over $k$ of degree } p\end{aligned} \right\}. \]
    }
    of primes $p$ which arise as the degree of a non-CM $k$-rational isogeny as one varies over all elliptic curves over $k$ a finite set? If so, can we determine this finite set?
    \item \label{q:strong_uniformity} \textbf{Strong Uniform boundedness.} For a fixed positive integer $d$, is the set\minor{1}{
    \[
    \IsogPrimeDegNonCMUniform(d) :=
    \bigcup_{\substack{k \\ [k:\Q] = d}} \IsogPrimeDegNonCM(k)
    \]
    }
    of primes $p$ which arise as the degree of a non-CM $k$-rational isogeny as one varies over all elliptic curves over \emph{all number fields $k$ of degree} $d$ a finite set? If so, can we determine this finite set?
\end{enumerate}

\end{question}

The second question is motivated by the analogous question for torsion points instead of isogenies. This analogous question was known as the Strong Uniform Boundedness Conjecture for torsion and is now a theorem due to Merel \cite[Theorem 2]{merel1996bornes}.

The previous discussion shows that \Cref{q:main}~\ref{q:uniformity} admits, conditional on GRH, a positive answer in the case that $k$ does not contain the Hilbert class field of an imaginary quadratic field. For a general $k$, \Cref{q:main}~\ref{q:uniformity} remains open, although it is a conjecture of Momose \cite{momose1995isogenies} that \Cref{q:main}~\ref{q:uniformity} has a positive answer for all $k$. Indeed, Momose's main theorem in \emph{loc. cit.} (Theorem A) proved the existence of a finite set $C_k$ of primes, outside of which any $k$-rational $p$-isogeny for any elliptic curve over $k$ must be one of three Types, hereafter referred to as Momose Types 1, 2 and 3. (See \Cref{sec:background} or \cite[Def 3.15, 3.16, 3.20]{banwait2022derickx} for a precise definition of Momose types.) That there are only finitely many primes occurring as the degree of a $k$-rational isogeny of Momose Type 1 was proved by Momose (Theorem B) for number fields of degree up to $12$, with the condition on the degree being removed after Merel's resolution of the uniform boundedness conjecture for torsion on elliptic curves over number fields. Similarly, that there can be only finitely many isogeny primes of Momose Type 2 was proved by Momose for $k$ a quadratic field (Theorem C), and for all $k$ conditional upon GRH (Remark 8). The isogenies of Momose Type 3 can only arise in the case that $k$ contains the Hilbert class field of an imaginary quadratic field; all CM isogenies fall into this Type, and the content of Momose's conjecture is that there can be at most finitely many primes occurring as the degree of a non-CM isogeny of Momose Type 3. (\minor{3}This conjecture is expressed both in the Introduction, as well as the first sentence of Section 5, of \cite{momose1995isogenies}.) As Momose remarks in the introduction of \emph{loc. cit.}, this is open even for $k$ a quadratic field (see Momose's Theorem D in Section 5 for partial results in this direction). We also remark that the question of strong uniformity is open for every $d > 1$.

The previous work of the authors \cite{banwait2022derickx} studied the Uniformity question, and developed an algorithm (Algorithm 8.1 of \emph{loc. cit.}) conditional on GRH that, on input any number field $k$, outputs a finite set of primes which \emph{contains} the isogeny primes that are not of Momose Type 3 (see Theorem 1.7 of \emph{loc. cit.}). This algorithm built upon a previous algorithm of the first author \cite{banwait2021explicit}, which was restricted to the case of $k$ a quadratic field, and the authors used this more general algorithm to provide the first instances of the exact determination of $\IsogPrimeDeg(k)$ for $k$ a cubic field \cite[Theorem 1.8]{banwait2022derickx}.

In the present paper we study the more difficult question of Strong Uniformity\footnote{our previous work made extensive use of auxiliary prime ideals $\fq$ and their order in the class group of a given number field; this is the principal obstacle to our previous work being readily extendible to the strong uniformity question.}. In \Cref{sec:background} we recall the notion of the \textbf{signature} $\eps$ of an isogeny, which is an element of $\left\{0,4,6,8,12\right\}^d$. \main{2} In its subsection \ref{sec:signature_interpretation} we also give a conjectural description of how to interpret these signatures in terms of the geometry of modular curves. These signatures are divided into four classes: the generic signatures, and the signatures of Types 1, 2 and 3 (see \Cref{def:sig_types}). In particular, if an isogeny has Momose Type $n$ (for $n = 1,2,3$), then it also has a signature of Type $n$ (see \cite[Section 3.4]{banwait2022derickx}). The problem therefore reduces to finding, for each of the $5^d$ possible values of $\eps$, a nonzero integer $B_{\eps}$ such that, if there exists an elliptic curve over a degree $d$ number field $k$ admitting a $k$-rational $p$-isogeny with signature $\eps$, then $p | B_{\eps}$. While we are not able to do this for \emph{all} possible signatures, we are able to do this under a condition on the \textbf{trace} of $\eps$, defined as the sum of the integers in the tuple; as explained more precisely below, most signatures satisfy this trace condition. In \Cref{sec:divisibility_conditions} we describe our algorithm to compute, for a given auxiliary prime $q$, an integer $B_{\eps,q}$ which is a multiplicative bound for prime degrees of $k$-rational isogenies of signature $\eps$. If this integer is nonzero, then we obtain a meaningful bound on the possible prime-degree isogenies as desired. We also treat one case where this integer may be zero: we show that either $p$ divides a related, necessarily nonzero integer, or $p$ must split in a certain imaginary quadratic field. This is summarised in the following result, which is the main result of the paper.

\begin{theorem}\label{thm:main}
Let $k$ be a number field of degree $d$, and $E/k$ an elliptic curve admitting a $k$-rational $p$-isogeny of signature $\eps$ for $p$ prime. Write $\trace \eps$ for the sum of the integers in $\eps$. For a given prime $q$, let $B_{\eps,q}$ and $B^\ast_{\eps,q}$ be the integers output by \Cref{alg:b_ints}. (In particular, $B^\ast_{\eps,q}$ is always nonzero, and if $B_{\eps,q} \neq 0$, then $B_{\eps,q} = B^\ast_{\eps,q}$.)

\begin{enumerate}[label=(\alph*)]
    \item If $\trace \eps \nequiv 0 \Mod{6}$, then for all primes $q$, we have $B_{\eps,q} \neq 0$, $p | B_{\eps,q}$, and \[ p \leq (2^{\trace \eps} + 2^{12d})^{2^d}. \]
    \item If $\eps$ is of Type $1$ (i.e. $\trace \eps = 0$ or $12d$), then $p$ divides the nonzero and explicitly computable integer $B_{\eps,q}^{\ast\ast}$ from \Cref{prop:uniform_type_1_bound}, and in particular \[p \leq (3^{12d}+1)^{2^d}.\]
    \item If $\trace \eps \equiv 6 \Mod{12}$, then let $q \geq 5$ be a prime. Then:
    
    \begin{enumerate}[label=(\roman*)]
        \item $B_{\eps,q} = 0$.
        \item Either $p$ divides the nonzero integer $B^\ast_{\eps,q}$, or $p$ splits in $\Q(\sqrt{-q})$.
    \end{enumerate}
\end{enumerate}
\end{theorem}

\begin{remark}
    \begin{enumerate}
        \item The above theorem deals with isogeny signatures based on $\trace \eps$. This trace is always an integer between $0$ and $12d$. So the traces one still needs to deal with after the Theorem are the integers strictly between $0$ and $12d$ that are congruent to $0 \Mod 6$.
        \item Note that in case $c)$ of the theorem we do not get an absolute bound on $p$, rather just some criteria that $p$ needs to satisfy. In \Cref{appendix}, we show how to obtain an additive bound on $p$ in case (c) of the above theorem that is conditional on GRH. Thus, assuming GRH, the only remaining signature cases are those for which $\trace \eps \equiv 0 \Mod{12}$ and  $\trace \eps \neq 0, 12d$.
        \item In \Cref{sec:how_many_left} we explain how the space of $5^d$ signatures admits natural permutations, which in particular imply that one need only identify nonzero integers $B_{\eps}$ for a restricted set of signatures $\eps$.
    \end{enumerate}
\end{remark}

For $d = 2$, this reduces to $9$ signatures that need to be dealt with (see \Cref{tab:isogeny_merel_quadratic}). The above theorem deals with all but $4$ of these: $(0,6)$, $(0,12)$, $(4,8)$, and $(6,6)$ (assuming GRH: all but $3$). For $d = 3$ one is reduced to $23$ signatures (see \Cref{tab:isogeny_merel_cubic}), and the above theorem deals with all but $9$ (assuming GRH: all but $5$) of these. This analysis is done more generally in \Cref{sec:how_many_left}, and in this sense the above theorem deals with the majority of signatures as stated earlier. It is a natural question to wonder whether ``most signatures'' translates to ``most isogenies''; i.e. to study how the possible signatures are distributed among isogenies. There is currently no data available for this, and this remains a question worthy of further investigation. 

For $d = 2$, by computing the $B_{\eps,q}$ integers and taking appropriate $\gcd$s, we obtain small sets of isogeny primes for each of the $6$ signatures we can deal with multiplicatively. Moreover, in \Cref{sec:type_1_weeding} we do a more precise study of the isogeny primes of signature $(0,0)$. In this case we can fully determine the exact list of isogeny primes. This is done by using results explicitly parameterising quadratic points on $X_0(p)$ for various small values of $p$. The following will be proven in \Cref{sec:type_1_weeding}.

\begin{theorem}\label{thm:type_1_exact_list}
    There exists an elliptic curve over a quadratic field $K$ admitting a $K$-rational $p$-isogeny of signature $(0,0)$, for $p$ prime, if and only if $p$ is in the following set: \[ \left\{2, 3, 5, 7, 11, 13, 17, 19, 37, 43, 73\right\}.\]
\end{theorem}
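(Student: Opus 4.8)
=== BEGIN PROOF PROPOSAL ===

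\textbf{Setup and strategy.} We wish to determine exactly those primes $p$ for which some elliptic curve over some quadratic field $K$ admits a $K$-rational $p$-isogeny of signature $(0,0)$. Such an isogeny corresponds to a quadratic point on the modular curve $X_0(p)$ whose associated mod-$p$ isogeny character has signature $(0,0)$; in particular these points are not CM points arising from the rational structure. The plan is to use the already-established multiplicative bound for Type~$1$ signatures: since $(0,0)$ is a Type~$1$ signature, \Cref{prop:uniform_type_1_bound} (together with part (b) of \Cref{thm:main} for $d=2$) produces a nonzero explicitly computable integer divisible by any such $p$, and hence a finite, explicit list of candidate primes. The task then reduces to, for each candidate prime in that list, deciding whether a quadratic point of signature $(0,0)$ actually exists on $X_0(p)$.

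\textbf{Weeding out candidates.} First I would compute the candidate list by evaluating the Type~$1$ multiplicative bound at $d = 2$ and a couple of auxiliary primes $q$, then taking gcd's, exactly as the paper does for the other signatures in \Cref{sec:how_many_left}; this should cut the candidates down to a modest finite set. For the small primes $p \in \{2,3,5,7,11,13\}$, the curve $X_0(p)$ has genus $0$ or $1$ of positive rank, so quadratic points are abundant, and one checks directly (e.g.\ via explicit $j$-line models, or base-changing rational points) that one can realise an isogeny of signature $(0,0)$; these all belong in the list. For each remaining candidate prime $p$, I would invoke the existing literature explicitly parameterising quadratic points on $X_0(p)$: for $p \in \{37,43,73\}$ one uses the known descriptions (Box, Najman--Vukorepa, and related work) of the quadratic points on these curves, identifies which of the finitely many quadratic points (or one-parameter families coming from hyperelliptic/bielliptic structure) give rise to signature $(0,0)$ isogenies, and confirms at least one does. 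For the primes that must be excluded (such as $17$, $19$, and any others surviving the gcd step), one checks that although quadratic points on $X_0(p)$ exist, none of them has the isogeny character of signature $(0,0)$ — typically because the only quadratic points are CM points, or points whose isogeny character has a different (nonzero) signature component; this is where the signature bookkeeping from \Cref{sec:background} is applied pointwise to the known quadratic points.

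\textbf{The signature computation at each point.} The technical heart is, given a concrete quadratic point $x \in X_0(p)(K)$ corresponding to an elliptic curve $E/K$ with a $p$-isogeny, to compute the signature of that isogeny. This means analysing the mod-$p$ character $\lambda \colon G_K \to \F_p^\times$ giving the action on the kernel, and at each of the (two) embeddings $K \hookrightarrow \overline{\Q}$ determining the exponent in $\{0,4,6,8,12\}$ governing the local behaviour at primes above $p$, via the theory of the isogeny character's restriction to inertia (Momose, and the recollections in \Cref{sec:background}). For points coming in families one does this generically; for sporadic points one does it case by case. I expect the main obstacle to be precisely this step for the borderline primes where the relevant quadratic points are sporadic and not obviously CM: one must either produce explicit equations for $E$ and compute the character, or argue via the known structure of $X_0(p)$ and its Atkin--Lehner quotients that no signature-$(0,0)$ point can occur. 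Verifying that the resulting set is exactly $\{2,3,5,7,11,13,17,19,37,43,73\}$ — in particular that $73$ is present (a genuinely new phenomenon over quadratic fields, not in Mazur's rational list) and that, say, $23$ or $29$ are absent — will require care with these pointwise signature computations, but no new ideas beyond the explicit quadratic-points results cited and the signature formalism already set up.

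=== END PROOF PROPOSAL ===
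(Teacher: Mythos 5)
Your overall architecture — reduce to a finite candidate list via the Type~1 multiplicative bound of \Cref{prop:uniform_type_1_bound}, then settle each surviving candidate using explicit knowledge of quadratic points on $X_0(p)$ together with pointwise signature computations — is exactly the paper's. But there is a concrete error in the execution: you place $17$ and $19$ among ``the primes that must be excluded'', whereas both lie in the set in the statement and must be \emph{realised}. Your proposed exclusion method for them (enumerate the quadratic points on $X_0(p)$ and check that none has signature $(0,0)$) would fail twice over: it is not even a finite computation, since $X_0(17)$ and $X_0(19)$ are elliptic curves with infinitely many quadratic points, and the answer is the opposite of what you assert. The paper instead searches quadratic points on the elliptic models of these curves and exhibits explicit curves $E_{17}/\Q(\sqrt{-648810})$ and $E_{19}/\Q(\sqrt{-544752})$ whose isogenies (after dualising, since the points found have signature $(12,12)$) have signature $(0,0)$.

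The second gap is the mechanism by which infinitely many quadratic points are discarded and by which the one candidate that genuinely must be excluded ($p=109$, which survives the preliminary sieve) is eliminated. The essential tool is \Cref{lem:atkin_lehner_signature}: if $w_p$ sends a quadratic point to its Galois conjugate — equivalently, if the point is the pullback of a rational point on $X_0(p)^+$ — then its signature has the form $(a,12-a)$ and so is never $(0,0)$. This is what cuts the infinitely many quadratic points on $X_0(43)$ and $X_0(73)$ (both of which lie over positive-rank quotients $X_0(p)^+$) down to the finitely many exceptional points in Box's tables, one of which is then shown to have signature $(0,0)$ by direct computation for $43$ and by the inertness/semistability argument of \Cref{lem:00_sig} for $73$; and it is what rules out $109$ outright, since by the recent determination of the quadratic points on $X_0(109)$ every noncuspidal one is exchanged with its conjugate by $w_{109}$. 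You allude to ``arguing via Atkin--Lehner quotients'', but without isolating this lemma the case analysis does not close, and without addressing $109$ (or some equivalent candidate surviving your gcd step) the ``only if'' direction is incomplete.
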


This result may be considered a generalisation of Kamienny's uniform boundedness of prime order torsion points on elliptic curves over quadratic fields \cite{kamienny1992torsion}, which proved that only the primes $\left\{2,3,5,7,11,13\right\}$ arise as the degree of rational torsion points on such elliptic curves.

\begin{table}[htp]
\begin{center}
\begin{tabular}{|c|c|}
\hline
Signature & Possible Isogeny Primes $\geq 13$\\
\hline
$(0,0)$ & $13, 17, 19, 37, 43, 73$\\
$(0,4)$ & $17, 23, 29, 41, 53$\\
$(0,8)$ & $17, 23, 29, 41$\\
$(4,4)$ & $17, 23, 29, 41$\\
$(4,6)$ & $17, 23$\\
$(0,6)$ & Primes $\leq 1.01 \times 10^{864}$ (GRH)\\
$(0,12)$ & ? (contains 23)\\
$(4,8)$ & ? (contains 29)\\
$(6,6)$ & ? (contains 23)\\ 
\hline
\end{tabular}

\vspace{0.3cm}
\caption{\label{tab:isogeny_merel_quadratic}
Status of strong uniform boundedness of isogeny primes of elliptic curves over quadratic fields. For every isogeny of an elliptic curve over a quadratic field, either its dual or its Galois conjugate must have one of the $9$ signatures in the left-most column. The `Possible Isogeny Primes $\geq 13$' column then lists the primes $\geq 13$ that could possibly (but may in fact not) occur as the degree of such an isogeny. This list of primes for signature $(0,0)$ is exact; i.e., they are in fact all isogeny primes. The uniform boundedness for signature $(0,6)$ is only known under the assumption of GRH, and the bound there is likely far from optimal; we expect the bound here to be closer to $10^2$ than $10^{864}$. The signatures $(0,12)$, $(4,8)$ and $(6,6)$ cannot be dealt with by our methods and are the remaining obstacles to obtaining strong uniform boundedness for isogeny primes over quadratic fields. From \Cref{ex:iso_special} we do know that the prime 23 (respectively 29) must arise in the list for signature $(0,12)$ (respectively $(4,8)$). From \cite[Example 3.6]{banwait2022derickx} we know that $23$ must arise in the list for signature $(6,6)$. Note that there are infinitely many primes with signature $(0,12)$ due to CM isogenies, and the question here is whether there is a uniform bound on the non-CM isogenies of signature $(0,12)$. }
\end{center}
\end{table}

A corollary of \Cref{thm:main}(b) is the following bound on torsion primes in unramified extensions, which may be of independent interest. This corollary will be proved in \Cref{sec:divisibility_conditions}.

\begin{corollary}\label{cor:main}
Let $k$ be a number field of degree $d$, and $E/k$ an elliptic curve that obtains a rational torsion point of order $p$ over an extension $L$ of $k$ unramified at all primes dividing $p$. Then $p$ divides a nonzero and explicitly computable integer depending only on $d$ (see \Cref{prop:uniform_type_1_bound}), and \[p \leq (3^{12d}+1)^{2^d}.\]
\end{corollary}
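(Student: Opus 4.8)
The result is essentially a corollary of \Cref{thm:main}(b): the plan is to show that an elliptic curve over $k$ acquiring a point of order $p$ over an extension unramified above $p$ must carry a $k$-rational $p$-isogeny of Type $1$ signature, once $p$ exceeds an explicit small threshold (primes below the threshold being trivially smaller than the claimed bound). So let $k'/k$ be an extension, unramified at every prime of $k$ above $p$, such that $E(k')$ contains a point $P$ of order $p$. First I would replace $k'$ by its Galois closure over $k$: this changes nothing, since subextensions and composita of extensions unramified at a given prime are again unramified there, and the closure only enlarges $E(k')$. Assuming then that $k'/k$ is finite Galois and unramified above $p$, I would let $H \subseteq E[p]$ be the $\F_p[G_k]$-submodule generated by $P$; since $k'/k$ is Galois we have $\sigma(P) \in E(k')$ for all $\sigma \in G_k$, so $H \subseteq E(k')$, and as $H$ is a nonzero $G_k$-stable submodule of the two-dimensional $\F_p$-space $E[p]$ it is either a line or all of $E[p]$. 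I would then treat these two cases separately.

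If $H = E[p]$, then $G_{k'}$ acts trivially on $E[p]$, so by the Galois-equivariance and surjectivity of the Weil pairing $\mu_p \subseteq k'$, i.e.\ $\Q(\mu_p) \subseteq k'$. Picking a prime $\fP$ of $k'$ above $p$ and setting $\fp = \fP \cap k$, one has $e(\fP/p) \ge p-1$ (coming from the totally ramified extension $\Q(\mu_p)/\Q$), while $k'/k$ unramified at $\fp$ gives $e(\fP/p) = e(\fp/p) \le [k:\Q] = d$; hence $p \le d+1$, which is far below the asserted bound and for which $p$ trivially divides an explicit integer depending only on $d$.

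If instead $H = C = \langle P\rangle$ is a line, then $E \to E/C$ is a $k$-rational isogeny of degree $p$ whose isogeny character $\lambda\colon G_k \to \F_p^\times$ is trivial on $G_{k'}$, as $G_{k'}$ fixes $P$. For a prime $\fp \mid p$ of $k$ and $\fP \mid \fp$ of $k'$, the hypothesis that $k'/k$ is unramified at $\fp$ implies the inertia subgroup $I_\fp \subseteq G_k$ coincides with the inertia at $\fP$ and hence lies in $G_{k'}$, so $\lambda|_{I_\fp}$ is trivial. Recalling from \Cref{sec:background} how the signature component $\eps_\fp$ is extracted from the restriction of $\lambda^{12}$ to $I_\fp$, and that the mod-$p$ cyclotomic character restricted to $I_\fp$ has order at least $(p-1)/e(\fp/p) \ge (p-1)/d$, triviality of $\lambda|_{I_\fp}$ forces $\eps_\fp = 0$ as soon as $p > 12d+1$. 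Thus for $p$ above this threshold the signature is $(0,\dots,0)$, of Type $1$, and \Cref{thm:main}(b), via \Cref{prop:uniform_type_1_bound}, supplies both the divisibility by a nonzero explicit integer depending only on $d$ and the bound $p \le \max(65(2d)^6,(3^{12d}+1)^{2^d})$; the finitely many $p \le 12d+1$ are dispatched trivially.

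I expect the only genuinely delicate point — and hence the main obstacle — to be this last inertia chase together with the passage ``$\lambda|_{I_\fp}$ unramified $\Rightarrow \eps_\fp = 0$'': one must keep track that $I_\fp$ is the inertia of the possibly $p$-ramified extension $k/\Q$ rather than of an unramified one, and one must handle the ``$12$'' appearing in the definition of the signature, which is harmless precisely because $\chi_p|_{I_\fp}$ has order exceeding $12$ for $p$ large, and it is exactly this that necessitates treating the small primes separately. Everything else is bookkeeping layered on top of \Cref{thm:main}.
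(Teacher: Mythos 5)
Your proposal is correct and follows essentially the same route as the paper's proof: pass to the Galois closure, split on whether $\langle P\rangle$ is $G_k$-stable, use the Weil pairing plus the ramification of $p$ in $\Q(\zeta_p)$ to dispatch the case $E[p]\subseteq E(k')$, and otherwise obtain a $k$-rational $p$-isogeny of signature $(0,\dots,0)$ to which \Cref{prop:uniform_type_1_bound} applies. Your inertia computation justifying $\eps=(0,\dots,0)$ (with the threshold $p>12d+1$ and separate treatment of small primes) is merely a more explicit version of the step the paper asserts directly.
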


This result may be considered a generalisation of Merel's uniform boundedness of prime order torsion points on elliptic curves over degree $d$ number fields, since by taking the extension $L$ to be just $k$ itself one recovers Merel's result. Having bounded the torsion primes as above, one may now apply a standard argument of Kamienny and Mazur to bound the not-necessarily-prime torsion orders arising in such unramified extensions for the restricted class of non-CM elliptic curves.

\begin{corollary}\label{cor:main_composite}
For every integer $d$ there exists a constant $A_d$ such that, if $k$ is a number field of degree $d$ and $E/k$ is a non-CM elliptic curve that obtains a rational torsion point of order $N$ over an extension of $k$ unramified at all primes dividing $N$, then $N \leq A_d$.
\end{corollary}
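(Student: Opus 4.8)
The plan is to reduce Corollary~\ref{cor:main_composite} to the prime-degree result via the classical bootstrapping argument of Kamienny and Mazur, which turns a bound on prime torsion into a bound on all torsion.

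First I would apply Corollary~\ref{cor:main} prime by prime: a point of order $N$ on $E(k')$ yields one of order $\ell$ for every prime $\ell \mid N$, and since $k'/k$ is unramified at all primes dividing $N$, in particular at those dividing $\ell$, Corollary~\ref{cor:main} gives $\ell \le \max(65(2d)^6,(3^{12d}+1)^{2^d}) =: P_d$. Hence the prime divisors of $N$ lie in the fixed finite set $S_d = \{\ell \le P_d\}$, and it suffices to bound $\nu_\ell(N)$ for each $\ell \in S_d$ by a constant $c(d,\ell)$; one then sets $A_d := \prod_{\ell \in S_d}\ell^{c(d,\ell)}$.

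For the exponent bound, suppose $Q \in E(k')$ has order $\ell^n$ with $k'/k$ unramified at $\ell$. The unramifiedness is exactly the statement that each inertia group $I_{\fp}$, $\fp \mid \ell$ in $k$, fixes $Q$; equivalently $Q \in E(k_{\fp}^{\mathrm{ur}})$ for every $\fp \mid \ell$. Running the Kamienny–Mazur descent — repeatedly replacing $(E,Q)$ by $(E/C,\ Q \bmod C)$ for $C$ the order-$\ell$ subgroup of $\langle Q \rangle$ — produces a non-backtracking chain of $\ell$-isogenies $E = E_0 \to E_1 \to \cdots \to E_n$ over $k'$, with every $E_j$ non-CM and carrying a point of order $\ell^{n-j}$ lying in $E_j(k_{\fp}^{\mathrm{ur}})$. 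One then works locally at a prime $\fp \mid \ell$ and splits on the reduction type of $E$ there. At a prime of potentially multiplicative reduction, the Tate uniformisation shows that a point of large $\ell$-power order requires $\ell$-power ramification in the extension — which $k'/k$ does not provide — so $n$ is bounded in terms of $d$ and $\ell$. At a prime of potentially good reduction, one passes to a finite extension of bounded degree over which $E$ acquires good reduction, notes that $Q$ remains rational over its maximal unramified subextension, and examines the connected–étale sequence of $E[\ell^n]$: the condition that inertia fixes $Q$ forces $Q$ to lie all but boundedly in the étale quotient, so that its order is governed by the unit Frobenius eigenvalue of the reduction; here the \emph{non-CM} hypothesis is precisely what rules out the exceptional configurations — realised by CM curves via ring-class-field torsion — for which $n$ would be unbounded. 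Combined with Merel's uniform bound over number fields of bounded degree and the Type~$1$ bound of Proposition~\ref{prop:uniform_type_1_bound} (the unramified-at-$\ell$ hypothesis places us in Momose's Type~$1$), this yields $\nu_\ell(N) \le c(d,\ell)$.

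The hard part is the potentially-good-reduction case at $\ell$: because $k'/k$ is only required to be unramified at $\ell$, the residue field of $k'$ above $\ell$ may be arbitrarily large, so the reduction $\widetilde{E}$ can itself carry arbitrarily much $\ell$-power torsion and the naive injection of torsion into the reduction is useless. Pushing through this is exactly where the non-CM assumption is indispensable, and there — following Kamienny and Mazur — the argument passes through a finiteness statement that need not be effective; this is why $A_d$ is only asserted to exist, in contrast to the explicit bounds of Corollary~\ref{cor:main}.
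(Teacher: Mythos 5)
Your first reduction --- apply \Cref{cor:main} to each prime $\ell \mid N$ to confine the prime divisors of $N$ to a finite set depending only on $d$, then bound $\nu_\ell(N)$ for each such $\ell$ --- is exactly how the paper begins. But the core of the argument, the exponent bound, has a genuine gap, and it is precisely the one you flag yourself. A local analysis at a prime $\fp \mid \ell$ cannot bound $\nu_\ell(N)$: for a curve with good ordinary reduction at $\fp$, the \'etale quotient of $E[\ell^\infty]$ is rational over unramified extensions of $k_\fp$ of every degree, so the hypothesis ``unramified at primes above $\ell$'' places \emph{no} local constraint on the order of $Q$ beyond its image in the connected part. This is true for non-CM curves just as for CM curves, so the non-CM hypothesis does not rescue the connected--\'etale argument; your appeal to ``a finiteness statement that need not be effective, following Kamienny and Mazur'' is naming the missing ingredient rather than supplying it. A secondary problem is that your isogeny chain $E_0 \to \cdots \to E_n$ is constructed over $k'$, whose degree is unbounded, so no finiteness theorem for degree-$d$ fields could be applied to it even if you had one in hand.

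What actually closes the gap in the paper is global, not local. First, unramifiedness plus the Weil pairing shows that if $E[\ell^g] \subseteq E(k')$ then $\zeta_{\ell^g} \in k'$, forcing $(\ell-1)\ell^{g-1} \leq d$; so the ``full level'' part of the torsion is bounded by $g \leq \log_\ell(d)+1$ (\Cref{lem:isogeny_from_torsion}). The point of order $\ell^e$ together with this bound then descends to a \emph{$k$-rational} cyclic $\ell^{e-g}$-isogeny, i.e.\ a degree-$d$ point on $X_0(\ell^{e-g})$. One then bounds $h = e-g$ by combining Abramovich's lower bound showing $\gon(X_0(\ell^h)) \to \infty$, Frey's theorem giving finiteness of degree-$d$ points once the gonality exceeds $2d$, and Serre's open image theorem applied to the finitely many remaining non-CM curves to cap their cyclic $\ell$-power isogeny degrees. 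This last step is where the non-CM hypothesis and the ineffectivity of $A_d$ genuinely enter. None of these ingredients (the descent to a $k$-rational cyclic isogeny, $X_0(\ell^h)$, gonality growth, Frey, Serre) appear in your proposal, and without them the potentially-good-reduction case does not close.
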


The implementations of the algorithms in this paper have been added to the aforementioned \emph{Isogeny Primes} package, whose latest release (version 2) may be found here:

\begin{center}
\url{https://github.com/isogeny-primes/isogeny-primes}
\end{center}

All filenames given in the paper will refer to files in this repository \cite{isogeny_primes}. The \path{README.md} there explains how to run our program to obtain the lists of primes as in \Cref{tab:isogeny_merel_quadratic}.

\subsection*{Connection with other work in the area}

The situation for isogeny primes over quadratic fields (summarised in \Cref{tab:isogeny_merel_quadratic}) may be compared with \cite[Theorem 1.2]{michaud2022elliptic}, where the author obtains unconditional bounds on possible isogeny primes under various restrictions on the quadratic field (namely, real quadratic fields with exponent of class group at most $2$) and the elliptic curves (namely, those semistable at all primes above the isogeny prime). \main{6} The semistability assumption simplifies the situation considerably, since here there are (up to the Galois action on signatures) only $(0,0)$ and $(0,12)$ to consider. See also remark 3.3 in \emph{loc. cit.} for further details.

The doubly exponential bound we obtain on torsion primes in unramified extensions (\Cref{cor:main}) may be compared with \cite[Theorem 1.2]{lozano2016ramification}, who obtains a linear bound that also works for prime powers, albeit at the cost of restricting the elliptic curves considered to those that have potential supersingular reduction at a prime above $p$.

We note that there is another category of results often called \emph{isogeny theorems} in the literature. These concern the question of bounding the degree of an isogeny between pairs of elliptic curves (or more generally, abelian varieties) that one already knows to be isogenous. The first such result goes back to the seminal work of Masser and W\"{u}stholz \cite{masser1993isogeny}, with a more explicit, unconditional version due to Gaudron and R\'{e}mond \cite{gaudron2014polarisations}, which gives a bound involving the semi-stable Faltings height $h_\mathcal{ F}$ of $E$. We refer to \cite[Section 4]{bilu2013rational} for specifics on the Faltings height; for our purposes, it is only important that it is some real number that depends on $E$.

\begin{theorem}[Gaudron--R\'{e}mond, 2014]
    Let $E$ be an elliptic curve defined over a number field $K$ of degree $d$. Let $E'$ be another elliptic curve, defined over $K$ and isogenous to $E$ over $\overline{K}$. Then there exists an isogeny $\psi : E \to E'$ of degree at most $10^7d^2\left(\max\left\{h_{\mathcal{F}}(E),985\right\}+4\log d\right)^2$.
\end{theorem}

This result has the following corollary, to which our results are immediately comparable.

\begin{corollary}[{Bilu--Parent--Rebolledo \cite[Proposition 4.3]{bilu2013rational}, 2013}]
    Let $E$ be a non-CM elliptic curve defined over a number field $K$ of degree $d$ and admitting a cyclic isogeny over $K$ of degree $\delta$. Then $\delta \leq 10^7d^2\left(\max\left\{h_{\mathcal{F}}(E),985\right\}+4\log d\right)^2$.
\end{corollary}

Similar to our main theorem, the only way the above corollary depends on the number field is in terms of its degree. However, a significant difference from our main result is that our results do not depend on the semi-stable Faltings height of the elliptic curve. Indeed, part (2) of \Cref{q:main} essentially asks whether a dependence on the height is actually necessary to obtain an isogeny theorem for non-CM elliptic curves. In this light, our work can be seen as a first step in the direction of the rather ambitious goal of removing the dependence on the height from the above isogeny theorem.

Related to this, one may see \cite[Theorem 1.1]{parent2018heights}, which under the assumption of Brumer's conjecture provides a bound on the height of the $j$-invariant of quadratic points of $X_0(p)$ of the form $O(p^5\log p)$ not coming from $X_0^+(p)(\Q)$. This bound is independent of the particular quadratic field. Note that our results would imply a $O(1)$ height bound for quadratic points on $X_0(p)$ with certain isogeny signatures, as there simply are no quadratic points with these signatures for $p \gg 0$.

We also note that Gaudron and R\'{e}mond have recently improved upon their work on isogenies \cite{gaudron2023nouveaux}, so the above bound can likely be sharpened using the most updated version of their work.

\subsection*{Layout of the paper}

In \Cref{sec:background} we briefly recall some background material on isogeny characters, signatures, Frobenius action on cusps, and formal immersions. \main{2}In \Cref{sec:divisibility_conditions} we give a conjectural description on how to interpret isogeny signatures in terms of the geometry of modular curves. We provide a divisibility condition in \Cref{sec:most_general_divisibility} that improves upon our previous work, and in \Cref{sec:divisibility_conditions} use this condition to construct the integers $B_{\eps,q}$ and then to prove \Cref{thm:main}, \Cref{cor:main} and \Cref{cor:main_composite}. \Cref{thm:type_1_exact_list} is proved in \Cref{sec:type_1_weeding}. In \Cref{sec:how_many_left} we explain why \Cref{thm:main} treats ``most'' signature types, give some results of running our implementation for small values of $d$, and in degrees $2$ and $3$ explicitly identify the remaining signatures to be resolved in order to have full resolution of the Strong Uniformity question for these degrees.

As mentioned after the statement of \Cref{thm:main} above, one may obtain further results on bounding isogeny primes that are conditional upon GRH. To make it very clear what relies on GRH and what doesn't, we have collected these and other conditional results in \Cref{appendix}. In particular, we emphasise that the main body of the paper contains no result that depends on GRH.

\ack{
The first author was supported by the Simons Foundation grant \#550023 for the Collaboration on Arithmetic Geometry, Number Theory, and Computation. The second author is supported by the project “Implementation of cutting-edge research
and its application as part of the Scientific Center of Excellence for Quantum and Complex Systems, and Representations of Lie Algebras”, PK.1.1.10.0004, European Union,
European Regional Development Fund and by the Croatian Science Foundation under the
project no. IP-2022-10-5008.
 We are grateful to Pete L. Clark, Filip Najman, and Michael Stoll for comments on an earlier version of the manuscript. Additionally we would like to thank the referee for the very thorough review of the manuscript, leading to a much better exposition.
}

\section{Background}\label{sec:background}

We briefly recall the key ideas that we need in the sequel. More details may be found in \cite[Section 2]{banwait2021explicit} and \cite[Section 3]{banwait2022derickx}.

\subsection{Isogeny characters and signatures.}
Let $k$ be a number field of degree $d$ over $\Q$, and let $E$ be an elliptic curve over a number field $k$ admitting a $k$-rational isogeny of degree $p$. The $G_k := \Gal(\overline{k}/k)$-action on the kernel $W(\overline{k})$ of the isogeny gives rise to the \textbf{isogeny character}:
\[ \lambda : G_k \to \Aut(W(\overline{k})) \cong \F_p^\times. \]

The study of the isogeny character for the purpose of bounding isogenies goes back at least to Mazur \cite[Section 5]{mazur1978rational}, and was subsequently developed by Momose \cite{momose1995isogenies} and David \cite{david2008caractere}. The following description of $\lambda^{12}$ is a mild generalisation of a result of David (which in turn was a reformulation of a lemma of Momose) which appeared as Proposition 3.4 in \cite{banwait2022derickx}.

\begin{proposition}[Generalisation of Proposition 2.6 in \cite{david2012caractere}]\label{prop:momose_1_non_galois_md}
Let $k$ be a number field, $K$ its Galois closure and $\lambda$ a $p$-isogeny character over $k$. Then for every prime ideal $\fp_0$ lying above $p$ in $K$ there exists a formal sum $$\eps=\eps_{\fp_0}=\sum_{\sigma \in \Hom(k,K)}a_\sigma\sigma$$ with all $a_\sigma \in \left\{0,4,6,8,12\right\}$ such that for all $\alpha \in k^\times$ that are integral $\fp$-adic units at all primes $\fp$ above $p$, one has:
\[ \lambda^{12}((\alpha)) \equiv \alpha^\varepsilon \Mod{\fp_0}.\]
Furthermore if $p > 13 $ and $p$ is unramified in $k$, then for every $\fp_0$ there is a unique such signature $\eps_{\fp_0}$.
\end{proposition}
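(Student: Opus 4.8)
The plan is to reduce the assertion to a purely local computation at the primes of $k$ above $p$, feed in the known structure of the isogeny character on inertia there, and then bookkeep over the Galois closure $K$. The first thing to record is that $\mu=\lambda^{12}$ is unramified outside $p$: at a prime $\fq\nmid p$ of potentially good reduction the inertia group $I_\fq$ acts on $E[p]$ through a cyclic quotient of order dividing $12$, which the twelfth power kills, and at a prime of potentially multiplicative reduction $\lambda|_{I_\fq}$ is already unramified (the canonical subgroup $\mu_p$ of a Tate curve is unramified away from $p$). Hence $\mu((\alpha))$ is defined for $\alpha\in k^\times$ prime to $p$; regarding $\mu$ as a character of the idele class group, evaluating on the globally trivial principal idele of such an $\alpha$, and using that $\mu$ has finite order, one obtains
\[ \mu((\alpha)) \;=\; \prod_{\fp\mid p}\mu_\fp(\alpha)^{\pm1}\quad\text{in }\F_p^\times, \]
where $\mu_\fp\colon k_\fp^\times\to\F_p^\times$ is the local component and the sign is fixed once and for all by the normalisation of the reciprocity map. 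Since $\alpha$ is a $\fp$-unit for every $\fp\mid p$, only the restriction $\mu_\fp|_{\O_\fp^\times}$ matters, and it suffices to describe each of these.

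For that I would invoke the local analysis of Momose and David, which rests on Raynaud's classification of finite flat group schemes of order $p$: $\lambda|_{I_\fp}$ is tame, hence a monomial in the fundamental characters of level $f(\fp/p)$, and the reduction type of $E$ at $\fp$ bounds the exponents. Potentially good reduction contributes ``slopes'' $0$ and $1$ (ordinary), $\frac12$ (supersingular), or $\frac13$ and $\frac23$ (good reduction acquired only over a tamely ramified extension of degree $3$ or $6$), and potentially multiplicative reduction contributes $0$ and $1$; after the twelfth power one may therefore write $\mu_\fp(u)\equiv\prod_j\psi_j(u)^{c_{\fp,j}}$ on $\O_\fp^\times$ with each $c_{\fp,j}\in\{0,4,6,8,12\}$. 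As a consistency check, replacing $W$ by $E[p]/W$ (equivalently $E$ by $E/W$) replaces $\lambda$ by $\chi_p\lambda^{-1}$, hence each $c_{\fp,j}$ by $12-c_{\fp,j}$, and the set $\{0,4,6,8,12\}$ is stable under $c\mapsto 12-c$; this symmetry is also what makes the final formula insensitive both to the sign in the displayed identity and to the choice of $W$ versus its dual.

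Next I would assemble the global signature. Each of the $d$ embeddings $\sigma\colon k\hookrightarrow K$ induces a prime $\sigma^{-1}(\fp_0)$ of $k$ above $p$, and as $\sigma$ varies every prime $\fp\mid p$ occurs, each exactly $[k_\fp\colon\Q_p]$ times. For $\sigma$ inducing $\fp$, the map $\alpha\mapsto\sigma(\alpha)\bmod\fp_0$ factors through $\kappa(\fp)\hookrightarrow\kappa(\fp_0)$ and realises one of the $f(\fp/p)$ fundamental characters of level $f(\fp/p)$; assigning the exponent $c_{\fp,j}$ to one $\sigma$ in each Frobenius-conjugacy class over $\fp$ and $0$ to the remaining ones, and doing this for every $\fp\mid p$, produces a formal sum $\eps_{\fp_0}=\sum_\sigma a_\sigma\sigma$ with all $a_\sigma\in\{0,4,6,8,12\}$ for which $\mu((\alpha))\equiv\alpha^{\eps_{\fp_0}}\pmod{\fp_0}$, by the previous two steps. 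The one point that needs care — and this is really the content of generalising David's statement past the Galois case — is that the local analysis is performed over $k$ directly, where Galois-ness is irrelevant because the question is local, while $K$ enters only to supply a single residue field $\kappa(\fp_0)$ in which all the $\kappa(\fp)$, hence all the fundamental characters, appear simultaneously as reductions of embeddings of $k$. Changing $\fp_0$ permutes the fundamental characters among the $\sigma$, which is exactly why $\eps_{\fp_0}$ genuinely depends on $\fp_0$.

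Finally, for uniqueness, suppose $p>13$, that $p$ is unramified in $k$, and that $\eps$ and $\eps'$ both satisfy the conclusion; put $\delta_\sigma=a_\sigma-a'_\sigma\in\{-12,\dots,12\}$, so that $\prod_\sigma\sigma(\alpha)^{\delta_\sigma}\equiv1\pmod{\fp_0}$ for every $\alpha\in\O_k$ prime to $p$. Since $p$ is unramified, $\O_k/p\O_k\cong\prod_{\fp\mid p}\kappa(\fp)$ and the $f(\fp/p)$ embeddings over a fixed $\fp$ reduce mod $\fp_0$ to the distinct Frobenius conjugates $x\mapsto x^{p^j}$ of $\kappa(\fp)\hookrightarrow\kappa(\fp_0)$; choosing $\alpha$ by the Chinese remainder theorem with $\alpha\bmod\fp$ a generator of $\kappa(\fp)^\times$ and $\alpha\equiv1$ at the other primes above $p$ forces $(p^{f(\fp/p)}-1)\mid\sum_j\delta_{\sigma_j}p^{j}$, and since $\bigl|\sum_j\delta_{\sigma_j}p^{j}\bigr|\le 12\,(p^{f(\fp/p)}-1)/(p-1)<p^{f(\fp/p)}-1$ because $p-1>12$, the sum must be $0$; reading off base-$p$ digits (each $|\delta_{\sigma_j}|\le 12<p$) then gives $\delta_{\sigma_j}=0$ for all $j$, so $\eps=\eps'$. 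I expect the real obstacle to lie in the second step in the exact form needed — pinning down the possible inertia weights and their twelfth powers at a prime above $p$ in a non-Galois and possibly $p$-ramified field, and tracking normalisations precisely enough that the exponents land in $\{0,4,6,8,12\}$ on the nose — whereas the first, third and fourth steps are essentially formal.
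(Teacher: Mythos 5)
The paper itself offers no proof of this proposition: it is quoted from the authors' earlier work (Proposition 3.4 of \cite{banwait2022derickx}, generalising David's Proposition 2.6), and the proof there is exactly the argument you outline --- unramifiedness of $\lambda^{12}$ away from $p$, global reciprocity reducing $\mu((\alpha))$ to the local components at the primes $\fp \mid p$, the Momose--David--Raynaud analysis of $\lambda^{12}$ on inertia producing exponents in $\{0,4,6,8,12\}$, and the distribution of these exponents over the embeddings $k \hookrightarrow K$ reduced modulo $\fp_0$ (with the local input rightly identified as the part being imported rather than reproved). Your uniqueness argument --- choosing $\alpha$ by CRT to generate each $\kappa(\fp)^\times$ and using the digit estimate $12(p^{f}-1)/(p-1) < p^{f}-1$, valid precisely because $p>13$ and $p$ is unramified --- is likewise the standard one and is correct.
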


In the above statement we have used the ideal theoretic description of class field theory to see $\lambda^{12}$ as a character on the fractional ideals coprime to $p$ (cf. \cite[Remark 2.1]{banwait2022derickx}). This is done largely so that one may employ the following convenient shorthand notation $\lambda^{12}((\alpha))$:
$$\lambda^{12}((\alpha)) = \prod_{\fq} \lambda^{12}(\frob_\fq)^{v_\fq(\alpha)}.$$

We refer to $\eps_{\fp_0}$ as \textbf{an isogeny signature of $\lambda$ with respect to $\fp_0$}. Note that since $\Gal(K/\Q)$ acts transitively on the primes above $p$, a different choice of prime $\fp_0$ merely permutes the integers $a_{\sigma}$. We will therefore often drop the subscript $\fp_0$ and speak of $\eps$ as \textbf{an isogeny signature of $\lambda$}.
\begin{remark}\main{5}
The use of the word `signature' in this context is a bit unfortunate. Indeed, an isogeny character can (if $p \leq 13$ or $p$ ramifies in $k$) have multiple signatures attached to it, while the word signature itself hints at something that is unique for every isogeny character. The historical reason for this name traces back to the article \cite{freitas2015criteria}. In section 2 of that article they explicitly state they assume $p \geq 17$ and $p$ unramified causing the signature to be unique. However, in \cite{banwait2022derickx} there was a significant effort to remove the unramified assumption, at the expense of having signatures that may no longer be unique. The role a signature of an isogeny character plays in the entire story is to get an explicit description of the action of Galois of the character in terms of class field theory. The statements that are proved in this article do not require $p$ to be unramified in $k$. The way \Cref{thm:main}, \Cref{prop:uniform_type_1_bound} and \Cref{prop:uniform_case_a} should be interpreted is that one is free to choose any of the signatures attached to the isogeny in order to have the conditions of those statements be met. However, if one goes through the proof of \cite[Proposition 3.4]{banwait2022derickx} (see in particular Remark 3.2 there) one can see that $\trace \eps$ is the same for all possible signatures that one can attach to a fixed isogeny character provided $p - 1 > 12d$. So the freedom one gets from this is rather limited.
\end{remark}

We collect here some further results on isogeny signatures that will be used later in the paper.

\begin{lemma}\label{lem:atkin_lehner_signature}
Let $K$ be a quadratic field, and let $\sigma$ denote the nontrivial Galois automorphism of $K$. Let $\phi : E_1 \to E_2$ be a $K$-rational isogeny of prime degree $p$ between elliptic curves defined over $K$. If $\widehat{\phi} \cong \phi^\sigma$, then the signature of $\phi$ is of the form $(a, 12 - a)$ for some $a \in {0, 4, 6, 8, 12}$.
\end{lemma}

\begin{proof}
    Denoting by $(a,b)$ the signature of $\phi$, we know that the signature of $\widehat{\phi}$ is $(12 - a, 12 - b)$, and that the signature of $\phi^\sigma$ is $(b,a)$, from where the result follows.
\end{proof}

\begin{remark}
    \begin{enumerate}
        \item Note that $\widehat{\phi} \cong \phi^\sigma$ in particular means that $E_2 \cong E_1^\sigma$ so that in fact both $E_1$ and $E_2$ are $\Q$-curves.
        \item Let $x \in X_0(p)(K)$ be the point corresponding to $\phi$. Then the condition $\widehat{\phi} \cong \phi^\sigma$ is equivalent to $w_p(x) = \sigma(x)$ from which it follows that the image of $x$ in $X_0(p)^+(K)$ actually lies in $X_0(p)^+(\Q)$. Conversely if $x \in X_0(p)(K)$ maps to some rational point $x' \in X_0(p)^+(\Q)$ and the point $x$ is honestly quadratic, meaning $x \neq \sigma(x)$, then $w_p(x) = \sigma(x)$ and the above lemma is applicable. It is through this interpretation that the above lemma will be applied in \Cref{sec:type_1_weeding}.
    \end{enumerate}
\end{remark}

\begin{lemma}\label{lem:00_sig}
    Let $p \geq 17$ be a prime and $\phi: E \to E'$ a $p$-isogeny between elliptic curves over a number field $K$. If $p$ is inert in $K$ and $E$ has semistable reduction at the unique prime of $K$ above $p$, then either $\phi$ or its dual has signature 
    $$\underbrace{(0, 0, 0,  \ldots, 0)}_{\deg K \text{zeros}}.$$
 \end{lemma}
\begin{proof}
Let $\fp$ be the unique prime of $K$ above $p$. By \cite[Proposition 3.1]{banwait2022derickx} we know that $a_\fp=0$ or $a_\fp=12$. By \cite[Corollary 3.5]{banwait2022derickx} it follows that all the coefficients $a_\sigma$ of the signature have to be equal to $a_\fp$. In particular, the signature is $(0, 0,  \ldots, 0)$ or $(12, 12,  \ldots, 12)$. In the first case $\phi$ has signature $(0, 0,  \ldots, 0)$ and in the second case its dual has signature $(0, 0,  \ldots, 0)$.
\end{proof}

\subsection{On types and Momose types}
Fixing an ordering of the set of embeddings $\Hom(k,K)$ we may regard $\eps$ as a $d$-tuple in $\left\{0,4,6,8,12\right\}^d$.

The signatures are divided into 4 types according to the definition below.

\begin{definition}[{\cite[Def. 3.12]{banwait2022derickx}}]\label{def:sig_types}
Let $\eps = \sum_{\sigma}a_\sigma\sigma$ be an isogeny signature: 
\begin{itemize}
    \item  If $\eps = 0\sum_{\sigma}\sigma$ or $\eps = 12\sum_{\sigma}\sigma$, then $\eps$ is of \textbf{Type 1}.
    \item  If $\eps = 6\sum_{\sigma}\sigma$ (or equivalently $\eps = 6\Nm(k/\Q)$), then $\eps$ is of \textbf{Type 2}.
    \item If there exists an index 2 subgroup $H \subseteq G := \Gal(K/\Q)$ with $\Gal(K/k) \subseteq H$, $L:=K^H$ is imaginary quadratic and either 
    \begin{align*}
    \eps &= 12\sum_{\sigma \in \Sigma_L}\sigma \text{\quad or}\\
    \eps &= 12\sum_{\sigma \in \Sigma \setminus \Sigma_L}\sigma,
    \end{align*} then $\eps$ is of \textbf{Type 3 with field $L$}.
    \item If $\eps$ is not of Type 1, 2 or 3 then $\eps$ is \textbf{generic}.
\end{itemize}
\end{definition}

\begin{example}\label{ex:iso_special}

If $p$ is such that $X_0(p)$ is hyperelliptic, then one can readily generate elliptic curves with a $p$-isogeny over quadratic fields, by looking at inverse images of the hyperelliptic map at rational points. The signatures of the resulting isogenies can be computed; this is carried out in \path{magma_scripts/EpsilonTypes.m}. This allows one to find examples of isogenies of certain signatures. More details can be found in \cite[Example 3.6]{banwait2022derickx}, where it was already shown that there exists an isogeny of degree $23$ with signature $(6,6)$.

\begin{enumerate}
    \item Let $k = \Q(\sqrt{-655843})$, and let $E/k$ be an elliptic curve with $j$-invariant
    \[ \frac{79616218981957632000\sqrt{-655843} - 54720002186639769600000}{27368747340080916343}. \]
    Then $E$ admits a $k$-rational $23$-isogeny of signature $(0,12)$, and $E$ does not have CM. This example arose from executing \path{print_eps_type_info(5/7, 23)}.\\
    
    \item Let $k = \Q(\sqrt{955376})$, and let $E/k$ be an elliptic curve with $j$-invariant
    \[ 17474027581855244786031360\sqrt{955376} + 17079697731257932620483472896.\]
    Then $E$ admits a $k$-rational $29$-isogeny of signature $(4,8)$. This example arose from executing \path{print_eps_type_info(11, 29)}.
\end{enumerate}
\end{example}

Aside from signatures having a type, characters can also have a type which we will define below. We refer to types of the characters as Momose types, both in order to make explicit the distinction with the signature type, as well as recognising that it was Momose who defined these particular notions in \cite{momose1995isogenies}. The present paper contains no results on Momose Type 3 characters, and furthermore the definition of Momose Type 3 is rather involved, so we decided to omit this definition here and instead refer the interested reader to \cite[Def. 3.20]{banwait2022derickx}.

We let $\chi_p$ denote the mod-$p$ cyclotomic character.
\begin{definition}\cite[Def. 3.15, 3.16]{banwait2022derickx}
Let $\lambda$ be a $p$-isogeny character. 
\begin{itemize}
    \item If $\lambda^{12}$ or $(\lambda/\chi_p)^{12}$ is everywhere unramified then $\lambda$ is of \textbf{Momose Type $1$}.
    \item  If $\lambda^{12} = \chi_p^6$ then $\lambda$ is of \textbf{Momose Type $2$}.
    \item If $\lambda$ has a generic signature, then $\lambda$ is of \textbf{generic Momose Type}.
\end{itemize}
\end{definition}

A $p$-isogeny character having Momose Type $n$ (for $n = 1,2$ or $3$) implies that its signature has type $n$ (see \cite[Section 3.4]{banwait2022derickx}). For $n = 1$ the converse also holds:

\begin{lemma}
    $\lambda$ is of Momose Type 1 if and only if it has a signature of Type 1.
\end{lemma}

\begin{proof}
Write $\mu=\lambda^{12}$. Passing to the dual isogeny replaces $\lambda$ by $\chi_p/\lambda$. This interchanges the two clauses ``$\lambda^{12}$ unramified'' and ``$(\lambda/\chi_p)^{12}$ unramified'' defining Momose Type~1, and interchanges the signatures $(0,\dots,0)$ and $(12,\dots,12)$ defining signature type $1$. Both conditions in the lemma are thus invariant under $\lambda\mapsto\chi_p/\lambda$, so it suffices to prove
\[ \mu \text{ is everywhere unramified} \iff \eps=(0,\dots,0). \]
As $\mu$ is unramified outside $p$, the left-hand side says $\mu|_{I_\fp}=1$ for every $\fp\mid p$. By \cite[Proposition 3.1]{banwait2022derickx}, $\mu|_{I_\fp}=\theta_{p-1}^{a_\fp}$ with $0\le a_\fp\le 12e_\fp<p-1$ and $\theta_{p-1}$ of order $p-1$, so $\mu|_{I_\fp}=1\iff a_\fp=0$. By the construction of $\eps$ in \cite[Proposition 3.4]{banwait2022derickx}, each $a_\fp$ is the sum of the non-negative signature integers $a_\sigma$ for $\sigma\in\bigsqcup_i S_{\fp,i}$; hence $a_\fp=0$ for all $\fp\mid p$ if and only if $a_\sigma=0$ for all $\sigma$, i.e.\ $\eps=(0,\dots,0)$.
\end{proof}

However, $\lambda$ having a signature of type $2$ is equivalent to the character $\lambda^{12}/\chi_p^6$  being an everywhere unramified character. This condition is a weaker condition than the Momose Type 2 condition $\lambda^{12}/\chi_p^6=1$.

\subsection{Action of Frobenius at the cusps}\label{sec:frobenius_action}
\minor{17} In this section we provide a preliminary result on the action of Frobenius at the cusps of $X_0(p)$ to be used in the sequel.

\begin{lemma}\label{lem:frobenius_at_cusps}
Let $p$ and $q$ be distinct primes. Let $K_q$ be a finite field extension of
$\Q_q$ with ring of integers $\O_q$ and maximal ideal $\fq$. Let $(E,G)$ be a
pair consisting of an elliptic curve $E$ over $\O_q$ with potentially
multiplicative reduction at $\fq$ together with a subgroup $G \subset E$ of
order $p$, and let $x \in X_0(p)(\O_q)$ be the corresponding point. Then the
action of $\Frob_\fq$ on $G$ is well defined up to sign, and:
\begin{enumerate}
  \item if $x$ specializes to the cusp $\infty$ modulo $\fq$, then $\Frob_\fq$
        acts on $G$ as $\pm \Nm(\fq)$;
  \item if $x$ specializes to the cusp $0$ modulo $\fq$, then $\Frob_\fq$ acts
        on $G$ as $\pm 1$.
\end{enumerate}
\end{lemma}

Here, for the labelling of the cusps $0$ and $\infty$ on $X_0(p)$ we use the conventions of \cite[\S 2]{derickx2019torsion}, where the cusp at $\infty$ is represented by a N\'eron $1$-gon with a $\mu_p$ embedding and the cusp $0$ is a N\'eron $p$-gon with an embedding of $\Z/p\Z$.

\begin{proof}
The action of Frobenius on $G$ is a priori only well defined up to the action
of inertia. By taking a quadratic twist of $E$, we may assume that $E$ has
actual (i.e.\ not merely potential) multiplicative reduction, so that inertia
acts trivially and $\Frob_\fq$ is well defined up to sign. By a further
quadratic twist we may assume that $E$ has split multiplicative reduction;
note that neither twist affects $X_0(p)$, so $x$ and its specialization are
unchanged.

Let $\mathcal E$ be a N\'eron model of $E$. Since $E$ has split multiplicative
reduction, the identity component of $\mathcal E_{\F_\fq}$ is isomorphic to
$\G_{m,\F_\fq}$. The base change of $\mathcal E$ along an unramified extension
of $\O_q$ is still a N\'eron model, and since $p$ and $q$ are coprime, $G$
acquires a generator over some unramified extension of $\O_q$. In particular
this generator extends to a point on the N\'eron model, and so $G$ extends to
a subgroup $\mathcal G$ of $\mathcal E$.

We now treat the two cases.

\emph{Case (1):} The point $x$ specializes to the cusp $\infty$ modulo $\fq$.
This corresponds to $\mathcal G_{\F_\fq}$ being contained in the identity
component of $\mathcal E_{\F_\fq}$. In this case $\mathcal G_{\F_\fq}$ is
isomorphic to $\mu_{p,\F_\fq}$, and hence $\Frob_\fq$ acts on $G$ as multiplication by
$\pm\Nm(\fq)$.

\emph{Case (2):} The point $x$ specializes to the cusp $0$ modulo $\fq$. This
corresponds to $\mathcal G_{\F_\fq}$ being isomorphic to a subgroup of the
component group of $\mathcal E$. Since Frobenius acts trivially on the
component group, $\Frob_\fq$ acts on $G$ as multiplication by $\pm 1$.
\end{proof}

\subsection{Bounding cuspidal reduction}

\main{19} Throughout this subsection we use the cusps $0$ and $\infty$ of $X_0(p^n)$
defined by the same moduli interpretation as in \Cref{sec:frobenius_action}:
the cusp $\infty$ is represented by a N\'eron $1$-gon with a $\mu_{p^n}$
embedding, and the cusp $0$ by a N\'eron $p^n$-gon with an embedding of
$\Z/p^n\Z$. For $n=1$ these are the only two cusps, but for $n>1$ they are
only two among the cusps of $X_0(p^n)$.

\Cref{thm:parent} below is a strengthening of \cite[Thm 1.6]{parent1999bornes}, however we do not claim any originality for this result since all the ingredients of the proof are already in Parent's article.

\begin{theorem}\label{thm:parent}
Let  $E$ be an elliptic curve over a number field $k$ of degree $d$. Suppose $G \subset E$ is a cyclic subgroup scheme of prime power order $p^n$ \main{18}defined over $k$. Let $L$ be a finite field extension of $k$ such that $G(L)$ contains a point $P$ of order $p^n$. Let $q \neq 2, p$ be a prime. Suppose that for every prime $\fq'$ of $\O_L$ over $q$ the N\'eron model of $E$ over $\O_{L,\fq'}$ has split multiplicative reduction and that $P$ has order $p^n$ in the component group. Then \begin{align}
   p^n < C_p(s_p\max(3,d))^6, \label{eq:parent_bound}
\end{align}
where $C_p:=65, s_p := 2$ when $p>2$, and $C_2:=129, s_2:=3$.
\end{theorem}
Note that the condition $q \neq 2$ is not explicitly stated in \cite[Thm 1.6]{parent1999bornes}, however it is a running assumption in the entire article. Additionally, we decided to write $C_p$ for Parent's $C_p^2$.

The original statement due to Parent is obtained by taking $k=L$. The main advantage of the above slight generalisation is that one gets a smaller upper bound in $\ref{eq:parent_bound}$ when $k \neq L$.

\begin{proof}
Let $\fq = \O_k \cap \fq'$ and let $x \in X_0(p^n)(k)$ be the point corresponding to $(E,G)$. The moduli interpretation of the cusps \main{19} recalled above and the assumption that $P$ has order $p^n$ in the component group shows that $x_{L}$ reduces to the cusp $0$ modulo $\fq'$, implying that $x$ reduces to $0$ modulo $\fq$. So \Cref{thm:parent} is a consequence of case (1) of the following proposition.
\end{proof}

\begin{proposition}\label{prop:parent}
Let  $E$ be an elliptic curve over a number field $k$ of degree $d$. Suppose $G \subset E$ is a cyclic subgroup scheme of prime power order $p^n$ \main{18} defined over $k$, and let $x\in X_0(p^n)(k)$ be the corresponding point.
Let $q\neq 2,p$ be a prime such that \main{19} one of the following holds:
\begin{enumerate}
    \item for all primes $\fq$ of $\O_k$ over $q$, the point $x$ reduces to the
          cusp $0$ of $X_0(p^n)$ modulo $\fq$; or
    \item for all primes $\fq$ of $\O_k$ over $q$, the point $x$ reduces to the
          cusp $\infty$ of $X_0(p^n)$ modulo $\fq$.
\end{enumerate}
Then $$p^n < C_p(s_p\max(3,d))^6.$$
\end{proposition}

\main{19} Note that for $n>1$ the cusps $0$ and $\infty$ do not exhaust the cusps of
$X_0(p^n)$, so (1) and (2) in the above proposition are not an exhaustive dichotomy; they are the only
two cases we shall need.

Before we sketch how to deduce \Cref{prop:parent} from the results in \cite{parent1999bornes} we need to introduce some notation and definitions.

\begin{definition}
If $f : X \to Y$ is a morphism of finite type between noetherian schemes, we say that $f$ is a \textbf{formal immersion} at a point $x$ if the induced map on the completion of local rings
\[ \widehat{O}_{Y,f(x)} \to \widehat{O}_{X,x}\]
is surjective.
\end{definition}


Let $X_0(p^n)^{(d)}$ denote the $d$\textsuperscript{th} symmetric power of the modular curve $X_0(p^n)$, which we regard as a smooth scheme over $S := \Spec(\Z[1/p])$. We then have a map
\begin{alignat*}{2}
    f^{(d)}_{p^n} : \eqmathbox{X_0(p^n)^{(d)}_{/S}} &\longrightarrow \eqmathbox{J_0(p^n)_{/S}} & &\longrightarrow \eqmathbox{J_0^e(p^n)_{/S}}\\
    \eqmathbox{D} &\longmapsto \eqmathbox{[D - d(\infty)]} & &\longmapsto [D - d(\infty)] \Mod{{\mathfrak{J}}J_0(p^n)};
\end{alignat*}
here $\mathfrak{J}$ is the winding ideal (see \cite[\S II.18]{Mazur3} or \cite[\S 3.8]{parent1999bornes}) and $J_0^e(p^n)$ is the winding quotient of $J_0(p^n)$.

If $x \in X_0(p^n)(k)$ then we use $x^{(d)}$ to denote its image in $X_0(p^n)^{(d)}(\Q)$.

\begin{proof}[Proof of \Cref{prop:parent}]
For more details on this proof see \cite[\S 1.3]{parent1999bornes}. Note in case (1) one can replace $(E,G)$ by $(E/G, E[p^n]/G)$ to end up in case (2), hence we are reduced to proving case (2) of the proposition.
The assumption (2) ensures that $x^{(d)}_{\F_q}$ equals $\infty^{(d)}_{\F_q}$. Combined with $x^{(d)} \neq \infty^{(d)}$ this implies that $f_{p^n}^{(d)}$ is not a formal immersion at $\infty^{(d)}_{\F_q}$, see \cite[\S 4.12, Thm 4.15]{parent1999bornes}. Now the fact that $f_{p^n}^{(d)}$ is not a formal immersion at $\infty^{(d)}_{\F_q}$ ensures that $p^n<C_p(s_p\max(3,d))^6$, see \cite[Thm. 1.8, Prop 1.9]{parent1999bornes}.
\end{proof}

The above argument shows that the bound \ref{eq:parent_bound} in \Cref{thm:parent,prop:parent} can be improved if one can show that $f^{(d)}_{p^n}$ is a formal immersion at $\infty^{(d)}_{\F_q}$. 

\begin{definition}
Let $d,n$ be integers and $q > 2$ a prime. We  
define the set $$\BADFI(q,d,n) := \set{p \text{ prime} \mid p\neq q \text{ and }f_{p^n}^{(d)} \text{ is not a formal immersion at 
 } \infty^{(d)}_{\F_q}}.$$
\end{definition}

\Cref{thm:parent} and \Cref{prop:parent} already give a decent asymptotic upper bound in the case of cuspidal reduction. However, for the algorithmic parts of this paper, and the study of $d=2,3$, we will use the following more precise version.

\begin{theorem}\label{thm:parent_refined}
Let $E$ be an elliptic curve over a number field $k$ of degree $d$, let
$G \subset E$ be a cyclic subgroup scheme of prime power order $p^n$
defined over $k$, and let $x \in X_0(p^n)(k)$ be the corresponding point.
Let $q \neq 2,p$ be a prime, and suppose that at least one of the
following holds:
\begin{enumerate}
    \item there is a finite field extension $L$ of $k$ such that $G(L)$
          contains a point $P$ of order $p^n$, and for every prime
          $\fq'$ of $\O_L$ over $q$ the N\'eron model of $E$ over
          $\O_{L,\fq'}$ has split multiplicative reduction and $P$ has
          order $p^n$ in the component group;
    \item for all primes $\fq$ of $\O_k$ over $q$, the point $x$ reduces
          to the cusp $0$ of $X_0(p^n)$ modulo $\fq$; or
    \item for all primes $\fq$ of $\O_k$ over $q$, the point $x$ reduces
          to the cusp $\infty$ of $X_0(p^n)$ modulo $\fq$.
\end{enumerate}
Then $p \in \BADFI(q,d,n')$ for all $n'$ with $1 \leq n' \leq n$.
\end{theorem}

\begin{proof}
Fix $n'$ with $1 \le n' \le n$, and let
$x' = (E, G[p^{n'}]) \in X_0(p^{n'})(k)$ be the image of $x$ under the
degeneracy map $X_0(p^n) \to X_0(p^{n'})$, $(E,C) \mapsto (E, C[p^{n'}])$.
Each of the assumptions (1)--(3) implies that, modulo every prime $\fq$ of
$\O_k$ above $q$, the point $x$ reduces to the cusp $0$ or to the cusp
$\infty$ of $X_0(p^n)$: for (2) and (3) by hypothesis, and for (1) by the
argument in the proof of \Cref{thm:parent}. This degeneracy map fixes the
cusps $0$ and $\infty$ and commutes with reduction modulo $\fq$ (as
$q \neq p$), so $x'$ reduces to the same cusp at level $p^{n'}$. Hence $x'$
satisfies the hypotheses of \Cref{prop:parent} at level $p^{n'}$, and the
argument proving that proposition shows that $f^{(d)}_{p^{n'}}$ is not a
formal immersion at $\infty^{(d)}_{\F_q}$, i.e.\ that
$p \in \BADFI(q,d,n')$.
\end{proof}

The contents of \cite[Def 5.4, Thm 5.5]{banwait2022derickx} can be summarised as follows:
\begin{theorem}\label{thm:badfi_computable}
Let $q > 2$ be a prime and $d$ an integer. Then there is an algorithm that computes $\BADFI(q,d,1)$.
\end{theorem}
In order to use \Cref{thm:parent_refined} we also rely on the fact that the algorithm from \Cref{thm:badfi_computable} has been implemented in an efficient way in \cite{isogeny_primes}. Note that in the sequel we will only ever use \Cref{thm:parent_refined} in the $n=1$ case, but we needed the more general case to establish the strengthening of Parent's result (\Cref{prop:parent}) which yields a smaller upper bound in \cref{eq:parent_bound}.

\subsection{Isogeny signatures and moduli interpretation}\label{sec:signature_interpretation}\main{2}

In discussing our work with several mathematicians, including the referee, the following question repeatedly arose.

\begin{question}
Let $p$ be a prime and $\eps$ an isogeny signature. Does there exist a modular curve $X_\eps(p)$ whose points parametrize elliptic curves together with a $p$-isogeny of signature $\eps$?
\end{question}

We do not have a formal proof that such a modular curve does not exist. However, while working on this project we developed the intuition that the answer should be ``No'', and that the geometric meaning of isogeny signatures should instead be interpreted in a different way. What follows is a speculative discussion that attempts to capture this intuition in an informal way; it should be seen as an invitation to the interested reader to see how much of it can be made rigorous. The authors would not be surprised if some of what follows already appears somewhere in the vast literature on overconvergent modular forms.

Throughout this section we work with the following setup.
\begin{align*}
    p &: \mbox{a prime}\\
    d &: \mbox{a positive integer}\\
    k &: \mbox{a number field of degree $d$}\\
    G_k &: \Gal(\overline{k}/k)\\
    \fp_i &: \mbox{the primes of $k$ lying over $p$, for $1 \leq i \leq r$}\\
    e_i &: \mbox{the ramification index at $\fp_i$}\\
    f_i &: \mbox{the inertia degree at $\fp_i$}\\
    k_{\fp_i} &: \mbox{the completion of $k$ at $\fp_i$}\\
    \O_{\fp_i} &: \mbox{the ring of integers of $k_{\fp_i}$}\\
    I_i &: \Gal(\overline{k_{\fp_i}}/k_{\fp_i}^{nr}) \subseteq \Gal(\overline{k_{\fp_i}}/k_{\fp_i})\mbox{, the inertia subgroup at $\fp_i$}\\
    \theta_{p-1} &: I_i \to \F_p^\times\mbox{, the fundamental character (see \cite[Sec. 1]{serre_prop_galois})}\\
    E &: \mbox{an elliptic curve over $k$}\\
    W \subseteq E &: \mbox{a cyclic subgroup of order $p$}\\
    \lambda &: \mbox{the isogeny character $G_k \to \F_p^\times$ associated to $W$.}
\end{align*}

We recall the following proposition, which is the foundation on which isogeny signatures are later built.
\begin{proposition}[{\cite[Prop. 3.4]{banwait2022derickx},\cite[Prop. 3.2]{david2011borne}}]\label{prop:refined_signature}
For each prime $\fp_i$ there is an integer $0 \leq a_i \leq 12e_i$ satisfying $a_i \equiv 0, 4, 6, 8 \mod 12$ such that $$\lambda^{12}\mid_{I_i} = \theta_{p-1}^{a_i}.$$
\end{proposition}

The special fibre of $X_0(p)$ over $\Z_p$ consists of two copies of $X_0(1)$ glued together along the supersingular points, one copy corresponding to $W = \ker \Ver_p$ and the other to $W = \ker \Frob_p$. The model $X_0(p)_{\Z_p}$ is regular at every point, even at the supersingular points in characteristic $p$, with just two exceptions: if $p \equiv -1 \mod 3$ then $X_0(p)_{\Z_p}$ has a singularity of type $A_2$ at the supersingular point over $\F_p$ with $j = 0$, and if $p \equiv -1 \mod 4$ then it has a singularity of type $A_1$ at the supersingular point over $\F_p$ with $j = 1728$.

If we base change $X_0(p)$ to $\O_{\fp_i}$, the singularities at most supersingular points become singularities of type $A_{e_i-1}$. The exceptions are again $j = 0$ with $p \equiv -1 \mod 3$, where one obtains a singularity of type $A_{3e_i-1}$, and $j = 1728$ with $p \equiv -1 \mod 4$, where one obtains a singularity of type $A_{2e_i-1}$. Resolving a singularity of type $A_n$ replaces the point by a chain of $\P^1$'s of length $n$. In particular, the special fibre of the minimal regular model of $X_0(p)_{\O_{\fp_i}}$ is depicted in \Cref{fig:special_fibre}.

\begin{figure}[ht]
\centering
\begin{tikzpicture} 
\tikzset{x=1ex,y=1ex} 
\clip (20,0) rectangle (100, 58.64); 
\fill[bgColor] (0,0) rectangle (85.50, 58.64); 

\draw[line width=1.19pt, color=color0, ] (27.73, 3.17) -- (27.73, 54.68)  node[pos=1, right, scale=0.8] {$W = \ker \Ver_p, a_i=0$}; 
\draw[line width=1.19pt, color=color0, ] (57.45, 3.17) -- (57.45, 54.68)  node[pos=1, right, scale=0.8] {$W = \ker \Frob_p, a_i=24\, (=\!12e_i)$}; 

\draw[line width=1.19pt, color=color0, ] (59.43, 46.75) -- (25.75, 46.75) node[pos=0.5, above, scale=0.8] {$a_i=12$};
\draw[line width=1.19pt, color=color0, ] (59.43, 40.81) -- (25.75, 40.81) node[pos=0.5, above, scale=0.8] {$a_i=12$};
\draw[line width=1.19pt, color=color0, ] (59.43, 34.87) -- (25.75, 34.87) node[pos=0.5, above, scale=0.8] {$a_i=12$};

\draw[line width=1.19pt, color=color0, ] (35.67, 7.39) -- (25.77, 11.12) node[pos=0.5, above, scale=0.7, rotate=-19] {$a_i\!=\!4$};
\draw[line width=1.19pt, color=color0, ] (47.56, 7.39) -- (37.65, 11.12) node[pos=0.4, above, scale=0.7, rotate=-19] {$a_i\!=\!12$} ;
\draw[line width=1.19pt, color=color0, ] (59.45, 7.39) -- (49.54, 11.12) node[pos=0.4, above, scale=0.7, rotate=-20, fill=white] {$a_i\!=\!20$};
\draw[line width=1.19pt, color=color0, ] (41.62, 11.12) -- (31.71, 7.39) node[pos=0.4, below, scale=0.7, rotate=19] {$a_i\!=\!8$} ;
\draw[line width=1.19pt, color=color0, ] (53.50, 11.12) -- (43.60, 7.39) node[pos=0.4, below, scale=0.7, rotate=19] {$a_i\!=\!16$} ;

\draw[line width=1.19pt, color=color0, ] (39.57, 21.13) -- (25.70, 25.09) node[pos=0.5, above, scale=0.7, rotate=-17] {$a_i\!=\!6$} ;
\draw[line width=1.19pt, color=color0, ] (59.38, 21.13) -- (45.51, 25.09) node[pos=0.5, above, scale=0.7, rotate=-17] {$a_i\!=\!12$} ;
\draw[line width=1.19pt, color=color0, ] (49.47, 25.09) -- (35.61, 21.13)node[pos=0.5, above, scale=0.7, rotate=17] {$a_i\!=\!18$} ;

\node[color=color0, anchor=north west, scale=1, rotate=0.00, text width=6.5cm] at (61, 48.03) {Supersingular points with $j \neq 0, 1728$. These are chains of $\P^1$'s of length $e_i - 1$.};
\node[color=color0, anchor=north west, scale=1, rotate=0.00, text width=6.5cm] at (61, 26.93) {The supersingular point with $j=1728$. Only occurs when $p \equiv -1 \mod 4$. This is a chain of $\P^1$'s of length $2e_i -1$.};
\node[color=color0, anchor=north west, scale=1, rotate=0.00, text width=6.5cm] at (61, 13.11) {The supersingular point with $j=0$. Only occurs when $p \equiv -1 \mod 3$. This is a chain of $\P^1$'s of length $3e_i -1$.};
\end{tikzpicture}
\caption{The special fibre of the minimal regular model of $X_0(p)_{\O_{\fp_i}}$ in the case $e_i = 2$. Each component is labelled with the corresponding value of $a_i$.}
\label{fig:special_fibre}
\end{figure}

On the ordinary locus, the special fibre of $X_0(p)_{\O_{\fp_i}}$ consists of two components: one on which $W = \ker \Ver_p$, where $a_i = 0$, and one corresponding to $W = \ker \Frob_p$, where $a_i = 12e_i$. As described above, these two components are connected by chains of $\P^1$'s corresponding to the supersingular points. We suspect that the value of $a_i$ can be read off from the position of the point $x_i := (E,W)_{\O_{\fp_i}} \in X_0(p)(\O_{\fp_i})$ in this chain of $\P^1$'s.

\begin{question}\label{q:refined_question}
Suppose $j(x_i) \neq 0, 1728 \mod \fp_i$ and that $x_i$ specializes to the $n$-th $\P^1$ in the chain of $\P^1$'s. Is it then true that $a_i = 12n$? Does the same hold for $j = 1728$ and $j = 0$, but with $a_i = 6n$ and $a_i = 4n$ respectively?
\end{question}

This question is illustrated in \Cref{fig:special_fibre}, which shows the special fibre of the minimal regular model in the case $e_i = 2$, with each component labelled by our guess for the corresponding value of $a_i$.

A positive answer to the above question would give a precise geometric interpretation of the $a_i$ values.

\begin{definition}
The refined isogeny signature associated to $(E,W)$ is the sequence of triples $((e_1,f_1,a_1),(e_2,f_2,a_2),\ldots, (e_r,f_r,a_r))$.
\end{definition}

Note that $\sum_{i=1}^{r} e_i f_i = [k : \Q]$ and that the $a_i$ satisfy the conditions of \Cref{prop:refined_signature}.

In the proof of \cite[Prop. 3.4]{banwait2022derickx} there is a recipe for passing from the refined isogeny signature to the isogeny signature. Indeed, write $a_i = 12s_i + r_i$ with $r_i$ the remainder of $a_i$ modulo $12$, so that $s_i \leq e_i$. To the triple $(e_i,f_i,a_i)$ we then associate the sequence of integers
\[
\eps(e_i,f_i,a_i) := \underbrace{0,\ldots,0}_{f_i(e_i-s_i-1)\text{ times}},
\underbrace{r_i,\ldots,r_i}_{f_i\text{ times}},
\underbrace{12,\ldots,12}_{f_is_i\text{ times}}.
\]

Note that this does not make sense in the edge case $s_i = e_i$, in which case it should be interpreted as a sequence of $12$'s of length $f_ie_i$.

\begin{definition}
    The isogeny signature associated to the refined isogeny signature $((e_1,f_1,a_1),(e_2,f_2,a_2),\ldots, (e_r,f_r,a_r))$ is the signature obtained by concatenating $\eps(e_1,f_1,a_1)$ up to $\eps(e_r,f_r,a_r)$.
\end{definition}

\Cref{q:refined_question} is a guess for the geometric interpretation of the refined isogeny signature in terms of the corresponding point $x_i \in X_0(p)(\O_{\fp_i})$. However, a common strategy for studying degree-$d$ points on modular curves is to translate the problem into a question about $\Q$-rational points on the $d$-th symmetric power $X_0(p)^{(d)}$. This naturally leads to the following question.

\begin{question}
Does there exist a model of $X_0(p)^{(d)}$ over $\Z_p$ such that one can determine the (refined) isogeny signature associated to $(E, W)$ from the components of the special fibre of this model?
\end{question}

Even if there is no clean description in terms of the special fibres of a model of $X_0(p)^{(d)}$, we still expect that the (refined) isogeny signature can be described in terms of a stratification of $X_0(p)^{(d)}(\Q_p)$. This expectation comes from the $p$-adic nature of the definition of the isogeny signature.

We also believe that all the isogeny-character-based computations carried out in this article and in \cite{banwait2022derickx}, following the original ideas of \cite{momose1995isogenies}, can be reinterpreted in terms of \'etale descent \cite{poonen2023rational} as a method for studying rational points. Again, at present we do not have a proof justifying this belief.

To spell this out in more detail, first define $\Delta_{12} \subset (\Z/p\Z)^\times$ to be the subgroup of all elements of order dividing $12$, and set $X_{\Delta_{12}}(p) := X_1(p)/\Delta_{12}$. There is then a map $\pi_{12} : X_{\Delta_{12}}(p) \to X_0(p)$, and this map is \'etale over $\Z[1/p]$. One of the main results of descent theory \cite{poonen2023rational} then implies that
$$X_0(p)(k) = \bigcup_{\chi:\Gal(\overline k/k) \to (\Z/p\Z)^\times/\Delta_{12}} \pi_{12}^\chi \, (X_{\Delta_{12}}(p)^\chi(k)),$$
where it suffices to restrict to those characters $\chi : \Gal(\overline k/k) \to (\Z/p\Z)^\times/\Delta_{12}$ that are unramified away from $p$.

The role that the character $\chi$ plays in \'etale descent should be very similar to the role that $\lambda^{12}$, or possibly its inverse, plays in the isogeny-character computations. That is, the restrictions on how $\lambda^{12}$ acts on the inertia subgroups at $\fp_i$ coming from \Cref{prop:refined_signature} should correspond to $X_{\Delta_{12}}(p)^\chi(k_{\fp_i})$ being empty for certain characters $\chi$. Similarly, the information about $\lambda^{12}$ gained from the auxiliary primes $\fq$ of $k$ coprime to $p$ in \cite{banwait2022derickx} should correspond to $\fq$-adic obstructions to $X_{\Delta_{12}}(p)^\chi$ having $k$-rational points.

We believe that this interpretation in terms of descent, if proven correct rather than merely claimed as part of our intuition, could be a valuable way to understand the results of \cite{momose1995isogenies, banwait2022derickx} and other articles concerning isogeny signatures, as it opens the way to using techniques not based on isogeny characters in cases where the isogeny-character-based methods fail. However, what we have described so far is still lacking in light of the results of the present article: it does not explain how one can sometimes obtain results that depend only on the degree of the number field and not on the number field itself, since the descent is carried out over $k$. For interpreting the results of this article, we believe one could instead work in terms of \'etale descent on the $d$-th symmetric power $X_0(p)^{(d)}$.

If $d > 1$, then the natural map $X_{\Delta_{12}}(p)^{(d)} \to X_0(p)^{(d)}$ is not \'etale over $\Z[1/p]$ (or even over $\Q$). However, the results of \cite[Prop. 16]{arnav15} imply that there is a Galois cover $\pi^{(d)} : X \to X_0(p)^{(d)}$ with Galois group $(\Z/p\Z)^\times/\Delta_{12}$ that is \'etale over $\Z[1/p]$. We believe it should be possible to interpret \Cref{thm:main} and its proof in terms of \'etale descent applied to the cover $\pi^{(d)} : X \to X_0(p)^{(d)}$. Since we are now dealing with $\Q$-rational points and the varieties involved are defined over $\Q$, the characters occurring in the \'etale descent are characters on $\Gal(\overline \Q/\Q)$ rather than on $\Gal(\overline k/k)$. For example, we believe that the proof of part (a) of the theorem could be related to $X^{\chi}$ having local obstructions to rational points for certain characters $\chi$ and $p$ large enough. Similarly, we believe that part (b) could be interpreted as the statement that $X(\Q)$ consists entirely of points mapping to cuspidal points in $X_0(p)^{(d)}(\Q)$ when $p$ is large enough. Since $X(\Q)$ is always non-empty, such a statement cannot be proven purely in terms of local obstructions, which is consistent with the fact that the proof of part (b) is deeper and requires Kamienny and Mazur's strategy of formal immersions in addition to the arguments used in the proof of part (a).

\section{An even more general divisibility criterion}\label{sec:most_general_divisibility}

As explained in \cite[Section 3.2]{banwait2022derickx}, applying \Cref{prop:momose_1_non_galois_md} to the principal ideal $\fq^{h_\fq}$, for a choice $\fq$ of auxiliary prime ideal, allows one to define integers $A$, $B$ and $C$ that $p$ must divide, thereby yielding multiplicative bounds for $p$. We refer the reader to Definition 3.8 and Corollary 3.10 of \cite{banwait2022derickx}. While this makes the ensuing consideration of the non-zero-ness of the integers $A$, $B$ and $C$ easier to navigate, one observes that \Cref{prop:momose_1_non_galois_md} may be applied for \emph{any principal ideal}, not just those which are powers of prime ideals. By taking a general principal ideal $(\alpha)$, we can define an integer $D(\eps, \alpha)$ which also gives a multiplicative bound on $p$, which may be considered a more general divisibility criterion than \cite[Corollary 3.10]{banwait2022derickx}. To make this precise, it will be convenient to first make the following definition.

\begin{definition}
Let $q$ be a power of a rational prime and $L$ be a field of characteristic 0. We define the set
$$S(q,L) := \set{\pm 1,\pm q} \cup \set{\beta \in L \mid \beta \mbox{ is a Frobenius root over }\F_q},$$
where by a Frobenius root over $\F_q$ we mean a root of the characteristic polynomial of Frobenius $\Frob_\fq$ of any elliptic curve over $\F_\fq$.
\end{definition}

We then obtain the following result from \Cref{prop:momose_1_non_galois_md}, analogously to the construction of the $A$, $B$ and $C$ integers.

\begin{proposition}\label{prop:divisibility_bound}
Let $\lambda$ be a $p$-isogeny character over $k$ of type $\eps$ and $\alpha \in k^\times$ a $\fp$-adic unit at all primes $\fp$ above $p$. Suppose the fractional ideal $(\alpha)$ factors as $\prod_{i=1}^r \fq_i^{e_i}$. Then for each $1 \leq i \leq r$ there exists $\beta_i \in S(\Nm(\fq_i), \overline K)$ and a prime ideal $\fp_i$ of $\Q(\beta_i)$ such that
\begin{equation}
 \beta_i \equiv \lambda(\frob_{\fq_i}) \Mod{\fp_i}; 
 \end{equation}
moreover one has that $p$ divides the integer 
\begin{equation}\label{eqn:general_div}
\Nm_{\Q(\alpha^\eps,\beta_1,...,\beta_r)/\Q}\left(\alpha^\eps-\prod_{i=1}^r \beta_i^{12e_i}\right).
\end{equation}
\end{proposition}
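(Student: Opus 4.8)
The plan is to apply \Cref{prop:momose_1_non_galois_md} directly to the principal ideal $(\alpha)$ and then to rewrite the resulting congruence $\mu((\alpha)) \equiv \alpha^{\eps} \pmod{\fp_0}$ in terms of Frobenius data at the primes $\fq_i$ dividing $(\alpha)$. First I would fix, as in the statement of \Cref{prop:momose_1_non_galois_md}, a prime $\fp_0$ of the Galois closure $K$ lying over $p$, giving a signature $\eps = \eps_{\fp_0}$. Since $\mu = \lambda^{12}$ is a character on fractional ideals coprime to $p$, and $(\alpha) = \prod_{i=1}^r \fq_i^{e_i}$, multiplicativity gives
\[
\alpha^{\eps} \equiv \mu((\alpha)) = \prod_{i=1}^r \mu(\frob_{\fq_i})^{e_i} = \prod_{i=1}^r \lambda(\frob_{\fq_i})^{12 e_i} \pmod{\fp_0}.
\]
The left-hand side lives in $\O_K/\fp_0$, and each $\lambda(\frob_{\fq_i})$ is the image of an element of $\F_p^{\times}$ under the chosen isomorphism $\Aut(W(\overline k)) \cong \F_p^{\times}$, so this is an honest identity in a finite field of characteristic $p$.

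The second step is to recognise each $\lambda(\frob_{\fq_i})$ as the reduction of an element of $S(\Nm(\fq_i), \overline K)$. Here I would invoke the standard fact (going back to Mazur and used throughout \cite{banwait2022derickx}) that if $E$ has good reduction at $\fq_i$ (which we may assume after discarding the finitely many bad primes, or by the argument in \cite[Section 3.2]{banwait2022derickx} that the relevant congruence still holds) then $\lambda(\frob_{\fq_i})$ and $\Nm(\fq_i)/\lambda(\frob_{\fq_i})$ reduce to the two roots of the characteristic polynomial of $\Frob_{\fq_i}$ acting on $E \bmod \fq_i$; if $E$ has bad reduction at $\fq_i$ then $\lambda(\frob_{\fq_i}) \in \{\pm 1, \pm \Nm(\fq_i)\}$ by the theory of the Tate curve / the behaviour of the isogeny character at primes of multiplicative or additive reduction. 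Either way there is a Frobenius root (in the broad sense of the definition, including the elements $\pm 1, \pm q$) $\beta_i \in S(\Nm(\fq_i), \overline K)$ and a prime $\fp_i$ of $\Q(\beta_i)$ — obtained by restricting $\fp_0 \cap \Q(\beta_i)$ after embedding $\Q(\beta_i) \hookrightarrow \overline K$ — with $\beta_i \equiv \lambda(\frob_{\fq_i}) \pmod{\fp_i}$, giving the first displayed congruence of the proposition.

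Combining the two steps, reduce the identity modulo the prime of $\Q(\alpha^{\eps}, \beta_1, \ldots, \beta_r)$ lying below $\fp_0$: we get
\[
\alpha^{\eps} - \prod_{i=1}^r \beta_i^{12 e_i} \equiv 0
\]
in the residue field at that prime, hence this algebraic number lies in a prime ideal above $p$, and therefore its norm $\Nm_{\Q(\alpha^{\eps},\beta_1,\ldots,\beta_r)/\Q}\!\bigl(\alpha^{\eps} - \prod_{i=1}^r \beta_i^{12 e_i}\bigr)$ is a rational integer divisible by $p$, which is \eqref{eqn:general_div}. The main thing to be careful about — the only real obstacle — is the bookkeeping of the various prime ideals: one must check that the single prime $\fp_0$ of $K$ simultaneously induces compatible primes on each subfield $\Q(\beta_i)$ and on $\Q(\alpha^{\eps})$, so that all the congruences hold modulo one coherent prime above $p$ rather than modulo unrelated primes. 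Since $K$ is Galois and contains (a conjugate of) every $\Q(\beta_i)$ after suitable embeddings, this is automatic, but it should be spelled out. The good-reduction-versus-bad-reduction dichotomy at the $\fq_i$ is the other point needing a line of justification, but it is entirely parallel to the corresponding step in the construction of the $A$, $B$, $C$ integers in \cite[Section 3.2]{banwait2022derickx}, so I would simply cite that.
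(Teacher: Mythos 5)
Your proposal is correct and follows exactly the route the paper intends: the paper gives no detailed proof of this proposition, stating only that it follows from \Cref{prop:momose_1_non_galois_md} ``analogously to the construction of the $A$, $B$ and $C$ integers'' of \cite[Section 3.2]{banwait2022derickx}, which is precisely your argument of applying the signature congruence to $(\alpha)$, factoring $\mu((\alpha))$ over the $\fq_i$, matching each $\lambda(\frob_{\fq_i})$ with an element of $S(\Nm(\fq_i),\overline K)$ via the good/bad reduction dichotomy, and taking norms. Your added care about choosing all the primes $\fp_i$ compatibly below the single prime $\fp_0$ of $K$ is a worthwhile point that the paper leaves implicit.
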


\minor{24}{
\begin{proof}[Proof (sketch)]
Let $E/k$ be an elliptic curve giving rise to $\lambda$, and fix a prime $\mathcal{P}$ 
of $\overline{K}$ above $p$. Since no $\fq_i$ lies above $p$, considering the reduction 
type of $E$ at $\fq_i$ yields the existence of the $\beta_i$: in the potentially 
multiplicative case one may take $\beta_i \in \set{\pm 1, \pm\Nm(\fq_i)}$ 
(\cite[Proposition~1.4]{david2012caractere}, \cite[Proposition~3.3]{david2011borne}), 
and in the potentially good case one may take $\beta_i$ to be a suitable root of the 
characteristic polynomial of Frobenius at $\fq_i$ 
(\cite[Proposition~1.8]{david2012caractere}, \cite[Proposition~3.6]{david2011borne}); 
in both cases $\beta_i \equiv \lambda(\frob_{\fq_i}) \Mod{\mathcal{P}}$, and one sets 
$\fp_i := \mathcal{P} \cap \Q(\beta_i)$. Applying \Cref{prop:momose_1_non_galois_md} 
to the ideal $(\alpha)$ then gives
\[
\alpha^{\eps} \equiv \prod_{i=1}^{r} \beta_i^{12 e_i} \Mod{\mathcal{P}};
\]
both sides are units at all primes above $p$ (for the $\beta_i$, note that 
$\beta_i\overline{\beta_i} = \Nm(\fq_i)$ is prime to $p$), so taking norms down to 
$\Q$ shows that $p$ divides~\eqref{eqn:general_div}. Finally, the Weil bound leaves 
only finitely many possibilities for each $\beta_i$, so looping over all such choices 
and taking the least common multiple of the resulting values yields \eqref{eqn:general_div}, i.e., a multiplicative bound on $p$ depending only on 
$\eps$ and $\alpha$.
\end{proof}
}

We will usually take the $\lcm$ over all possible $r$-tuples $(\beta_1, \ldots, \beta_n)$ in the Cartesian product $\prod_{i=1}^{r} S(\Nm(\fq_i), \overline K)$ which yields the integer
\begin{equation}\label{eqn:general_div_practical}
D(\eps, \alpha) := \lcm \left(\Nm_{\Q(\alpha^\eps,\beta_1,...,\beta_r)/\Q}\left(\alpha^\eps-\prod_{i=1}^r \beta_i^{12e_i}\right)\right)
\end{equation}
which $p$ must also divide (see Step (3) of \Cref{alg:b_ints}). We remark that this integer could be zero, so in practice we will ignore terms in the product that are zero; i.e. we will consider the integer
\begin{equation}\label{eqn:general_div_practical_nonzero}
D^\ast(\eps, \alpha) := \lcm^\ast \left(\Nm_{\Q(\alpha^\eps,\beta_1,...,\beta_r)/\Q}\left(\alpha^\eps-\prod_{i=1}^r \beta_i^{12e_i}\right)\right).
\end{equation}
\minor{25/26}{
That this $\lcm^\ast$ is taken over a nonempty set --- equivalently, that
$D^\ast(\eps,\alpha)$ is a well-defined nonzero integer --- is the content of
the following result.

\begin{lemma}\label{lem:Dstar_nonzero}
Suppose $(\alpha) \neq (1)$. Then at least one of the terms in
\Cref{eqn:general_div_practical} is nonzero.
\end{lemma}

\begin{proof}
Since $\pm 1$ and $\pm\Nm(\fq_i)$ belong to $S(\Nm(\fq_i), \overline{K})$, we may
take $\beta_i = 1$ for all $i$, or $\beta_i = \Nm(\fq_i)$ for all $i$. The
corresponding terms in \Cref{eqn:general_div_practical} are
\[
\Nm_{\Q(\alpha^\eps)/\Q}\left(\alpha^\eps - 1\right)
\qquad \text{and} \qquad
\Nm_{\Q(\alpha^\eps)/\Q}\left(\alpha^\eps - \Nm\!\left((\alpha)\right)^{12}\right),
\]
which vanish if and only if $\alpha^\eps = 1$, respectively
$\alpha^\eps = \Nm((\alpha))^{12}$. Since $(\alpha) \neq (1)$ we have
$\Nm((\alpha))^{12} > 1$, so these conditions cannot both hold.
\end{proof}

We thereafter deal separately with the case that one of the terms in
\Cref{eqn:general_div_practical} is zero.
}

\section{Strongly uniform bounds and the proof of \texorpdfstring{\Cref{thm:main}}{the main theorem.}} \label{sec:divisibility_conditions}

We begin this section by providing an algorithm to compute the integers $B_{\eps,q}$ 
mentioned in the Introduction. The essential point is simply that when $\alpha \in \Q$ 
the integer $D(\eps, \alpha)$ from \Cref{eqn:general_div_practical} depends only on 
the splitting type of $\alpha$ in $k$ (i.e. the number $r$ of distinct prime ideals 
dividing $(\alpha)$, the ramification indices $e_i$, and the residue field degrees 
$f_i$), and consequently the integer $B_{\eps,q}$ is constructed as the $\lcm$ of the 
integers $D(\eps, q)$, one for each of the finitely many possible splitting types; 
the auxiliary integer $B^\ast_{\eps,q}$ is obtained in the same way from the 
$D^\ast(\eps, q)$.

\begin{algorithm}\label{alg:b_ints}
Given the following inputs:

\begin{itemize}
    \item an integer $d \geq 1$;
    \item a $d$-tuple $\eps \in \left\{0,4,6,8,12\right\}^d$;
    \item a rational prime $q$,
\end{itemize}
compute two integers $B_{\eps,q}$ and $B^\ast_{\eps,q}$ as follows.
\begin{enumerate}
    \item Enumerate the possible splitting types of a rational prime in a degree $d$ number field; i.e. enumerate the set of tuples
    \[ S_d := \left\{ (r,e_1,\ldots,e_r, f_1, \ldots, f_r) : 1 \leq r \leq d, \ \sum_{i=1}^{r}e_if_i = d \right\}. \]
    \item\label{step:one_norm} For each splitting type $s = (r,e_1,\ldots,e_r, f_1, \ldots, f_r)$ 
    in $S_d$, and for each $r$-tuple $\beta = (\beta_1,\ldots,\beta_r)$ in the cartesian 
    product $\prod_{i=1}^{r} S(q^{f_i}, \overline K)$, compute the norm
    \[ B_{\eps,q,s,\beta} := \Nm_{\Q(\beta_1,...,\beta_r)/\Q}\left(q^{\trace \eps}-\prod_{i=1}^r \beta_i^{12e_i}\right). \]
    \item\label{step:per_splitting_lcm} For each $s$ in $S_d$, set
    \[ B^\ast_{\eps,q,s} := \lcm^\ast_{\beta}\left( B_{\eps,q,s,\beta} \right), \]
    the least common multiple of the \textbf{nonzero} norms computed in the previous 
    step for this splitting type, and set $B^\ast_{\eps,q}$ to be the least common 
    multiple of the $B^\ast_{\eps,q,s}$ over all $s$ in $S_d$.
    \item If any of the integers in Step~\ref{step:one_norm} are zero, then set $B_{\eps,q} = 0$; otherwise set $B_{\eps,q} = B^\ast_{\eps,q}$.
\end{enumerate}
\end{algorithm}

\begin{remark}\label{rem:b_star_nonzero}
Step~\ref{step:per_splitting_lcm} is well defined: for each splitting type $s$, at 
least one of the norms $B_{\eps,q,s,\beta}$ is nonzero. Indeed, as in the proof of 
\Cref{lem:Dstar_nonzero}, the tuples $\beta = (1, \ldots, 1)$ and 
$\beta = (q^{f_1}, \ldots, q^{f_r})$ yield the terms $q^{\trace\eps} - 1$ and 
$q^{\trace\eps} - q^{12d}$ respectively, which vanish if and only if 
$\trace \eps = 0$, respectively $\trace\eps = 12d$; these cannot both hold. 
Consequently each $B^\ast_{\eps,q,s}$, and hence also $B^\ast_{\eps,q}$, is a 
nonzero integer. The intermediate integers $B^\ast_{\eps,q,s}$ reflect the fact 
that, for a fixed number field $k$, the splitting type of $q$ in $k$ is determined 
rather than chosen: in the notation of \Cref{sec:most_general_divisibility}, one has 
$B^\ast_{\eps,q,s} = D^\ast(\eps, q)$ for $s$ the splitting type of $q$ in $k$, and 
it is therefore essential that $B^\ast_{\eps,q,s}$ be nonzero for \emph{every} $s$, 
not merely for some $s$.
\end{remark}

This Algorithm is implemented in the function \path{B_eps_q} in \path{sage_code/strong_uniform_bounds.py}.

As discussed at the end of \Cref{sec:most_general_divisibility}, the integer $B_{\eps,q}$ is a multiplicative bound for $p$-isogenies of signature $\eps$, which may be zero, and hence trivial. If it is nonzero, then we obtain an additive bound on $p$ as follows.

\begin{proposition}\label{prop:gen_additive_bound}
    Let $\eps$ be an isogeny signature of degree $d$, and let $q$ be prime. Suppose that $p \mid B^*_{\eps,q}$. Then \[ p \leq (q^{\trace \eps} + q^{12d})^{2^d}. \]
\end{proposition}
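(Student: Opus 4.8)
The plan is to bound each of the norms $B_{\eps,q,\beta}$ appearing in Step~\ref{step:one_norm} of \Cref{alg:b_ints}, and then conclude by observing that $p$ divides one of them (or divides $B^*_{\eps,q}$, which is an $\lcm$ of such norms). First I would fix a splitting type $(r,e_1,\ldots,e_r,f_1,\ldots,f_r) \in S_d$ and an $r$-tuple $\beta = (\beta_1,\ldots,\beta_r) \in \prod_i S(q^{f_i},\overline K)$, and estimate the complex absolute value of each conjugate of the algebraic integer $q^{\trace\eps} - \prod_i \beta_i^{12e_i}$. The key input is that every element of $S(q^{f_i},\overline K)$ has absolute value at most $q^{f_i/2}$ under any archimedean embedding: for $\pm 1, \pm q$ this is clear (since $f_i \geq 1$), and for a Frobenius root of an elliptic curve over $\F_{q^{f_i}}$ it is the Hasse bound $|\beta_i| = q^{f_i/2}$. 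Hence $\left|\prod_{i=1}^r \beta_i^{12e_i}\right| \leq \prod_i q^{6 e_i f_i} = q^{6\sum_i e_i f_i} = q^{6d}$ under any embedding of $\Q(\beta_1,\ldots,\beta_r)$ into $\C$.

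Next I would combine this with the triangle inequality: each archimedean conjugate of $q^{\trace\eps} - \prod_i \beta_i^{12e_i}$ has absolute value at most $q^{\trace\eps} + q^{6d} \leq q^{\trace\eps} + q^{12d}$. The field $\Q(\beta_1,\ldots,\beta_r)$ is generated by $r \leq d$ elements each lying in a field of degree at most $2$ over $\Q$ (the $\pm 1, \pm q$ contribute nothing, and a Frobenius root generates an imaginary quadratic field or $\Q$), so $[\Q(\beta_1,\ldots,\beta_r):\Q] \leq 2^r \leq 2^d$. Therefore the norm $B_{\eps,q,\beta}$, being a product of at most $2^d$ archimedean conjugates, satisfies
\[ \left| B_{\eps,q,\beta} \right| \leq \left(q^{\trace\eps} + q^{12d}\right)^{2^d}. \]

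Finally, in the hypothesis $p \mid B^*_{\eps,q}$ or ($B_{\eps,q}\neq 0$ and $p \mid B_{\eps,q}$), in either case $p$ divides some \emph{nonzero} integer $B_{\eps,q,\beta}$ computed in Step~\ref{step:one_norm}: when $B_{\eps,q}\neq 0$ it equals $B^*_{\eps,q}$, which is the $\lcm$ of the nonzero norms, and $p$ divides an $\lcm$ of integers only if it divides one of them; the same holds for $B^*_{\eps,q}$ directly. A prime dividing a nonzero integer $N$ satisfies $p \leq |N|$, so $p \leq \left(q^{\trace\eps} + q^{12d}\right)^{2^d}$, as claimed. The only mild subtlety — the ``main obstacle'', though it is minor here — is being careful that the relevant norm is genuinely nonzero so that the divisibility $p \mid N$ forces $p \leq |N|$ rather than being vacuous; this is exactly why the hypothesis is phrased in terms of $B^*_{\eps,q}$ or a nonvanishing $B_{\eps,q}$, and why \Cref{prop:divisibility_bound} (applied with $\alpha = q$) guarantees that $p$ divides such a norm in the first place.
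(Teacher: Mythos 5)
Your argument is essentially the paper's own proof: reduce to $p \mid B_{\eps,q,\beta}$ for some nonzero norm produced in Step~(2) of \Cref{alg:b_ints}, bound each archimedean conjugate of $q^{\trace \eps} - \prod_i \beta_i^{12e_i}$ by the triangle inequality, and use $[\Q(\beta_1,\ldots,\beta_r):\Q] \leq 2^d$. One intermediate claim is false, though harmlessly so: it is not true that every element of $S(q^{f_i},\overline{K})$ has absolute value at most $q^{f_i/2}$, since by definition the set contains $\pm q^{f_i}$, of absolute value $q^{f_i}$ (already for $f_i=1$ one has $|\pm q| = q > q^{1/2}$); the correct uniform bound is $|\beta_i| \leq q^{f_i}$, which yields $\left|\prod_i \beta_i^{12e_i}\right| \leq q^{12d}$ rather than your $q^{6d}$. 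Since you then relax $q^{6d}$ to $q^{12d}$ anyway, the stated bound $(q^{\trace \eps} + q^{12d})^{2^d}$ still follows, and this is exactly the estimate the paper uses.
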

Note that if  $B_{\eps,q} \neq 0$ then $B^*_{\eps,q} = B_{\eps,q}$ so that in this case we get the same bound on $p$ as well.
\begin{proof}
The condition $p \mid B^*_{\eps,q}$ implies the existence of a splitting type 
\[s = (r,e_1,\ldots,e_r,f_1,\ldots,f_r)\] in $S_d$ and a 
$\beta=(\beta_1,\ldots,\beta_r)$ as in \cref{alg:b_ints} step \ref{step:one_norm} 
such that $B_{\eps,q,s,\beta} \neq 0$ and $p \mid B_{\eps,q,s,\beta}$.
    We know that $|\beta_i| \leq q^{f_i}$ so that $\left|\prod_{i=1}^r \beta_i^{12e_i}\right| \leq q^{12d}$. Since there are at most $d$ distinct primes in $K$ dividing $q$ we also know $[ \Q(\beta_1,...,\beta_r):\Q] \leq 2^d$.  Hence $p \leq |B_{\eps,q,s,\beta}| \leq  (q^{\trace \eps} + q^{12d})^{2^d}$.
\end{proof}

In the rest of this Section we give the proof of \Cref{thm:main}. We begin with part (b).

\begin{proposition}\label{prop:uniform_type_1_bound}
Let $k$ be a number field of degree $d$, $E/k$ an elliptic curve admitting a $k$-rational $p$-isogeny of signature $\eps$ of type $1$, and $q \geq 3$ a rational prime different from $p$. Then either $p$ divides the nonzero integer $ B^\ast_{\eps,q}$ or $p \in \BADFI(q,d,1)$. In particular, \begin{align*}
p &\leq (3^{12d}+1)^{2^d} \text{ and }\\
p &\mid B_{\eps,q}^{\ast \ast} :=  B_{\eps,q}^{ \ast} \prod \BADFI(q,d,1). 
\end{align*}
\end{proposition}

\begin{proof} Recall that the Type 1 signatures are $(0,\ldots,0)$ and $(12,\ldots,12)$; and the dual isogeny swaps these signatures. Therefore without loss of generality, we may work only with $\eps = (0,\ldots,0)$ henceforth in the proof.

The integer stated is clearly nonzero, so we need to establish that either $p$ divides it or $p \in \BADFI(q,d,1)$. If we replaced $B^\ast_{\eps,q}$ with $B_{\eps,q}$, then this would follow from \Cref{prop:divisibility_bound} by taking $\alpha=q$; we thus consider the case that an integer computed in step \ref{step:one_norm} is zero. What remains is to show $p \in \BADFI(q,d,1)$ in this case. We have \[ \prod_{i=1}^r\beta_i^{12e_i} = 1\]
for some splitting type $(r,e_1,\ldots,e_r, f_1, \ldots, f_r)$. The only way this can happen is if all of the $\beta_i$ are equal to $\pm 1$; indeed all the Frobenius roots here have norm a power of $q$ \minor{27}{at least $1/2$}. In particular $E$ has potentially multiplicative reduction at all $\fq_i$. Furthermore, as shown in \Cref{lem:frobenius_at_cusps}, $\beta_i = \pm 1$ corresponds to the cusp 0; therefore writing $x$ for the corresponding $k$-point on $X_0(p)$, we have that $x$ reduces to $0$ at all $\fq_i$. In particular, condition $(1)$ of \Cref{prop:parent} is satisfied and \Cref{thm:parent_refined} shows that $p \in \BADFI(q,d,1)$.

Finally we establish the asserted additive bound on $p$ by specializing to $q=3$. If $p \mid B_{\eps,q}^*$, then we obtain $$p \leq (3^{12d}+1)^{2^d}$$ from \Cref{prop:gen_additive_bound}. Now $\max(\BADFI(3,d,1)) \leq 65(2\max(d,3))^6$, by \cite[Thm. 1.8 and Prop. 1.9]{parent1999bornes}, and $65(2\max(d,3))^6 \leq (3^{12d}+1)^{2^d}$. So if $p \in \BADFI(3,d,1)$ then $p \leq (3^{12d}+1)^{2^d}$ as well.
\end{proof}

\begin{remark}\label{rem:algo_implementation}
    Since \Cref{prop:uniform_type_1_bound} gives a multiplicative bound for every $q \geq 3$, in practice one may compute the greatest common divisor of these bounds for several $q$. This occurs in the \path{core_loop} method in \path{strong_uniform_bounds.py}, and is how the small lists of primes in \Cref{tab:isogeny_merel_quadratic} were obtained. Additionally, one may in fact make use of $q = 2$ as explained in \cite[Remark 5.11]{banwait2022derickx}, which further reduces the list of possible isogeny primes. 
\end{remark} 

We obtain the following corollary which may be considered a \emph{bound on unramified torsion primes}; this is \Cref{cor:main} from the Introduction.

\begin{corollary}\label{cor:main_in_body}
Let $d \geq 1$ be an integer, and let $E$ be an elliptic curve over a number field $k$ of degree $d$. If $E$ attains a torsion point of prime order $p$ rational over an extension of $k$ that is unramified at all primes of $k$ above $p$, then  \[p \leq (3^{12d}+1)^{2^d}.\]
\end{corollary}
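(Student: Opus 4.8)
The plan is to reduce the statement to the Type~1 case already settled in \Cref{prop:uniform_type_1_bound}. Enlarging the field if necessary, we may assume $E$ acquires its point $P$ of order $p$ over $k'$, the maximal extension of $k$ unramified at all primes of $k$ above $p$; thus $k'/k$ is Galois and $G_{k'}$ is the closed normal subgroup of $G_k$ generated by the inertia groups $I_\fp$ at the primes $\fp \mid p$ of $k$. In terms of the mod-$p$ Galois representation $\overline\rho\colon G_k\to\Aut(E[p])\cong\GL_2(\F_p)$, the hypothesis says exactly that the nonzero vector $P$ is fixed by $G_{k'}$. Since the asserted bound exceeds $13$, we may assume throughout that $p>13$, so that the signature in \Cref{prop:momose_1_non_galois_md} is unique.

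I would first dispose of the case that $\overline\rho$ is irreducible. Here $\overline\rho(G_{k'})$ is a normal subgroup of the irreducible group $\overline\rho(G_k)$ fixing $P$; since $\overline\rho$ is irreducible $\langle P\rangle$ is not $G_k$-stable, so there is $h\in\overline\rho(G_k)$ with $hP\notin\langle P\rangle$, and $\overline\rho(G_{k'})=h\,\overline\rho(G_{k'})\,h^{-1}$ then also fixes $hP$; so it fixes the basis $\{P,hP\}$ and is trivial. Consequently $\overline\rho(I_\fp)=\id$ for all $\fp\mid p$, so the mod-$p$ cyclotomic character $\chi=\det\overline\rho$ is trivial on each $I_\fp$; since $\chi$ is tamely ramified, $\chi\vert_{I_\fp}$ has order $(p-1)/\gcd(p-1,e(\fp/p))$, whence $p-1\mid e(\fp/p)\le d$ and $p\le d+1$. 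This lies well within the claimed bound, and such $p$ divide the product of the finitely many, $d$-dependent primes $\le d+1$, which we fold into the integer of \Cref{prop:uniform_type_1_bound}.

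It remains to treat $\overline\rho$ reducible, so that $E$ admits a $k$-rational $p$-isogeny; let $L'$ be the stable line and $\lambda$ its isogeny character. The crux is that the signature $\eps$ of $\lambda$ is of Type~1, after which \Cref{prop:uniform_type_1_bound} with $q=3$ gives the divisibility and the bound $p\le\max\bigl(65(2d)^6,(3^{12d}+1)^{2^d}\bigr)$ verbatim. To identify $\eps$ one records the shape of $\overline\rho\vert_{I_\fp}$ for $\fp\mid p$: it fixes $P$ and stabilises $L'$. If $\langle P\rangle=L'$, then $\lambda\vert_{I_\fp}$ is trivial for all $\fp\mid p$, so $\mu=\lambda^{12}$ is unramified at $p$ and $\eps=(0,\dots,0)$. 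If $\langle P\rangle\neq L'$, then in a basis $(P,e')$ with $e'$ spanning $L'$ one gets $\overline\rho(\tau)=\mathrm{diag}(1,\chi(\tau))$ for $\tau\in I_\fp$, so $\lambda\vert_{I_\fp}=\chi\vert_{I_\fp}$, and reading this through \Cref{prop:momose_1_non_galois_md}, where now $\mu((\alpha))$ reduces to $\Nm_{k/\Q}(\alpha)^{12}$ modulo $\fp_0$, gives $\eps=(12,\dots,12)$. (If $\overline\rho$ splits with two stable lines both distinct from $\langle P\rangle$, the same computation forces $\overline\rho(I_\fp)=\id$, putting us back in the $p\le d+1$ case.) Either way $\eps$ is of Type~1, and combining with the previous paragraph finishes the proof.

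The main obstacle I anticipate is not the easy group theory but the two local ingredients: first, that the restriction of the mod-$p$ cyclotomic character to $I_\fp$ has order exactly $(p-1)/\gcd(p-1,e(\fp/p))$, so that its triviality forces $p\le d+1$; and second, the careful passage — via \Cref{prop:momose_1_non_galois_md} — from the purely local statement ``$\lambda$ is trivial, resp.\ equal to $\chi$, on every inertia group above $p$'' to the global conclusion that the signature $\eps$ equals $(0,\dots,0)$, resp.\ $(12,\dots,12)$, which rests on the signature being precisely the collection of inertial weights at $p$.
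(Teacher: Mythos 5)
Your argument is correct and reduces, as the paper's does, to the Type~1 bound of \Cref{prop:uniform_type_1_bound} plus a degenerate case where $p\le d+1$; but the case decomposition is genuinely different. The paper splits on whether the line $\langle P\rangle$ is $G_k$-stable: if it is, the isogeny character is unramified at all $\fp\mid p$ and the signature is $(0,\dots,0)$; if not, some Galois conjugate $\sigma(P)$ together with $P$ generates $E[p]$, so $E[p]\subseteq E(L)$, the Weil pairing forces $\zeta_p\in L$, and the unramifiedness hypothesis caps the ramification index of $p$ by $d$, giving $p-1\le d$. You instead split on irreducibility of $\overline\rho$: in the irreducible case your normal-subgroup argument trivialises $\overline\rho(I_\fp)$ and you reach $p\le d+1$ through the order of $\chi$ on inertia (the same cyclotomic mechanism as the paper's Weil-pairing step, phrased locally); in the reducible case you carry out a finer inertia computation that also covers the configuration where $\langle P\rangle$ is \emph{not} the stable line, landing on signature $(12,\dots,12)$ rather than exiting to the $p\le d+1$ bound as the paper does there. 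Both routes are valid; yours keeps more cases inside the Type~1 framework (at the cost of invoking the standard but unstated fact that the signature entries are read off from $\lambda\vert_{I_\fp}$ — the same fact the paper uses implicitly for the $(0,\dots,0)$ case), while the paper's Weil-pairing dichotomy is shorter and avoids choosing bases for $\overline\rho\vert_{I_\fp}$.
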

\begin{proof}
\minor{28}{We may assume that $p > 12d + 1$, since otherwise the stated bound holds trivially. 
Let $L$ be the extension as in the statement. By replacing $L$ with its Galois closure 
over $k$ --- a compositum of $k$-conjugates of $L$, hence still unramified at all 
primes of $k$ above $p$ --- we may assume that $L/k$ is Galois. Let $P$ be the 
$L$-rational $p$-torsion point, and consider the subgroup $\langle P \rangle$ 
generated by $P$.

Suppose first that $\langle P \rangle$ is $k$-rational, so that we have a $k$-rational 
$p$-isogeny, whose isogeny character $\lambda$ satisfies $\sigma(P) = \lambda(\sigma)P$ 
for all $\sigma \in \Gal(\overline{k}/k)$. The fixed field of $\ker\lambda$ is $k(P)$, 
which is contained in $L$, so $\lambda$ factors through $\Gal(L/k)$. Since $L/k$ is 
unramified at all primes of $k$ above $p$, the character $\lambda$ --- and hence also 
$\mu = \lambda^{12}$ --- restricts trivially to the inertia subgroup $I_\fp$ for every 
prime $\fp$ of $k$ above $p$. On the other hand, by 
\cite[Proposition~3.1]{banwait2022derickx} (applicable since $p \geq 5$), we have 
$\mu|_{I_\fp} = \theta_{p-1}^{a_\fp}$ for some integer $0 \leq a_\fp \leq 12e_\fp$. 
Since $p - 1 > 12d \geq 12e_\fp$, the integer $a_\fp$ is uniquely determined by 
$\mu|_{I_\fp}$ by \cite[Remark~3.2]{banwait2022derickx}; as $\mu|_{I_\fp}$ is trivial, 
we conclude that $a_\fp = 0$. Thus $\lambda$ has signature $(0,\ldots,0)$, and 
\Cref{prop:uniform_type_1_bound} yields the stated bound.

If $\langle P \rangle$ is not $k$-rational, then there is some 
$\sigma \in \Gal(L/k)$ such that $P$ and $\sigma(P)$ together generate $E[p]$, whence 
$E[p] \subset E(L)$. The standard corollary of the Galois equivariance of the Weil 
pairing then gives $\zeta_p \in L$, and so the ramification index of $p$ in $L$ is at 
least $p-1$. But by assumption all of this ramification occurs in the tower $k/\Q$, 
so $p - 1 \leq d$, contradicting $p > 12d + 1$.}
\end{proof}

Having bounded the primes as in the above corollary, one may apply a standard argument of Kamienny and Mazur to bound all such torsion orders, albeit at the cost of losing the explicitness of the bound. The basic idea is explained in \cite[Section 6]{edixhoven1993rational}; slightly adapting the argument for our purposes yields the following result, which is \Cref{cor:main_composite} from the Introduction.

\begin{corollary}\label{cor:unramified_boundedness}
For every integer $d$ there exists a constant $A_d$ such that, if $k$ is a number field of degree $d$ and $E/k$ is a non-CM elliptic curve that obtains a rational torsion point of order $N$ over an extension of $k$ unramified at all primes dividing $N$, then $N \leq A_d$.
\end{corollary}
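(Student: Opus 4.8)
The plan is to feed the prime-degree bound of \Cref{cor:main_in_body} into the classical Kamienny--Mazur reduction from prime torsion to arbitrary torsion, adapted to the situation where the torsion is only realised over an unramified extension. Fix $d$, let $E/k$ be non-CM with $[k:\Q]=d$, and let $P\in E(L)$ have order $N$ with $L/k$ unramified above every prime dividing $N$. For each prime $p\mid N$ the point $(N/p)P$ has order $p$ and is defined over $L$, which is unramified above $p$, so \Cref{cor:main_in_body} gives $p\le P_d:=\max(65(2d)^6,(3^{12d}+1)^{2^d})$. Hence $N=\prod_{p\le P_d}p^{n_p}$, and it suffices to bound each $n_p$ by a constant depending only on $d$; one may then take $A_d:=\prod_{p\le P_d}p^{n_p(d)}$. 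So fix a prime $p\le P_d$ and a point $x\in E(L)$ of exact order $p^{n}$ with $L/k$ unramified above $p$. Replacing $L$ by the Galois closure of $L/k$ keeps it unramified above $p$, since that closure is the compositum of the $k$-conjugates of $L$, each unramified above $p$, and a compositum of extensions of $k$ unramified at a given prime is again unramified there; so we may assume $L/k$ is Galois, and then $\Gal(L/k)$ acts on $E(L)$.

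The heart of the matter is to trade $x$ for a $k$-rational cyclic isogeny. Put $C:=\bigcap_{\sigma\in\Gal(L/k)}\sigma\langle x\rangle$, the largest $G_k$-stable subgroup of the cyclic group $\langle x\rangle$; it is cyclic, of order $p^{c}$ say, and $E\to E/C$ is a $k$-rational cyclic $p^{c}$-isogeny. If $c=n$ then $E$ already carries a $k$-rational cyclic $p^{n}$-isogeny. If $c<n$, set $m:=n-c$ and consider the subgroups $D_\sigma:=\sigma\langle x\rangle/C\subseteq(E/C)[p^{m}]$: each is cyclic of order $p^{m}$, they satisfy $\bigcap_\sigma D_\sigma=0$, and they all lie in $(E/C)(L)$. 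If no two of the $D_\sigma$ were transverse they would all reduce to one common line modulo $p$, whence $\bigcap_\sigma D_\sigma$ would contain the common nonzero order-$p$ subgroup $p^{m-1}D_\sigma$, a contradiction; so two of them meet only in $0$, and two transverse cyclic subgroups of order $p^{m}$ of $(\Z/p^{m})^2$ generate all of it. Thus $(E/C)[p^{m}]\subseteq(E/C)(L)$, so by Galois-equivariance of the Weil pairing $\Q(\zeta_{p^{m}})\subseteq L$; since $p$ is totally ramified in $\Q(\zeta_{p^{m}})/\Q$ with index $p^{m-1}(p-1)$ while $L/k$ is unramified above $p$, this forces $p^{m-1}(p-1)$ to divide the ramification index of some prime of $k$ above $p$, which is at most $d$. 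Hence $m=n-c$ is bounded in terms of $d$ and $p$ alone. In all cases, then, $n\le c_0(d,p)+C(d,p)$ for an explicit $C(d,p)$, where $c_0(d,p)$ is the largest $c$ such that some non-CM elliptic curve over a number field of degree $d$ admits a cyclic $p^{c}$-isogeny over that field.

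It remains to bound $c_0(d,p)$. Here I would invoke the standard machinery. The gonality of $X_0(p^{c})$ over $\overline{\Q}$ grows at least linearly in the index $[\SL_2(\Z):\Gamma_0(p^{c})]=p^{c-1}(p+1)$ (Abramovich), so for $c$ large it exceeds $2d$, and then Frey's theorem forces $X_0(p^{c})$ to have only finitely many points of degree $\le d$. If $c_0(d,p)$ were infinite we would, for every $c$, obtain a non-CM, non-cuspidal point of degree $\le d$ on $X_0(p^{c})$; these map compatibly down the tower, so a K\"onig's-lemma argument produces a point of $\varprojlim_c X_0(p^{c})(\overline{\Q})$ whose field of definition stabilises at some $F$ with $[F:\Q]\le d$ --- equivalently, a non-CM elliptic curve $E/F$ whose mod-$p^{\infty}$ Galois representation has image contained in a Borel subgroup of $\GL_2(\Z_p)$, contradicting Serre's open image theorem. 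Hence $c_0(d,p)<\infty$, which completes the proof.

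The constant $A_d$ produced this way is not effective, since Frey's finiteness and Serre's theorem are used non-constructively; this is the loss of explicitness anticipated in the statement. I expect the main obstacle to be exactly this last step: passing from the curve-by-curve bound of Serre's theorem to one uniform over all non-CM $E/k$ --- the compactness argument is what supplies the uniformity, and it is also where the non-CM hypothesis is indispensable, since over number fields of degree $d$ large enough to contain suitable ring class fields, CM elliptic curves admit $k$-rational cyclic $p^{c}$-isogenies of arbitrarily large degree while still having bounded torsion.
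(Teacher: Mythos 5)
Your proposal is correct and follows essentially the same route as the paper: bound the primes dividing $N$ via \Cref{cor:main_in_body}, split each exponent into a piece controlled by the Weil pairing plus the unramifiedness hypothesis (forcing $(p-1)p^{m-1}\le d$) and the exponent of a $k$-rational cyclic $p$-power isogeny, then bound the latter via Abramovich's gonality bound, Frey's theorem, and Serre's open image theorem. The only cosmetic differences are that you build the $k$-rational cyclic subgroup directly as $\bigcap_{\sigma}\sigma\langle x\rangle$ where the paper invokes \Cref{lem:isogeny_from_torsion} (following Daniels et al.), and that you obtain uniformity over the finitely many low-degree points of $X_0(p^{f})$ by an inverse-limit compactness argument rather than by applying Serre's theorem to each of the finitely many representative curves in turn.
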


Before starting the proof of this result, we require a preparatory lemma. The following result generalises Lemma 4.6 of \cite{daniels2018torsion}.

\begin{lemma}\label{lem:isogeny_from_torsion}
    Let $E/k$ be an elliptic curve over a number field $k$, let $p$ be a prime, and let $F$ be a (possibly infinite) Galois extension of $k$ that contains only finitely many $p$\textsuperscript{th}-power roots of unity. Let $p^a$ be the order of the largest  $p$\textsuperscript{th}-power root of unity. Then:

    \begin{enumerate}
        \item there is a largest integer $g$ for which $E[p^g] \subseteq E(F)$, and moreover $g \leq a$.
        \item If $E(F)_{tors}$ contains a subgroup isomorphic to $\Z/p^g\Z \oplus \Z/p^j\Z$ with $j \geq g$, then $E$ admits a $k$-rational cyclic $p^{j - g}$-isogeny.
    \end{enumerate}
\end{lemma}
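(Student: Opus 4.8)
The plan is to handle the two parts in sequence, with part (1) providing the bound that makes part (2) meaningful. For part (1), the key input is the Galois equivariance of the Weil pairing: if $E[p^g] \subseteq E(F)$, then the pairing $e_{p^g} : E[p^g] \times E[p^g] \to \mu_{p^g}$ is surjective and $G_F$-equivariant, and since $G_F$ acts trivially on the left-hand side it must act trivially on $\mu_{p^g}$, i.e.\ $\mu_{p^g} \subseteq F$. By hypothesis $F$ contains only finitely many $p$-power roots of unity, the largest being of order $p^a$, so $g \leq a$. This simultaneously shows the set of $g$ with $E[p^g] \subseteq E(F)$ is bounded (and nonempty, containing $g = 0$), hence has a maximum, and that this maximum is $\leq a$.

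For part (2), suppose $E(F)_{\mathrm{tors}}$ contains $\Z/p^g\Z \oplus \Z/p^j\Z$ with $j \geq g$. Inside this subgroup, consider the subgroup $C$ of elements killed by $p^{j-g}$ lying in the ``second'' cyclic factor — more precisely, write the subgroup as $\langle P \rangle \oplus \langle Q \rangle$ with $P$ of order $p^g$ and $Q$ of order $p^j$, and take $C = \langle p^g Q \rangle$, which is cyclic of order $p^{j-g}$. The point I would stress is that $C$ can be characterized intrinsically inside $E[p^j]$ in a way that is manifestly Galois-stable: $C$ is the image of the multiplication-by-$p^g$ map on the full $F$-rational $p^j$-torsion available, or alternatively $C = p^g\bigl(E(F)[p^j]\bigr)$. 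Since $E(F)[p^j]$ is a $G_k$-stable subgroup of $E[p^j]$ (as $F/k$ is Galois, $G_k$ permutes $F$-rational points), its image under the $G_k$-equivariant map $[p^g]$ is again $G_k$-stable; and by part (1), $E(F)[p^j] \cong \Z/p^g\Z \oplus \Z/p^j\Z$ exactly (no larger, since $g$ is maximal), so $p^g\bigl(E(F)[p^j]\bigr)$ is cyclic of order $p^{j-g}$. A $k$-rational cyclic subgroup of order $p^{j-g}$ is precisely the kernel of a $k$-rational cyclic $p^{j-g}$-isogeny, which gives the conclusion.

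The main obstacle — and the place I would be most careful — is verifying that $p^g\bigl(E(F)[p^j]\bigr)$ is honestly cyclic of order exactly $p^{j-g}$ and not something smaller or larger; this is where the maximality of $g$ from part (1) is essential. One needs to know the group structure of $E(F)[p^j]$: it is of the form $\Z/p^{g'}\Z \oplus \Z/p^{j'}\Z$ with $g' \leq j'$, we are told $g' \geq g$ and $j' \geq j$ (it contains the displayed subgroup), but also $g' \leq g$ by maximality of $g$ (if $g' > g$ then $E[p^{g+1}] \subseteq E(F)$), so $g' = g$; and $j' \le j$ is forced because $E(F)[p^j]$ is killed by $p^j$. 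Hence $E(F)[p^j] \cong \Z/p^g\Z \oplus \Z/p^j\Z$ on the nose, and multiplication by $p^g$ kills the first factor and carries the second onto a cyclic group of order $p^{j-g}$. The comparison with Daniels--Najman, Lemma~4.6 of \cite{daniels2018torsion}, is that they work over a fixed field where $g$ is pinned down by the roots of unity present; here I would phrase everything so that $F$ may be infinite (e.g.\ a maximal unramified-at-$p$ extension) and the only finiteness used is the finiteness of $p$-power roots of unity in $F$, which is exactly what the ramification hypothesis in Corollary~\ref{cor:unramified_boundedness} supplies.
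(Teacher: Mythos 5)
Your proof is correct and follows essentially the same route as the paper: part (1) is the identical Weil-pairing argument, and for part (2) the paper simply defers to \cite[Lemma 4.6]{daniels2018torsion}, whose standard argument (that $p^g\bigl(E(F)[p^j]\bigr)$ is a Galois-stable cyclic subgroup of order exactly $p^{j-g}$, using the maximality of $g$ to pin down the structure of $E(F)[p^j]$) is precisely what you have written out. Your version has the minor merit of being self-contained where the paper is not.
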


\begin{proof}
    Part (1) follows immediately from the usual consequence of the existence of the Weil pairing that if $E[p^g] \subseteq E(F)$ for some $g$, then $\Q(\zeta_{p^g})$ is a subfield of $F$; so by assumption on $F$, there is a largest such $g$.

    Part (2) is proved exactly as in \cite[Lemma 4.6]{daniels2018torsion} (where $k = \Q$), to which we refer the reader. (The restriction on $k$ is only needed for an additional conclusion about the possible values of $j - g$ which we are here not concerned with.)
\end{proof}

\begin{proof}[Proof of \Cref{cor:unramified_boundedness}]
    \Cref{cor:main} yields the finitely many primes $p_1, \ldots, p_n$ that can divide $N$, so we only need to find, for each $p_i$, an upper bound on the exponent $e_i$ for which there can be a non-CM elliptic curve that obtains a rational torsion point of order $p_i^{e_i}$ over such an extension. In the remainder of the proof we drop the subscript $i$ and work simply with $p$ and $e$.

    The first step is to construct a $k$-rational cyclic isogeny of degree a power of $p$, say $p^{h}$, from the torsion point of order $p^{e}$. We do this by applying \Cref{lem:isogeny_from_torsion}. Let $F$ denote the extension of $k$ unramified at all primes dividing $p$ over which $E$ attains a rational torsion point of order $p^e$ as in the statement of the Corollary. As in the proof of \Cref{cor:main_in_body} we may assume that $F/k$ is Galois. 

    We now show that there is a largest integer $a$ such that $\zeta_{p^a} \in F$. Indeed if $\zeta_{p^a} \in F$ then $L := k(\zeta_{p^a}) \subset F$ is a finite extension of $k$ such that $L/k$ is, \minor{29}{by definition of $F$}, unramified at all primes of $k$ above $p$. In particular the ramification at primes above $p$ in $L/\Q$ is bounded from above by $d$ and hence $(p-1)p^{a-1} \leq d$. 
    
    Therefore \Cref{lem:isogeny_from_torsion} applies, and we may write $g$ to be the largest integer for which $E[p^g] \subseteq E(F)$.  Observe that $(p-1)p^{g-1} \leq (p-1)p^{a-1} \leq d$ and hence $g$ is bounded by $\log_p(d)+1$. \minor{30}{Since $g$ is thereby bounded solely in terms of $p$ and $d$, we may assume that $e \geq g$: indeed, if $e \leq g$ then $e \leq \log_p(d) + 1$ is already bounded as required, and there is nothing to prove.}
    
    We now have both a $p^g$ and a $p^e$ torsion point in $E(F)$, whence we have a subgroup of $E(F)_{tors}$ isomorphic to $\Z/p^g\Z \oplus \Z/p^e\Z$ with $e \geq g$, and thus a cyclic $k$-rational $p^{e - g}$-isogeny. Writing $h := e - g$, we have thus constructed a $k$-rational point on $X_0(p^h)$.

    The next step is to bound $h$. From the main result of \cite{abramovich1996linear} we know that the gonality of $X_0(p^h)$ tends to infinity as $h$ does. Let $f$ be an integer such that the gonality of $X_0(p^{f})$ is strictly larger than $2d$. Then by \cite{frey1994curves} there are only finitely many points of degree $d$ on $X_0(p^{f})$. Choosing representative elliptic curves $E_1, \ldots, E_s$ for the non-CM degree $d$ points on $X_0(p^{f})$, we apply Serre's Open Image Theorem to each $E_i$ to show that there is a largest integer $m_i$ for which $E_i$ admits an isogeny of degree $p^{m_i}$. In this way we obtain the bound of $\max(f, m_1, \ldots, m_s)$ for $h$.
    
    Therefore, we may bound $e$ as $g + \max(f, m_1, \ldots, m_s)$. Since $g$ is bounded only in terms of $p$ and $d$, we obtain a bound on $e$ that depends only on $p$ and $d$, finishing the proof.
\end{proof}

\begin{remark}\main{8}
    We are not aware of any counterexamples to \Cref{cor:unramified_boundedness} for CM elliptic curves. Indeed, we think it is possible that the restriction to non-CM elliptic curves in \Cref{cor:unramified_boundedness} is an artefact of the proof (specifically in the use of Serre's Open Image Theorem), rather than a necessity for making the statement true. In particular, Shimura's reciprocity law gives very detailed information on the action of Galois on the $N$-torsion module $E[N]$ of CM elliptic curves, and this could potentially be used to prove the corollary in the CM-case.
\end{remark}

Next we prove part (a) of \Cref{thm:main}.

\begin{proposition}\label{prop:uniform_case_a}
Let $k$ be a number field of degree $d$, and $E/k$ an elliptic curve admitting a $k$-rational $p$-isogeny of signature $\eps$. Suppose that $\trace \eps \nequiv 0 \Mod{6}$. Then for all primes $q$, $p$ divides the nonzero integer $B_{\eps,q}$.
\end{proposition}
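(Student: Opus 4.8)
The plan is to show that $B_{\eps,q}$ is nonzero for every prime $q$ by ruling out the only way it could vanish, namely the existence of a splitting type $(r,e_1,\ldots,e_r,f_1,\ldots,f_r) \in S_d$ and a tuple $(\beta_1,\ldots,\beta_r) \in \prod_i S(q^{f_i},\overline K)$ with
\[ q^{\trace \eps} = \prod_{i=1}^r \beta_i^{12 e_i}. \]
The key observation is an archimedean size constraint: each $\beta_i \in S(q^{f_i},\overline K)$ is either $\pm 1$, $\pm q^{f_i}$, or a Frobenius root over $\F_{q^{f_i}}$, and in all cases $|\beta_i| \in \{1, q^{f_i/2}, q^{f_i}\}$ under every complex embedding (the Weil bound gives $|\beta_i| = q^{f_i/2}$ for genuine Frobenius roots). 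Writing $|\beta_i| = q^{c_i f_i}$ with $c_i \in \{0, \tfrac12, 1\}$, the displayed equality forces $\trace \eps = 12 \sum_i c_i e_i f_i$ and hence $\trace \eps \equiv 12\sum_i c_i e_i f_i$. Since $2c_i \in \{0,1,2\}$ is an integer and $\sum_i e_i f_i = d$, we get $\trace \eps = 6 \sum_i (2c_i) e_i f_i$, an integer multiple of $6$. This contradicts the hypothesis $\trace \eps \nequiv 0 \Mod 6$, so no vanishing tuple exists, and therefore $B_{\eps,q} = B^\ast_{\eps,q} \neq 0$.

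Once nonvanishing is established, the divisibility $p \mid B_{\eps,q}$ follows: \Cref{prop:divisibility_bound} applied with $\alpha = q$ (coprime to $p$ as long as $q \neq p$, and the case $q = p$ is handled trivially since then $p = q$ is already bounded) shows that $p$ divides one of the norms $B_{\eps,q,\beta}$, hence divides their lcm $B^\ast_{\eps,q} = B_{\eps,q}$. One should double-check the edge case where $q$ ramifies or $q = p$, but these are immediate. The additive bound $p \leq (q^{\trace\eps} + q^{12d})^{2^d}$ then follows directly from \Cref{prop:gen_additive_bound}, and taking $q = 2$ gives the bound stated in \Cref{thm:main}(a).

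The main obstacle — really the only subtlety — is making the archimedean argument airtight: one must verify that \emph{every} element of $S(q^{f_i}, \overline K)$, under \emph{every} embedding into $\C$, has absolute value of the form $q^{c_i f_i}$ with $2c_i \in \Z$. For $\pm 1$ and $\pm q^{f_i}$ this is clear; for Frobenius roots $\beta$ of an elliptic curve over $\F_{q^{f_i}}$ one uses that $\beta$ and its complex conjugate multiply to $q^{f_i}$ (the Weil conjectures / Hasse bound) and that $\beta$ is an algebraic integer, so all conjugates also have absolute value $q^{f_i/2}$; the supersingular case where $\beta$ may be $\pm q^{f_i/2}$ real, or a root of $x^2 + q^{f_i}$, is equally covered. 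Hence $|\beta_i|$ is always $q^{f_i}$ raised to a power in $\{0, \tfrac12, 1\}$, and the divisibility-by-$6$ conclusion is forced. With this in hand the proposition is essentially a one-line consequence of comparing archimedean absolute values on both sides of the putative vanishing relation.
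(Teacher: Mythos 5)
Your proof is correct and follows essentially the same route as the paper: the paper also rules out a vanishing norm by taking absolute values in $q^{\trace\eps} = \prod_i \beta_i^{12e_i}$ and observing that $|\beta_i| \in \{1, q^{f_i/2}, q^{f_i}\}$ forces $6 \mid \trace\eps$, with the divisibility $p \mid B_{\eps,q}$ then coming from \Cref{prop:divisibility_bound} exactly as you say. Your write-up is simply a more detailed version of the paper's argument (the Weil-bound justification and the bookkeeping with the $c_i$ are left implicit there).
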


\begin{proof}
    We will show that Step (2) of \Cref{alg:b_ints} always produces a nonzero integer.

    Consider how $q$ splits in $k$: \[ q\O_k =  \prod_{i=1}^r \fq_i^{e_i}, \] with each $\fq_i$ of residue degree $f_i$. If one of the norms in Step (2) of \Cref{alg:b_ints} were zero, then we would have $(\beta_1,\ldots,\beta_r) \in \prod_{i=1}^rS(q^{f_i},\overline{\Q})$ such that
    \[q^{\trace \eps} = \prod_{i=1}^r\beta_i^{12e_i}. \]
    By considering the absolute value of this equation, and observing that the only possible values for $|\beta_i|$ are $1$, $\sqrt{q}^{f_i}$, or $q^{f_i}$, we see that $6$ must divide $\trace \eps$, contradicting our assumption.
\end{proof}

Next we consider part (c), which we shall prove after establishing the following lemma.

\begin{lemma}\label{lem:frob_roots_ss}
Let $q > 3$ be a prime, $a \equiv 6  \Mod{12}$ an integer and $e_1,\ldots,e_r$, $f_1,\ldots,f_r$ be positive integers. Let furthermore $\beta_i \in S(q^{f_i},\overline{\Q})$ be such that \[\prod_{i=1}^r \beta_i^{12e_i} = q^a.\] Then there is an $i$ for which $f_i$ is odd and $\beta_i^2=-q^{f_i}$.
\end{lemma}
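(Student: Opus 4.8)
The plan is to analyze the equation $\prod_{i=1}^r \beta_i^{12e_i} = q^a$ by passing to valuations at the prime $q$ in a suitable extension field, together with a parity/congruence argument modulo $12$ on the exponents. First I would recall the possible shapes of a Frobenius root $\beta_i \in S(q^{f_i},\overline{\Q})$: either $\beta_i = \pm 1$ or $\beta_i = \pm q^{f_i}$ (the trivial members of $S$), or $\beta_i$ is a root of $x^2 - a_i x + q^{f_i}$ for some integer $a_i$ with $|a_i| \leq 2q^{f_i/2}$ coming from an elliptic curve over $\mathbb{F}_{q^{f_i}}$. In all cases $\beta_i \overline{\beta_i} = q^{2c_i}$ for a nonnegative integer, but more importantly the normalized $q$-adic valuation of $\beta_i$ (extending $v_q$ to $\overline{\Q}$ with $v_q(q)=1$) is: $0$ when $|\beta_i|=1$; $f_i$ when $|\beta_i| = q^{f_i}$; and $f_i/2$ exactly in the supersingular case $\beta_i^2 = -q^{f_i}$ when $f_i$ is odd (or more generally whenever $\beta_i$ has a non-integer valuation, which forces the ordinary-vs-supersingular dichotomy). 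Taking $v_q$ of both sides of $\prod \beta_i^{12e_i} = q^a$ gives $\sum_i 12 e_i v_q(\beta_i) = a$.

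Next I would run the congruence argument modulo $12$. Since $a \equiv 6 \Mod{12}$, the sum $\sum_i 12 e_i v_q(\beta_i)$ must be $\equiv 6 \Mod{12}$; in particular it is not an integer multiple of $12$ unless some $v_q(\beta_i)$ is a non-integer. If every $\beta_i$ had $v_q(\beta_i) \in \{0, f_i\} \subseteq \Z$ (the ordinary or degenerate cases), then $\sum_i 12 e_i v_q(\beta_i)$ would be divisible by $12$, contradicting $a \equiv 6 \Mod{12}$. Hence at least one $\beta_i$ has $v_q(\beta_i) \notin \Z$. For such an $i$, the Frobenius root is necessarily supersingular, meaning the characteristic polynomial $x^2 - a_i x + q^{f_i}$ has $q \mid a_i$; combined with the fact that a non-integer valuation can only be a half-integer here, one gets $v_q(\beta_i) = f_i/2$, which forces $f_i$ odd and, chasing the trace condition $q \mid a_i$ together with $|a_i| \le 2 q^{f_i/2}$ and $q > 3$, pins down $a_i = 0$, i.e. $\beta_i^2 = -q^{f_i}$. (The case $f_i$ even with $a_i = 0$ would give $\beta_i^2 = -q^{f_i}$ still, but then $v_q(\beta_i) = f_i/2 \in \Z$, so that subcase does not arise from the non-integrality; one must argue it separately, or observe it cannot contribute to making the valuation sum non-integral and so is irrelevant to the existence claim — we still need the non-integral contributor, which must have $f_i$ odd.)

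The main obstacle I anticipate is the clean justification of the trichotomy of $q$-adic valuations of elements of $S(q^{f},\overline{\Q})$ — in particular ruling out intermediate possibilities and showing that $q \mid a_i$ together with $|a_i| \le 2q^{f_i/2}$ and $q>3$ genuinely forces $a_i = 0$ rather than $a_i = \pm q^{f_i/2}$ type values (here one uses that $q^{f_i/2}$ is not an integer when $f_i$ is odd, so the only multiple of $q$ in the Hasse interval with the right valuation is $0$; the hypothesis $q > 3$ is what excludes small exceptional supersingular traces like $\pm q$, $\pm 2q$ when $f_i = 2$, though those have $f_i$ even anyway). A secondary point to handle carefully is that Frobenius roots can also be real (e.g. $\beta_i = \pm q^{f_i/2}$ when the curve is supersingular with $f_i$ even and $a_i = \pm 2 q^{f_i/2}$, or the pure power cases), so the statement "$\beta_i$ has non-integer valuation" must be the driving dichotomy rather than "$\beta_i$ is non-real". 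Once the valuation trichotomy is established the rest is the short modular arithmetic above, so I would foreground the valuation lemma and treat the congruence step as immediate.
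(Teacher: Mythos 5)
Your proof is correct, but it runs along a different track from the paper's. The paper extracts a twelfth root of the identity $\prod_i\beta_i^{12e_i}=q^{12a'+6}$ to get $\prod_i\beta_i^{e_i}=\zeta_{12}^t q^{a'}\sqrt{-q}$, deduces that $q$ ramifies in the compositum of the $\Q(\beta_i)$ and hence in some $\Q(\beta_i)$, concludes $q\mid a_i$ so that $\beta_i$ is supersingular, and then reads off ``$f_i$ odd and $\beta_i^2=-q^{f_i}$'' from Waterhouse's classification of supersingular Frobenius traces (Case 5.i of Table 4 of \cite{banwait2022derickx}). You instead apply a fixed extension of $v_q$ to the product identity and exploit the non-integrality of $a/12$; this is morally the same mechanism (the paper's ``$q$ ramifies in $\Q(\zeta_{12}^tq^{a'}\sqrt{-q})$'' is exactly the statement that this element has half-integral valuation), but your version avoids both the twelfth-root ambiguity $\zeta_{12}^t$ and, more substantially, the appeal to Waterhouse's table: from $\beta_i\beta_i'=q^{f_i}$ and $\beta_i+\beta_i'=a_i\in\Z$ one sees that either both conjugate valuations are integers or both equal $f_i/2$, so a non-integral valuation forces $f_i$ odd, $v_q(a_i)\geq f_i/2$, hence $q^{(f_i+1)/2}\mid a_i$, and the Hasse bound $|a_i|\leq 2q^{f_i/2}<q^{(f_i+1)/2}$ (valid since $q>4$) gives $a_i=0$. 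That is a genuinely more self-contained route. One imprecision to fix: as literally written, ``$q\mid a_i$ together with $|a_i|\leq 2q^{f_i/2}$ and $q>3$ pins down $a_i=0$'' is false for $f_i\geq 3$ (e.g.\ $a_i=q$, $f_i=3$ satisfies all three); what you actually need, and what your valuation setup delivers, is the stronger divisibility $q^{(f_i+1)/2}\mid a_i$ coming from $v_q(a_i)\geq f_i/2$ --- make that step explicit (or fall back on Waterhouse as the paper does). With that repair the argument is complete.
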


\begin{proof}
Write $a = 12a'+6$; then $(q^{a'}\sqrt{-q})^{12}=q^a=\prod_{i=1}^r \beta_i^{12e_i}$, so in particular $ \zeta_{12}^t q^{a'}\sqrt{-q} = \prod_{i=1}^r \beta_i^{e_i}$ where $\zeta_{12}$ is a primitive 12th root of unity and $0 \leq t < 12$ some integer. Thus we have an equality of number fields \[ \Q(\zeta_{12}^t q^{a'}\sqrt{-q}) = \Q(\prod_{i=1}^r \beta_i^{e_i}).\] However, we also have the containment of number fields \[ \Q(\prod_{i=1}^r \beta_i^{e_i}) \subseteq \Q(\beta_1^{e_1},\cdots,\beta_r^{e_r}) \subseteq \Q(\beta_1,\cdots,\beta_r). \] Since this latter field is just the compositum of all of the $\Q(\beta_i)$, we see that $q$ ramifies in $\Q(\beta_i)$ for some $i$.

\minor{31}{
Being a Frobenius root, $\beta_i$ is a root of $x^2 + Ax + q^{f_i}$. So $q$ ramifying 
in $\Q(\beta_i)$ means that $q \mid A^2 - 4q^{f_i}$, whence $q \mid A$. By the 
contraposition of \cite[Theorem~2.5~(1)]{banwait2022derickx}, the condition $q \mid A$ 
excludes case~(1), so we are in one of the cases (2)--(5) of \cite[Theorem~2.5]{banwait2022derickx} (Waterhouse's theorem). Case~(4) requires 
$q \in \{2,3\}$ and is excluded since $q > 3$. We rule out the remaining cases other 
than (5)(i) by computing $\Q(\beta_i)$ directly:
\begin{itemize}
    \item in case~(2) we have $A = \pm 2q^{f_i/2}$, so $\beta_i = \mp q^{f_i/2} \in \Q$ 
    and $\Q(\beta_i) = \Q$;
    \item in case~(3) we have $A = \pm q^{f_i/2}$ with $f_i$ even, so 
    $A^2 - 4q^{f_i} = -3q^{f_i}$ and $\Q(\beta_i) = \Q(\sqrt{-3})$;
    \item in case~(5)(ii) we have $A = 0$ with $f_i$ even, so $\beta_i^2 = -q^{f_i}$ 
    and $\Q(\beta_i) = \Q(\sqrt{-1})$;
\end{itemize}
Since $q > 3$, in each of these cases $\Q(\beta_i)$ is unramified at $q$, contrary to 
assumption. We must therefore be in case~(5)(i) --- that is, $f_i$ is odd and $A = 0$, 
so that $\beta_i^2 = -q^{f_i}$ --- which corresponds to Case~$5.i$ of Table~4 of 
\cite{banwait2022derickx} and gives the result.
}
\end{proof}

\minor{32,33}{
\begin{proof}[Proof of part (c).]
    For part (i), write $\trace \eps = 12a' + 6$. We take the splitting type 
    $s = (d,1,\ldots,1, 1\ldots,1)$ corresponding to the completely split case. 
    Consider the $\beta$-tuple $(\beta_1, \ldots, \beta_d)$ where we choose $a'$ of 
    the $\beta$s to be $q$, one of them to be $\sqrt{-q}$, and the rest to be $1$ 
    (note that $a' < d$, since the maximum value of the trace is $12d$). For this 
    $\beta$-vector, we have that 
    $$B_{\eps,q,s,\beta} = q^{12a'+6} - \prod_{i=1}^d \beta_i^{12} = q^{12a'+6} - q^{12a'}\sqrt{-q}^{12}=0$$ 
    and hence $B_{\eps,q} = 0$.
    
For part (ii), let $a = \trace \eps$. We apply \Cref{prop:divisibility_bound} with 
    $\alpha = q$ to obtain, for each $i$, the existence of a $\beta_i \in S(q^{f_i}, 
    \overline{\Q})$ and a prime ideal $\fp_i$ of $\Q(\beta_i)$ above $p$ such that
    \begin{equation}\label{eq:lambda_beta}
        \lambda(\Frob_{\fq_i}) \equiv \beta_i \Mod{\fp_i}
    \end{equation}
     (where $\fq_i$ are the distinct prime ideals dividing the ideal $(q)$, and we 
     write $s = (r,e_1,\ldots,e_r,f_1,\ldots,f_r)$ for the corresponding splitting 
     type of $q$ in $k$) and such that, for $\beta = (\beta_1,\ldots,\beta_r)$, the 
     prime $p$ divides $B_{\eps,q,s,\beta}$. If $B_{\eps,q,s,\beta} \neq 0$, then $p$ 
     will divide $B^\ast_{\eps,q,s}$, and hence $B^\ast_{\eps,q}$. 
     
     If $B_{\eps,q,s,\beta} = 0$, then we have \[ q^a = \prod_{i=1}^r \beta_i^{12e_i}; \] 
     by applying \Cref{lem:frob_roots_ss} we have the existence of an $i$ for which 
     $f_i$ is odd and $\beta_i^2 = -q^{f_i}$, which implies $\Q(\beta_i) = 
     \Q(\sqrt{-q})$. Suppose now for a contradiction that $p$ were inert in 
     $\Q(\beta_i) = \Q(\sqrt{-q})$, so that $\fp_i = (p)$ and the residue field 
     $\mathcal{O}_{\Q(\sqrt{-q})}/\fp_i$ is isomorphic to $\F_{p^2}$. Since the isogeny 
     character $\lambda$ takes values in $\F_p^\times$, \Cref{eq:lambda_beta} shows 
     that the reduction of $\beta_i$ modulo $\fp_i$ lies in the prime subfield 
     $\F_p^\times$ of $\F_{p^2}$. Reducing the equation $\beta_i^2 = -q^{f_i}$ modulo 
     $\fp_i$ therefore gives an identity in $\F_p$, exhibiting $-q^{f_i}$ --- and hence 
     $-q$, as $f_i$ is odd --- as a square modulo $p$. This contradicts the assumption 
     that $p$ is inert in $\Q(\sqrt{-q})$. Therefore $p$ splits in $\Q(\sqrt{-q})$.
\end{proof}
}

\section{Type 1 isogenies over quadratic fields}\label{sec:type_1_weeding}

In this section we prove \Cref{thm:type_1_exact_list}. 
\minor{35}{
We execute \Cref{alg:b_ints} which is implemented as \path{uniform_isogeny_primes.py} (see also \Cref{rem:algo_implementation}) and obtain the superset \[ \set{2, 3, 5, 7, 11, 13, 17, 19, 37, 41, 43, 73, 109}; \] this is the set of primes we need to decide upon.
}

\subsection{The primes \texorpdfstring{$p \leq 13$}{p <= 13}} 
For each prime $p$ in the set $\set{2, 3, 5, 7, 11, 13}$ there exists a quadratic field $K$ and an elliptic curve $E$ over $K$ with a $K$-rational torsion point \cite{kamienny1992torsion}; the curves thus admit isogenies whose kernels are generated by points defined over an unramified extension of $K$ (i.e. the trivial extension).

\subsection{The primes \texorpdfstring{$p = 17, 19$ and $37$}{p = 17, 19 and 37}}

These are isogeny primes, as the following result shows.

\begin{proposition}
    Let $\theta_{17} := \sqrt{-648810}$, $\theta_{19} := \sqrt{-544752}$, $\theta_{37} := \sqrt{-23}$   and define the elliptic curves $E_{17}$, $E_{19}$ and $E_{37}$ by:
{\small    
\begin{align*}
         E_{17}:  y^2 &= x^3 + \frac{655305766\theta_{17} - 59206784085} {3805380}x + 
\frac{-14063682165143\theta_{17} - 439793145011475} {308235780}\\
 E_{19}:  y^2 &= x^3 + \frac{-19939113\theta_{19} - 47253873316} {186368}x + 
\frac{-1182780494221\theta_{19} - 7313237225587764} {281788416}\\
E_{37}: y^2 &= x^3 + \frac {4107\theta_{37} + 45177} {2}x + 
1114366\theta_{37} + 2228732.
\end{align*}
}
Then the curves $E_{17}, E_{19}$ and $E_{37}$ admit isogenies of signature $(0, 0)$ whose degrees are $17, 19$ and $37$ respectively.
\end{proposition}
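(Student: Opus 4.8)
The plan is to read this proposition as the verification of an explicit construction: the three curves were produced by a search through the relevant modular-curve data (recorded in \texttt{EpsilonTypes.m}), and the task is to confirm, for each $p \in \set{17,19,37}$, the three assertions implicit in the statement — that $E_p$ really is an elliptic curve over $K_p := \Q(\theta_p)$ with $j(E_p) \in K_p \setminus \Q$ (so that the associated point on $X_0(p)$ is honestly quadratic), that $E_p$ admits a $K_p$-rational cyclic isogeny of degree $p$, and that the signature of a suitable such isogeny equals $(0,0)$.

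First I would clear away the elementary points: compute the discriminant of each displayed Weierstrass equation and check it is a nonzero element of $K_p$, and check that $j(E_p)$, computed from that equation, lies in $K_p$ but not in $\Q$, so that the corresponding $X_0(p)$-point is genuinely of degree $2$. (In fact each $E_p$ is a $\Q$-curve, as its construction from a $\Q$-rational point of $X_0(p)^+$ shows, but we will not need this.) Next, to exhibit the isogeny I would compute the $p$-division polynomial $\psi_p \in K_p[x]$ of $E_p$ and factor it over $K_p$; in each case it has an irreducible factor $h_p$ of degree $(p-1)/2$ whose root set is stable under $G_{K_p}$, hence cutting out a cyclic $G_{K_p}$-stable subgroup $W \subseteq E_p[p]$ of order $p$ and an isogeny $\phi_p : E_p \to E_p/W$ of degree $p$; equivalently one may simply check that $j(E_p/W)$ matches the second $j$-invariant recorded in the transcript accompanying the statement. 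For $p = 37$ this is the isogeny whose signature we want; for $p = 17$ and $p = 19$ the curve $E_p$ is, in that transcript, the \emph{target} of the exhibited isogeny, so the isogeny of interest is the dual $\widehat{\phi}_p$, whose existence is automatic.

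The substantive step is the determination of the signature. Since $p > 13$ and $p$ is unramified in each $K_p$, \Cref{prop:momose_1_non_galois_md} tells us that $\eps$ is the \emph{unique} tuple in $\set{0,4,6,8,12}^2$ with $\lambda^{12}((\alpha)) \equiv \alpha^{\eps} \Mod{\fp_0}$ for all $\alpha \in K_p^\times$ coprime to $p$, where $\lambda$ is the isogeny character of the isogeny under consideration. I would pin $\eps$ down by analysing the reduction of $E_p$ at the prime(s) of $K_p$ above $p$: after replacing the displayed equations (which need not be integral at $p$) by minimal models, show that $E_p$ has good reduction there and that the kernel $W$ reduces to the \'etale part of $E_p[p]$ — concretely, that a generator $P$ of $W$ is defined over an extension of $K_p$ unramified at $p$, which one checks by verifying that the field $K_p(x(P))$ cut out by the factor $h_p$ of $\psi_p$, and then the field obtained by adjoining the matching $y$-coordinate, are unramified at the primes over $p$. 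An unramified-at-$p$ kernel forces every entry of $\eps$ to equal $0$; for $p = 17, 19$ the same computation applied to $\phi_p$ instead returns signature $(12,12)$, and passing to the dual again yields $(0,0)$. As an independent cross-check one can evaluate $\lambda^{12}((q))$ for a few small rational primes $q$ split or inert in $K_p$ — this only requires counting points on the reduction of $E_p$ and reducing a Frobenius root modulo a prime above $p$ — and confirm the value is $\equiv 1 \Mod p$; by the uniqueness just quoted, finitely many such values already force $\eps = (0,0)$.

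I expect the only genuine obstacle to be this last step. The existence of the $p$-isogeny is a routine division-polynomial factorisation and the field-of-definition checks are immediate, but reading off the signature correctly requires some care with minimal models and with the local representation at $p$, in particular distinguishing the unramified (\'etale, entry $0$) case from the multiplicative (entry $12$) case and from the intermediate cases $4,6,8$ — and, of course, one must check that the explicit curves have been transcribed faithfully from the modular-curve computation that produced them.
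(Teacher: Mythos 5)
Your proposal is correct and follows essentially the same route as the paper: the paper's proof is precisely a computational verification (via the Magma script \path{EpsilonTypes.m}) that these explicitly exhibited quadratic points on $X_0(17)$, $X_0(19)$ and $X_0(37)$ carry isogenies of the stated signature, including the same observation that for $p=17,19$ the directly computed isogeny has signature $(12,12)$ and one must pass to the dual, of which $E_{17}$ and $E_{19}$ are the domains. The extra detail you supply on extracting the signature from the ramification of the kernel field at primes above $p$ is consistent with the theory the paper uses elsewhere (e.g.\ in the proof of \Cref{cor:main_in_body}) and is simply an unpacking of what the script computes.
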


\begin{proof}
The modular curves $X_0(17)$, $X_0(19)$ and $X_0(37)$ are (hyper)elliptic curves and hence admit infinitely many quadratic points. The above examples were found by generating quadratic points and computing their signatures. For $X_0(17)$ and $X_0(19)$ we did this by choosing a random $x$ coordinate and solving for $y$ on the elliptic models as found in Magma's Small Modular curve database. For $X_0(37)$ we did this by using the parameterisation of the quadratic points on $X_0(37)$ from \cite[Proposition 5.4]{box2021quadratic}. The Magma function \verb|print_eps_type_info| that we wrote for this can be found in \path{magma_scripts/EpsilonTypes.m} in \cite{isogeny_primes}. Note that for $X_0(17)$ and $X_0(19)$ the Magma code actually returns an isogeny whose signature is $(12, 12)$. However, their dual isogenies have signature $(0, 0)$ and these dual isogenies have $E_{17}$ and $E_{19}$ as their domain. For $X_0(37)$ the isogeny returned by the above code already had signature $(0,0)$, so taking the dual was not necessary.
\end{proof}

\minor{36}\begin{remark}
    The above elliptic curves admit a $p$-isogeny with signature $(0,0)$. In particular, if $\lambda$ is the associated isogeny character, then we know that $\lambda^{12}$ is everywhere unramified. We do not know if there are examples where $\lambda$ is itself everywhere unramified. It would be interesting to know the answer to this more refined question.
\end{remark}

\minor{35}{
\subsection{The prime \texorpdfstring{$p = 41$}{p = 41}} This prime does not arise, as the following result shows.

 \begin{proposition}\label{prop:no-41-signature-0-0}
      There is no elliptic curve over a quadratic field admitting a $41$-isogeny of signature
  $(0,0)$.
  \end{proposition}

  \begin{proof}
  If $P$ is a quadratic point on $X_0(41)$ arising from pulling back a $\Q$-point on $X_0(41)^+$, then \Cref{lem:atkin_lehner_signature} applies to show that the signature is not $(0,0)$. Thus $P$ must be an exceptional quadratic point. By work of Bruin and Najman
  \cite[Table 12]{bruin2015hyperelliptic}, there are, up to Galois conjugacy, precisely two of these, both
  defined over $\mathbb{Q}(i)$; on the model
  \[
  y^2 + (-x^4 - x)y = -x^7 - 2x^6 + 2x^5 + 5x^4 + 2x^3 - 4x^2 - 5x - 2
  \]
  of $X_0(41)$ they are given by
  \[
  P_1 = \left( \tfrac{1}{2}(-i - 1),\ \tfrac{1}{4}(-3i - 4) \right)
  \quad\text{and}\quad
  P_2 = \left( \tfrac{1}{2}(-i - 1),\ \tfrac{1}{4}(i + 1) \right).
  \]
  Using the Magma function \verb|eps_from_isogeny| in \path{magma_scripts/EpsilonTypes.m} of
  \cite{isogeny_primes} (the precise code may be found at the bottom of that file), we computed the
  signatures of the isogenies corresponding to $P_1$ and $P_2$ to be $(0,4)$ and $(12,8)$
  respectively; as a consistency check, these two isogenies are dual to one another, since $P_2 =
  w_{41}(P_1)$. The Galois conjugates $P_1^{\sigma}$ and $P_2^{\sigma}$ then yield the signatures
  $(4,0)$ and $(8,12)$. In particular, no quadratic point of $X_0(41)$ yields an isogeny of
  signature $(0,0)$.
  \end{proof}
}
\subsection{The prime \texorpdfstring{$p = 43$}{p = 43}} This prime arises. In fact we have the following more precise statement.

\begin{proposition}
\begin{enumerate} Let $K$ be the quadratic field $K=\Q(\sqrt{-71})$, let $\sigma$ be the nontrivial Galois automorphism of $K$, and define $$j_{43} := -\frac {49\sqrt{-71} +977} {4}.$$
\item An elliptic curve $E$ over a quadratic field $K'$ admits a $43$-isogeny of signature $(0,0)$ if and only if $K' = K$ and $j(E) = j_{43}$ or $\sigma(j_{43})$.
\item The elliptic curve $E_{43}$ given by $$y^2=x^3-3j_{43}(j_{43}-1728)x+2j_{43}(j_{43}-1728)^2$$ obtains a torsion point of degree $43$ over an extension of $K$ that is unramified at both primes dividing $43 \O_K$.
\item There is no elliptic curve over a quadratic field that obtains a 43-torsion point over an everywhere unramified extension of that field.
\end{enumerate}
\end{proposition}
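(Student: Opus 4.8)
Here is a plan.

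\emph{Parts (1), (2), (3).} The curve $X_0(43)$ has genus $3$, and its Atkin--Lehner quotient $X_0^+(43)$ is an elliptic curve of Mordell--Weil rank $1$; hence $X_0(43)$ has infinitely many quadratic points --- the pullbacks of $X_0^+(43)(\Q)$ --- together with a finite list of sporadic ones, all of which are known (determined as for $X_0(37)$ above, cf.\ \cite{box2021quadratic}). My plan for (1) and (2) is to run through this list and compute the signature of each corresponding isogeny using the routine \verb|print_eps_type_info| already used for $p=17,19,37$: the pullback points satisfy $w_{43}(x)=\sigma(x)$, so by \Cref{lem:atkin_lehner_signature} their signature has the shape $(a,12-a)$ and is never $(0,0)$; the two cusps and the one rational CM point (whose $43$-isogeny, coming from $\Q$, base-changes to signature $(6,6)$) are checked by hand; what survives is a single Galois orbit $\{x,\sigma(x)\}$ of signature $(0,0)$, defined over $K=\Q(\sqrt{-71})$ with $j$-invariants $j_{43}$ and $\sigma(j_{43})$, giving (1) and (2). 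Since $j_{43}\neq 0,1728$, the point $x$ determines the pair $(E,C)$ only up to quadratic twist over $K$, so \emph{every} quadratic twist of $E_{43}$ likewise carries a $K$-rational $43$-isogeny, all of signature $(0,0)$ (the signature depends only on $\mu=\lambda^{12}$, which is twist-invariant). For (3), one exhibits (as for $E_{17},E_{19},E_{37}$) an explicit $K$-rational $43$-isogeny of $E_{43}$ of signature $(0,0)$; since $E_{43}$ has good reduction at both primes of $K$ above $43$, the vanishing of the signature there makes the isogeny character $\lambda$ unramified at those primes, so a generator of its kernel is defined over an extension of $K$ unramified at both primes above $43$.

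\emph{Part (4).} Suppose $E/K'$ is an elliptic curve over a quadratic field which acquires a point $P$ of order $43$ over an everywhere unramified extension $M/K'$, enlarged if necessary so that $M/K'$ is Galois. If $\langle P\rangle$ is not $G_{K'}$-stable, then a conjugate $\tau(P)$ ($\tau\in\Gal(M/K')$) lies in $E(M)$ and generates $E[43]$ with $P$, so $E[43]\subseteq E(M)$ and $\zeta_{43}\in M$ by the Weil pairing; but then the ramification index of $43$ in $M/\Q$ is divisible by $42$ while being at most $[K':\Q]=2$, a contradiction. Hence $\langle P\rangle$ is $K'$-rational, giving a $K'$-rational $43$-isogeny with character $\lambda$ satisfying $\overline{K'}^{\,\ker\lambda}\subseteq M$, so $\lambda$ is everywhere unramified; then $\mu=\lambda^{12}$ is too (it is unramified at $43$, and although $\lambda$ must ramify at any prime of bad additive reduction, $\lambda^{12}$ does not), so $\mu$ factors through $\Cl(K')$, and feeding this into \Cref{prop:momose_1_non_galois_md} --- where the left-hand side is trivial on principal ideals --- forces the signature to be $(0,0)$. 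By (1), $K'=\Q(\sqrt{-71})$, of class number $7$; by (2), after the symmetric replacement $E\mapsto E^\sigma$ if needed, $E$ is a quadratic twist of $E_{43}$, say $\lambda=\lambda_{E_{43}}\cdot\chi$ with $\chi$ quadratic. The crux is to prove $\lambda_{E_{43}}^{12}=1$: this is a character of $\Cl(\Q(\sqrt{-71}))\cong\Z/7\Z$, hence trivial once it kills one generator, so one picks a prime $\fq$ of $\Q(\sqrt{-71})$ whose class generates the class group and at which $E_{43}$ has good reduction, notes $\lambda_{E_{43}}(\Frob_\fq)$ is one of the two roots mod $43$ of $x^2-a_\fq x+\Nm(\fq)$, and checks directly that both of these roots are $12$th roots of unity in $\F_{43}^\times$. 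Granting $\lambda_{E_{43}}^{12}=1$, we get $\lambda^{12}=\lambda_{E_{43}}^{12}\chi^{12}=1$, so $\mathrm{ord}(\lambda)\mid 12$; as $\lambda$ is everywhere unramified, also $\mathrm{ord}(\lambda)\mid\#\Cl(\Q(\sqrt{-71}))=7$, whence $\lambda=1$ and $P\in E(K')$. Then $E/\Q(\sqrt{-71})$ has a rational $43$-torsion point, contradicting the classification of torsion over quadratic fields \cite{kamienny1992torsion}.

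\emph{The main obstacle} is precisely the verification $\lambda_{E_{43}}^{12}=1$ --- equivalently, pinning down the exact field of definition of the $43$-torsion point of $E_{43}$. If $\lambda_{E_{43}}^{12}$ were instead a nontrivial character of $\Cl(\Q(\sqrt{-71}))$ (necessarily of order $7$), a suitable quadratic twist of $E_{43}$ would acquire a $43$-torsion point over the Hilbert class field of $\Q(\sqrt{-71})$ --- an everywhere unramified extension --- and (4) would be false; so this computation is indispensable. (Should it fail, one would instead have to exhibit a bad prime of $E_{43}$ where the isogeny character is ramified of order exceeding $2$, which no quadratic twist can cancel.) The only other non-routine input is the determination of the quadratic points on $X_0(43)$ underlying (1)--(2), which, if not available in the literature in the required form, must be produced by a Mordell--Weil sieve on the pullbacks from the rank-$1$ curve $X_0^+(43)$ together with a search for sporadic points.
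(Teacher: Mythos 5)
Parts (1)--(3) of your plan coincide with the paper's proof: Box's classification of the quadratic points on $X_0(43)$, \Cref{lem:atkin_lehner_signature} to kill the pullbacks from $X_0(43)^+(\Q)$ and the rational CM point, and a direct signature computation on the four sporadic points $P_1,\dots,P_4$ (only $P_3$, over $\Q(\sqrt{-71})$, has signature $(0,0)$). The reduction at the start of your part (4) --- Weil pairing to force $\langle P\rangle$ to be $K'$-rational, hence an everywhere unramified isogeny character of signature $(0,0)$, hence $K'=\Q(\sqrt{-71})$ and $E$ a quadratic twist of $E_{43}$ --- also matches the paper.

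However, the step you yourself flag as the crux of part (4) fails: the verification $\lambda_{E_{43}}^{12}=1$ comes out \emph{false}. The computation (done in the paper) shows that the image of $G_K$ acting on the kernel of the $43$-isogeny of $E_{43}$ has order $42$, i.e.\ $\lambda_{E_{43}}$ surjects onto $\F_{43}^\times$; hence $\lambda_{E_{43}}^{12}$ has order $42/\gcd(42,12)=7$ and is a nontrivial (order-$7$, everywhere unramified) character of $\Cl(\Q(\sqrt{-71}))\cong\Z/7\Z$. Your contingency analysis is also off on both sides: the nontriviality of $\lambda_{E_{43}}^{12}$ does \emph{not} falsify (4), because no quadratic twist can make $\lambda_{E_{43}}$ itself everywhere unramified --- twisting by a quadratic $\chi$ can only alter the $2$-part of the order, so $\mathrm{ord}(\lambda_{E_{43}}\chi)$ is always divisible by the odd part $21$ of $42$, whereas an everywhere unramified character of $K$ has order dividing $h_K=7$; and your proposed fallback (finding a bad prime where $\lambda$ ramifies with order exceeding $2$) does not obviously produce anything, since the surplus order-$3$ part of $\lambda_{E_{43}}$ may well be ramified only at the primes above $43$. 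The correct argument, which is the paper's, is exactly the twist-invariance of the odd part of the order: $21\mid\mathrm{ord}(\lambda)$ for every twist, $21\nmid 7$, contradiction. With that replacement your part (4) closes; the final appeal to the classification of quadratic torsion becomes unnecessary.
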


Before we start the proof we describe the rational and quadratic points on $X_0(43)$. The rational points on $X_0(43)$ have been classified in \cite{mazur1978rational} and the quadratic points in \cite{box2021quadratic}. The curve $X_0(43)$ has 3 rational points; two of these are cusps, and one corresponds to the elliptic curve with $j$-invariant $-88473600$, which has CM with the maximal order in $\Q(\sqrt{-43})$. This latter point is also one of the 4 ramification points of $X_0(43) \to X_0(43)^+$, and is denoted by $P_0$ in  \cite[\S 4.1]{box2021quadratic}. $X_0(43)$ has infinitely many quadratic points. The curve $X_0(43)^+$ is a positive rank elliptic curve and pulling back rational points along $X_0(43) \to X_0(43)^+$ produces infinitely many quadratic points on $X_0(43)$. There are only 4 points that are not pullbacks of rational points on $X_0(43)^+$, their $j$-invariants can be found in \cite[\S 4.1]{box2021quadratic} and these are denoted $P_1, P_2, P_3$ and $P_4$ in Box's table in \emph{loc. cit.}.

\begin{proof}
\begin{enumerate}
    \item The infinitely many quadratic points coming from pulling back $\Q$-rational points along the map $X_0(43) \to X_0(43)^+$ do not have signature $(0,0)$ by \cref{lem:atkin_lehner_signature}. The unique noncuspidal rational point on $X_0(43)(\Q)$ (i.e. $P_0$) is a ramification point so still satisfies $w_{43}(x) = x$, so \cref{lem:atkin_lehner_signature} is applicable for this point as well, and the signature of that point is also not $(0,0)$. So if an elliptic curve has a $43$-isogeny of signature $(0,0)$, it must correspond to one of the 4 points $P_1, P_2, P_3$ or $P_4$ on $X_0(43)$. We computed the signatures of $P_1, P_2, P_3$ and $P_4$ using the function \path{eps_from_isogeny} in \path{magma_scripts/EpsilonTypes.m}. These were $(12,6), (0,6), (0,0)$ and $(12, 12)$  respectively. In particular, $P_3$ is the only one with signature $(0,0)$. Part (1) now follows since $P_3$ is defined over $K$ and has $j$-invariant $j_{43}$.
    \item The displayed elliptic curve has $j$-invariant equal to $j_{43}$; this was then verified in Sage by computing the $43$-isogeny, its kernel polynomial, and verifying that it is unramified at the primes of $K$ above $43$.
    \item We first claim that there is no elliptic curve over a quadratic field with an isogeny of degree $43$ such that the Galois representation on the kernel of this isogeny is everywhere unramified. Indeed since such an isogeny would have signature $(0, 0)$ the quadratic field would have to be $K$ and the elliptic curve would have to have $j$-invariant equal to $j_{43}$. We computed the action of Galois on the kernel of the unique $K$-rational 43-isogeny of $E_{43}$. We found that the size of the image of Galois was of order 42. Since all elliptic curves with $j$-invariant equal to $j_{43}$ are a quadratic twist of $E_{43}$, we know that the size of the image of Galois is always divisible by $21$. However, the class number of $K$ is 7, so it doesn't admit an unramified character whose order is divisible by 21. This finishes the proof of the claim.
    
    Now suppose that there is an elliptic curve over a quadratic field $K$ that obtains a $43$-torsion point $P$ over an everywhere unramified extension $L$, which we can assume to be Galois over $K$. 
    
    We first observe that
    $\langle P \rangle$ cannot be $K$-rational. Indeed, if it were, the action of
    $\Gal(\overline{K}/K)$ on $\langle P \rangle$ would be given by a character
    $\chi$, which is trivial on $\Gal(\overline{K}/L)$ since $P \in E(L)$, and
    hence factors through $\Gal(L/K)$. As $L/K$ is everywhere unramified, so is
    $\chi$, giving $E/K$ a $43$-isogeny whose kernel carries an everywhere
    unramified Galois action, contrary to the claim.
    
    Hence $E[43]$ is generated by both $P$ and $\sigma(P)$ for some $\sigma \in \Gal(L/K)$. As in the proof of \Cref{cor:main_in_body}, the standard application of the Galois equivariance of the Weil pairing shows that $\zeta_{43} \in L$. But since $L/K$ is assumed to be everywhere unramified, it follows that $\zeta_{43} \in K$; thus $\Q(\zeta_{43}) \subseteq K$. This is a clear contradiction since $[\Q(\zeta_{43}) : \Q] = 42$.
\end{enumerate}
\end{proof}

\begin{remark}
The computation in part~(2) is essential. The signature is an invariant of 
$\lambda^{12}$, not of $\lambda$: by \Cref{prop:refined_signature}, signature $(0,0)$ 
gives only $\lambda^{12}_{I_\fp} = 1$ at the primes $\fp$ above $43$, leaving 
$\lambda_{I_\fp} = \theta_{42}^{\,b}$ with $b \in 7\Z/42$ undetermined. Part~(2) \emph{asserts existence} of an unramified extension, namely the fixed field of 
$\ker\lambda$, and so needs the sufficient condition $b = 0$ on $\lambda$ itself --- 
strictly finer than the signature, hence requiring the computation. Part~(3) instead 
\emph{obstructs existence}, for which a necessary condition suffices: that 
$\lambda^{12}$ be everywhere unramified already places it in $\mathrm{Cl}(K)$, whose 
order $7$ contradicts the divisibility by $21$. The obstruction never descends below 
$\lambda^{12}$, where the signature is faithful.
\end{remark}

\subsection{The prime \texorpdfstring{$p = 73$}{p = 73}}

This prime arises, and we moreover have the following more precise statement.

\begin{proposition}\label{prop:prime_73}
Let $E$ be an elliptic curve over a quadratic field $K$ admitting a
$K$-rational $73$-isogeny of signature $(0,0)$. Then $K = \Q(\sqrt{-31})$ and
$j(E)$ equals \[
  j_1 = \frac{\pm 6561\sqrt{-31} - 1809}{4}\] or \[j_2 = \frac{\pm 218623729131479023842537441 \sqrt{-31}
        - 75276530483988147885303471}{18889465931478580854784}.
\]
Moreover, exactly one of $j_1, j_2$ is the $j$-invariant of such a curve.
\end{proposition}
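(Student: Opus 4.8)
The plan is to mirror the argument given above for $p = 43$, with the hyperelliptic quotient $X_0(43)^+$ replaced by the genus-$2$ quotient $X_0(73)^+ = X_0(73)/w_{73}$, and with \Cref{lem:atkin_lehner_signature} again carrying the conceptual weight. First I would record the relevant facts about $X_0(73)$: it has genus $5$, its Atkin--Lehner quotient $X_0(73)^+$ has genus $2$, and $X_0(73)$ is neither hyperelliptic (it is absent from Ogg's list) nor bielliptic (it is absent from Bars's list), so by the theorem of Harris--Silverman it has only finitely many quadratic points. Since $73$ is not one of Mazur's isogeny primes, $X_0(73)(\Q)$ consists of just the two cusps, so every non-cuspidal quadratic point is honestly quadratic. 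I would then invoke an explicit classification of the quadratic points of $X_0(73)$ (see \cite{box2021quadratic}), listing the finitely many Galois orbits together with their fields of definition and $j$-invariants.

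Next, exactly as for $p = 43$, I would separate the non-cuspidal quadratic points $x$ (over a quadratic field with nontrivial automorphism $\sigma$) according to whether $w_{73}(x) = \sigma(x)$. Whenever $w_{73}(x) = \sigma(x)$ --- equivalently, whenever $x$ lies above a $\Q$-rational point of $X_0(73)^+$ --- \Cref{lem:atkin_lehner_signature} forces the signature of the corresponding isogeny into the shape $(a, 12-a)$, which is never $(0,0)$. Unlike the $p = 43$ case there is no stray non-cuspidal rational point to handle (no analogue of $P_0$), and the $w_{73}$-fixed points --- the CM points of discriminant $-292$ --- do not intervene either, since $h(-292) = 4$ and such points are therefore not defined over any quadratic field. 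Hence any $73$-isogeny of signature $(0,0)$ over a quadratic field must correspond to one of the finitely many ``exceptional'' quadratic points of $X_0(73)$, namely those not lying above a $\Q$-rational point of $X_0(73)^+$.

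For each such exceptional point I would compute the signature of its $73$-isogeny with the function \path{eps_from_isogeny} in \path{magma_scripts/EpsilonTypes.m}, after checking that $73$ is unramified in the quadratic field in question (so that the signature is well-defined by \Cref{prop:momose_1_non_galois_md}, which applies as $73 > 13$). Since $w_{73}$ permutes the exceptional points and sends one of signature $(a,b)$ to one of signature $(12-a,12-b)$, scanning all of them for signature exactly $(0,0)$ loses nothing --- in particular dual isogenies are accounted for, just as for $X_0(17)$ and $X_0(19)$ one passes to the dual. The expected outcome is that precisely two Galois orbits $\{x,\sigma(x)\}$ of exceptional points have signature $(0,0)$, both defined over $\Q(\sqrt{-31})$, with the two conjugate pairs of $j$-invariants recorded in the statement. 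Because the classification is complete, this pins down $K = \Q(\sqrt{-31})$ and $j(E)$, and --- these being honest quadratic points of signature $(0,0)$ --- it also shows that both families genuinely occur, which is what puts $73$ into the list of \Cref{thm:type_1_exact_list}.

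The main obstacle is the explicit signature computation of the third step: one must write down Weierstrass models over $\Q(\sqrt{-31})$ of the elliptic curves attached to the exceptional points --- which, as the cumbersome second pair of $j$-invariants indicates, can have substantial height --- then construct the degree-$73$ isogeny together with its kernel polynomial and analyse the ramification of the Galois action on the kernel in order to extract $\eps$ via \Cref{prop:momose_1_non_galois_md}. The other input, the classification of quadratic points on $X_0(73)$ itself, is a nontrivial (quadratic-Chabauty-style) computation which I would take from the literature rather than reprove.
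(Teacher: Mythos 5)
For the statement of the proposition itself your argument coincides with the paper's: Box's classification of the quadratic points on $X_0(73)$ shows that all but the two exceptional points $P_1,P_2$ lie above $\Q$-rational points of $X_0(73)^+$, so \Cref{lem:atkin_lehner_signature} excludes signature $(0,0)$ for all of those, and what remains pins down $K=\Q(\sqrt{-31})$ and the two listed $j$-invariants. The divergence is in your third step. The explicit computation you propose --- writing down models over $\Q(\sqrt{-31})$ for the curves attached to $P_1,P_2$, constructing the degree-$73$ isogeny with its kernel polynomial, and reading off $\eps$ --- is exactly what the authors report they could \emph{not} carry out: the remark following the proof states that the isogeny computation did not terminate after several hours, which is why you correctly identify it as ``the main obstacle.'' The paper circumvents it theoretically: $73$ is inert in $\Q(\sqrt{-31})$ and the relevant curves are semistable at the unique prime above $73$, so \Cref{lem:00_sig} forces the signature of each of $P_1,P_2$ (or of its dual) to be $(0,0)$; an Atkin--Lehner argument (ruling out $w_{73}(P_1)=P_1$, which would give CM, and $w_{73}(P_1)=\sigma(P_1)$, which would make the curve a $\Q$-curve, contrary to Box's table) then shows that $P_1$ and $P_2$ are dual to one another. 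Note also that your expected outcome --- that \emph{both} exceptional orbits have signature $(0,0)$ --- cannot occur: since $P_2$ is (a conjugate of) the dual of $P_1$, exactly one of the two has signature $(0,0)$ and the other $(12,12)$, which is precisely why the proposition lists the two $j$-invariants as undecided alternatives. None of this affects the conclusion of the proposition as stated, but your route to the accompanying existence claim (needed to put $73$ into \Cref{thm:type_1_exact_list}) rests on a computation that is not known to be feasible, whereas the paper's semistability argument is.
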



\begin{proof}
By \cite[Section~4.7]{box2021quadratic}, every quadratic point on $X_0(73)$
except the two points $P_1$ and $P_2$ (in the notation of \emph{loc.\ cit.})
arises from a $\Q$-point on $X_0(73)^+$, so by
\Cref{lem:atkin_lehner_signature} the associated isogeny does not have
signature $(0,0)$. Hence a $K$-rational $73$-isogeny of signature $(0,0)$ can
only correspond to $P_1$ or $P_2$; in particular $K = \Q(\sqrt{-31})$ and
$j(E)$ is one of $j_1, j_2$.

It remains to prove that exactly one of $j_1, j_2$ is realised. Since $73$ is
inert in $K$ and these $j$-invariants are realised by curves that are
semistable at $(73)$, \Cref{lem:00_sig} applies, and for $P_1$ either it or its
dual isogeny has signature $(0,0)$; thus at least one of $j_1, j_2$ is realised.

To see that not both are, note that the dual of the isogeny corresponding to
$P_1$ is given by $w_{73}(P_1)$, which is again a $K$-rational quadratic point
not arising from $X_0(73)^+$, hence lies in $\{P_1, P_2, \sigma(P_1),
\sigma(P_2)\}$, where $\sigma$ generates $\Gal(K/\Q)$. Now $w_{73}(P_1) = P_1$
would give $E$ an endomorphism of degree $73$ and hence CM, and $w_{73}(P_1) =
\sigma(P_1)$ would make $E$ a $\Q$-curve; both are excluded by Box's table.
Therefore $w_{73}(P_1) \in \{P_2, \sigma(P_2)\}$, so the isogenies of $P_1$ and
$P_2$ are dual to one another up to Galois conjugacy. Their isogeny characters
thus multiply to the mod-$73$ cyclotomic character, which is ramified at the
prime above $73$; consequently their signatures are complementary and sum to $(12,12)$, so they cannot both equal $(0,0)$. Hence exactly one of
$j_1, j_2$ is realised.
\end{proof}

\begin{remark}
We were not able to compute the kernel of the 73-isogeny that exists for the two $j$-invariants in \Cref{prop:prime_73}. In principle this is possible via Sage (with the method \path{isogenies_prime_degree}), but this didn't finish after several hours, so we here determined the signature through theoretical means. Since we only have the $j$-invariants and not the actual isogeny, we were not able to determine which of the two $j$-invariants above admit an isogeny of signature $(0,0)$.
\end{remark}

\subsection{The prime \texorpdfstring{$p = 109$}{p = 109}} This prime can be ruled out by recent work of \cite{adzaga2023computing} who determine all quadratic points on $X_0(109)$. By looking at Table 13 of \emph{loc. cit.} we see that all noncuspidal points on this curve are such that the Atkin-Lehner involution $w_{109}$ interchanges the quadratic point on $X_0(109)$ with its Galois conjugate; therefore by \Cref{lem:atkin_lehner_signature} the signature of these isogenies cannot be $(0,0)$.

\section{Results of computation, and how many signatures unconditionally remain}\label{sec:how_many_left}

In this final section we quantify the extent to which \Cref{thm:main} falls short of giving a complete classification of prime degree isogenies of elliptic curves over degree $d$ number fields, and we record the results of running our implementation for small values of $d$.

Recall from \Cref{thm:main} that the only signatures $\eps$ for which the theorem provides no bound on $p$ are those with $\trace\eps\equiv0\Mod6$, other than the two Type $1$ signatures $(0,\ldots,0)$ and $(12,\ldots,12)$, which are dealt with by part (b): for $\trace\eps\equiv6\Mod{12}$ part (c) provides only a divisibility criterion rather than a bound, while for $\trace\eps\equiv0\Mod{12}$ (other than Type $1$) the theorem says nothing. Thus the unconditionally-remaining signatures are exactly those with $\trace\eps\equiv0\Mod6$ (other than Type $1$). The natural question, raised already in the introduction, is how numerous these remaining signatures are among all $5^d$ signatures of degree $d$; in particular, whether the assertion that ``most'' signatures are dealt with is justified, and whether it remains so as $d \to \infty$. We make this precise below. After setting up the relevant group actions (\Cref{def:sig_space,def:gh_action}), we show that the number of signatures one is required to consider is, in the most favourable case, a degree $4$ polynomial in $d$, and that this is best possible (\Cref{prop:sd_count,prop:min_count}); we then show that the proportion of remaining signatures tends to $1/3$ (\Cref{prop:proportion,cor:most}). In particular, for $d \geq 2$, the remaining signatures are always a minority, but never a vanishing one. Under GRH these results can be sharpened: the signatures with $\trace\eps\equiv6\Mod{12}$ then acquire a bound (\Cref{thm:main_with_GRH}), further shrinking the remaining proportion, as we explain in \Cref{subsec:grh_count}.

\subsection{The space of signatures and its symmetries}\label{subsec:sig_space}

\begin{definition}\label{def:sig_space}
Let $k$ be a number field of degree $d$ with Galois closure $K/\Q$, and write $X_k := \Hom(k,K)$ for the set of $d$ embeddings of $k$ into $K$ (this is the index set denoted $\Sigma$ in \cite[Definition 3.3]{banwait2022derickx}). Put $V := \left\{0,4,6,8,12\right\}$. By \cite[Definition 3.3]{banwait2022derickx} a signature is a function $\eps \colon X_k \to V$, $\sigma \mapsto a_\sigma$; we write $\mathcal{F}(X_k, V) = V^{X_k}$ for the set of all $5^d$ signatures. The map
\[ \iota \colon \eps \longmapsto 12 - \eps, \qquad (\iota\eps)(\sigma) := 12 - \eps(\sigma), \]
is an involution of $\mathcal{F}(X_k,V)$, identifying a signature with that of the dual isogeny. We set
\[ \Sigma_k := \mathcal{F}(X_k, V)/\langle \iota \rangle. \]
The only signature fixed by $\iota$ being the constant signature $6\sum_\sigma \sigma$, we have $|\Sigma_k| = \frac{5^d + 1}{2}$.

Finally, for $\eps \in \mathcal{F}(X_k,V)$ we call the tuple $c(\eps) := (n_0, n_1, n_2, n_3, n_4)$, where $n_i := \#\left\{\sigma \in X_k : \eps(\sigma) = v_i\right\}$ for $V = \left\{v_0 < v_1 < \cdots < v_4\right\}$, the {\bf content} of $\eps$. Thus $\sum_i n_i = d$ and $\trace\eps = 4n_1 + 6n_2 + 8n_3 + 12n_4$.
\end{definition}

\begin{definition}\label{def:gh_action}
Let $G := \Gal(K/\Q)$ and let $H := \Aut(k)$ be the group of automorphisms of $k$. The group $G$ acts on $X_k$ on the left by post-composition, $g \cdot \sigma = g \circ \sigma$, and $H$ acts on $X_k$ by pre-composition, $h \cdot \sigma = \sigma \circ h^{-1}$. These two actions commute, and so combine into a single left action of $G \times H$ on $X_k$,
\[ (g,h) \cdot \sigma := g \circ \sigma \circ h^{-1}. \]
Since signatures are functions on $X_k$, this induces a left action of $G \times H$ on $\mathcal{F}(X_k, V)$ by $\big((g,h)\cdot\eps\big)(\sigma) := \eps\big((g,h)^{-1}\cdot\sigma\big)$, which commutes with $\iota$ and hence descends to an action of $G \times H$ on $\Sigma_k$.
\end{definition}

Because the integers $A(\eps,\fq)$, $B(\eps,\fq)$ and $C(\eps,\fq)$ of \cite[Definition 3.8, Corollary 3.10]{banwait2022derickx} are obtained as norms from $K$ to $\Q$, they are invariant under the $G$- and $H$-actions, as well as under $\iota$ (see also \cite[Corollary 4.4]{banwait2022derickx}). Consequently the bound on $p$ furnished by \Cref{thm:main} depends only on the class of $\eps$ in the orbit space $(G\times H)\backslash\Sigma_k$, so that in addressing the uniformity question~\ref{q:uniformity} it suffices to treat one signature in each such orbit. We therefore study the size of $(G\times H)\backslash\Sigma_k$. Note that both factors of $G \times H$ act on $X_k$ by permutations, so the content $c(\eps)$ is a $G\times H$-invariant; the involution $\iota$ reverses content, sending $(n_0,n_1,n_2,n_3,n_4)$ to $(n_4,n_3,n_2,n_1,n_0)$.

\subsection{Two extremal cases}\label{subsec:extremes}

The size of $(G\times H)\backslash\Sigma_k$ depends strongly on the pair $(G,H)$. We treat the two extreme cases: when $k$ is abelian, and when $k$ has the largest possible Galois group.

\begin{proposition}[Abelian case]\label{prop:abelian_count}
If $k/\Q$ is abelian, then $K = k$, the set $X_k$ is identified with $G = H$, the $G$- and $H$-actions on $\Sigma_k$ coincide, and
\[ |(G\times H)\backslash\Sigma_k| \;=\; |\Sigma_k/G| \;\geq\; \frac{5^d + 1}{2d}, \]
with strict inequality for $d > 1$. In particular this quantity grows exponentially in $d$.
\end{proposition}

\begin{proof}
If $k/\Q$ is abelian then it is Galois, so $K = k$ and $X_k = \Hom(k,k) = \Gal(k/\Q) = G$, while $H = \Aut(k) = G$. For $g$ in $G$, viewed as an element of $X_k$, post-composition by $g'$ sends $g \mapsto g'g$, while pre-composition by $h$ sends $g \mapsto g h^{-1}$. Since $G$ is abelian we have $g h^{-1} = h^{-1} g$, so the $H$-action by $h$ coincides with the $G$-action by $h^{-1}$. Hence the $G$- and $H$-orbits on $\Sigma_k$ are identical, and $(G\times H)\backslash\Sigma_k = \Sigma_k/G$. As $|G| = d$, every $G$-orbit has size at most $d$, so the number of orbits is at least $|\Sigma_k|/d = \frac{5^d+1}{2d}$; the inequality is strict for $d > 1$ because, for instance, the constant signatures are fixed by $G$.
\end{proof}

\begin{proposition}[Generic case]\label{prop:sd_count}
Suppose that $G$ acts on $X_k$ as the full symmetric group, that is, $\Gal(K/\Q) \cong S_d$ in its action on the $d$ embeddings. Then two signatures lie in the same $G\times H$-orbit if and only if they have the same content; hence the orbits of $G \times H$ on $\Sigma_k$ are in bijection with content vectors taken up to reversal, and
\[ |(G\times H)\backslash\Sigma_k| \;=\; N(d) \;:=\; \frac12\left(\binom{d+4}{4} + \binom{\lfloor d/2\rfloor + 2}{2}\right) \;=\; \frac{d^4}{48} + O(d^3). \]
\end{proposition}

\begin{proof}
The content is always a $G\times H$-invariant. Conversely, if $G$ acts as $S_d$ on $X_k$ then any two signatures with the same content differ by a permutation of $X_k$, and that permutation is realised by some $g \in G$; the two signatures therefore lie in the same $G$-orbit, a fortiori the same $G \times H$-orbit. Thus on $\mathcal{F}(X_k,V)$ the orbits of $G \times H$ are exactly the fibres of the content map $c$, of which there are $\binom{d+4}{4}$, the number of tuples $(n_0, \ldots, n_4) \in \Z_{\geq 0}^5$ with $\sum_i n_i = d$. Passing to $\Sigma_k$ identifies $\eps$ with $\iota\eps$, hence identifies each content with its reversal; the orbits of $G \times H$ on $\Sigma_k$ are therefore the content vectors up to reversal. By Burnside's lemma their number is $\frac12(T + F)$, where $T = \binom{d+4}{4}$ is the number of content vectors and $F$ is the number fixed by reversal. A content is reversal-invariant precisely when $n_0 = n_4$ and $n_1 = n_3$, that is $2(n_0 + n_1) + n_2 = d$; such tuples are determined by a pair $(n_0, n_1)$ with $n_0 + n_1 \leq \lfloor d/2\rfloor$, of which there are $\binom{\lfloor d/2\rfloor + 2}{2}$. This yields $N(d)$, and the asymptotic follows from $\binom{d+4}{4} = \frac{d^4}{24} + O(d^3)$.
\end{proof}

\subsection{The minimal number of signature classes}\label{subsec:minimal}

The generic case is in fact the most favourable one: it realises the least possible number of signature classes among all fields of degree $d$.

\begin{proposition}\label{prop:min_count}
For every number field $k$ of degree $d$,
\[ |(G\times H)\backslash\Sigma_k| \;\geq\; N(d), \]
with equality whenever $G$ acts on $X_k$ as $S_d$. Since fields of degree $d$ with Galois group $S_d$ exist for every $d$, and are the generic ones, the minimum
\[ \min_{[k:\Q] = d} |(G\times H)\backslash\Sigma_k| = N(d) = \frac{d^4}{48} + O(d^3) \]
is attained, and is a degree $4$ polynomial in $d$ (a quasi-polynomial, owing to the floor in $N(d)$).
\end{proposition}

\begin{proof}
Since the content is a $G\times H$-invariant which is reversed by $\iota$, the assignment $\eps \mapsto \left\{c(\eps), c(\iota\eps)\right\}$ of a content up to reversal is constant on $G\times H$-orbits in $\Sigma_k$. Every content up to reversal is realised by some signature, so this assignment takes exactly $N(d)$ distinct values, by the count in \Cref{prop:sd_count}. Hence $(G\times H)\backslash\Sigma_k$ surjects onto a set of size $N(d)$, giving $|(G\times H)\backslash\Sigma_k| \geq N(d)$. Equality when $G$ acts as $S_d$ is the content of \Cref{prop:sd_count}, and the existence of degree $d$ fields with Galois group $S_d$ is classical.
\end{proof}

\begin{remark}\label{rem:4transitive}
Equality in \Cref{prop:min_count} in fact holds if and only if $G$ acts on $X_k$ as a $4$-transitive permutation group, the binding case being the content $(d-4,1,1,1,1)$; by the classification of finite simple groups the only such actions, apart from $S_d$ and the alternating group $A_d$ ($d \geq 6$), occur for the Mathieu groups. We shall only need the case $G = S_d$.
\end{remark}

\subsection{Most signatures satisfy the trace condition}\label{subsec:proportion}

We now quantify the word ``most''. Write $r_6(d)$ for the number of signatures $\eps \in \mathcal{F}(X_k, V)$ with $\trace\eps \equiv 0 \Mod{6}$; up to the two Type $1$ signatures, these are exactly the signatures left unresolved by \Cref{thm:main}.

Both this count and its analogue under GRH (\Cref{subsec:grh_count}) amount to counting objects whose trace, or some linear form in their content, lies in a prescribed residue class. We isolate the elementary tool for such counts, sometimes called \emph{series multisection}: averaging a root of unity over the residue classes modulo $m$ detects divisibility by $m$.

\begin{lemma}[Series multisection]\label{lem:multisection}
Let $m \geq 1$ be an integer and $\zeta := e^{2\pi i/m}$ a primitive $m$th root of unity.
\begin{enumerate}[label=(\roman*)]
\item For every $n \in \Z$,
\[ \frac{1}{m}\sum_{j=0}^{m-1} \zeta^{jn} = \begin{cases} 1 & \text{if } n \equiv 0 \Mod{m},\\ 0 & \text{otherwise.}\end{cases} \]
\item Consequently, for any finite family of integers $(n_s)_{s \in S}$ and complex weights $(c_s)_{s \in S}$,
\[ \sum_{\substack{s \in S \\ n_s \equiv 0 \,(m)}} c_s \;=\; \frac{1}{m}\sum_{j=0}^{m-1}\,\sum_{s \in S} c_s\,\zeta^{j n_s}. \]
\end{enumerate}
\end{lemma}

\begin{proof}
For (i): if $m \mid n$ then $\zeta^{jn} = 1$ for every $j$ and the sum is $m$; otherwise $\zeta^n \neq 1$ and the geometric sum $\sum_{j=0}^{m-1}(\zeta^n)^j = \big((\zeta^n)^m - 1\big)/(\zeta^n - 1)$ vanishes, since $(\zeta^n)^m = (\zeta^m)^n = 1$. Part (ii) follows by multiplying the indicator in (i) by $c_s$ and summing over $s \in S$, exchanging the order of summation.
\end{proof}

\begin{proposition}\label{prop:proportion}
\begin{enumerate}[label=(\roman*)]
\item Among all $5^d$ signatures,
\[ r_6(d) = \frac{5^d + 2\cdot 2^d}{3}, \]
so that $r_6(d)/5^d \to 1/3$ as $d \to \infty$; moreover $r_6(d)/5^d$ decreases monotonically from $3/5$ to $1/3$.
\item Suppose $G$ acts as $S_d$ on $X_k$. Among the $N(d)$ signature classes of \Cref{prop:sd_count}, the proportion with $\trace \equiv 0 \Mod{6}$ --- equivalently, the proportion of content vectors $(n_0, \ldots, n_4)$ satisfying $2n_1 + n_3 \equiv 0 \Mod{3}$ --- is $\frac13 + O(1/d)$.
\end{enumerate}
\end{proposition}

\begin{proof}
(i) Apply \Cref{lem:multisection}(ii) with $m = 6$, the family $S = \mathcal{F}(X_k, V)$ of all signatures, weights $c_\eps = 1$, and $n_\eps = \trace\eps$; writing $\zeta := e^{2\pi i/6}$,
\[ r_6(d) = \frac16\sum_{j=0}^5 \sum_{\eps} \zeta^{j\trace\eps} = \frac16\sum_{j=0}^5 \Big(\sum_{v \in V} \zeta^{j v}\Big)^{d} = \frac16 \sum_{j=0}^5 H(j)^d, \qquad H(j) = 3 + 2\cos\!\Big(\frac{2\pi j}{3}\Big), \]
the second equality because $\trace\eps = \sum_\sigma \eps(\sigma)$ is a sum over the $d$ embeddings, so the sum over signatures factorises, and the third because $\zeta^{6j} = \zeta^{12j} = 1$ gives $\sum_{v \in V}\zeta^{jv} = 3 + \zeta^{4j} + \zeta^{8j}$ with $\zeta^{4j} + \zeta^{8j} = 2\cos(2\pi j/3)$. The values $H(0),\ldots,H(5)$ are $5,2,2,5,2,2$, whence $r_6(d) = \frac16(2\cdot 5^d + 4\cdot 2^d) = \frac13(5^d + 2\cdot 2^d)$. Dividing by $5^d$ gives $r_6(d)/5^d = \frac13\big(1 + 2(2/5)^d\big)$, a sum of positive terms decreasing in $d$, with the stated limit and the extreme value $3/5$ at $d = 1$.

(ii) The trace of a content vector is $4n_1 + 6n_2 + 8n_3 + 12n_4 \equiv 4n_1 + 2n_3 \Mod{6}$ (since $6n_2$ and $12n_4$ are divisible by $6$), which vanishes modulo $6$ if and only if $2n_1 + n_3 \equiv 0 \Mod{3}$. Let $C_6(d)$ be the number of content vectors satisfying this. By \Cref{lem:multisection}(ii) with $m = 3$ and $\omega := e^{2\pi i/3}$, applied to the integers $2n_1 + n_3$ indexed by the content vectors,
\[ C_6(d) = \frac13\sum_{t=0}^2 A_t(d), \qquad A_t(d) = \!\!\!\sum_{\substack{n_0, \ldots, n_4 \geq 0\\ n_0 + \cdots + n_4 = d}}\!\!\! \omega^{t(2n_1 + n_3)} = [x^d]\, \frac{1}{(1-x)^3}\cdot \frac{1}{1 - \omega^{2t}x}\cdot\frac{1}{1 - \omega^{t}x}, \]
the three undifferentiated factors $(1-x)^{-1}$ accounting for the free variables $n_0, n_2, n_4$, whose coefficients in $2n_1 + n_3$ vanish modulo $3$. The order of the pole of the generating function at $x = 1$ is $3 + [\omega^{2t} = 1] + [\omega^{t} = 1]$, which equals $5$ for $t = 0$ and equals $3$ for $t = 1, 2$. Hence $A_0(d) = \binom{d+4}{4} \sim d^4/24$ dominates, and $C_6(d) = \frac13\binom{d+4}{4}\big(1 + O(1/d)\big)$. Dividing by the total $\binom{d+4}{4}$ of content vectors gives proportion $\frac13 + O(1/d)$; the same holds after passing to content vectors up to reversal, since reversal alters numerator and denominator only by $O(d^2)$ fixed-point terms.
\end{proof}

\begin{corollary}\label{cor:most}
As $d \to \infty$, the proportion of signatures that \Cref{thm:main} leaves unresolved tends to $1/3$; equivalently, \Cref{thm:main} resolves a proportion tending to $2/3$ of all signatures. For $d \geq 2$ the unresolved proportion is at most $11/25$, and it is bounded away from $0$. In particular ``most'' signatures satisfy the trace condition of \Cref{thm:main}, and continue to do so as $d \to \infty$, by an asymptotically determined margin.
\end{corollary}

\begin{proof}
The unresolved signatures form, up to the two Type $1$ signatures, the set counted by $r_6(d)$, of density $1/3$ in $\mathcal{F}(X_k,V)$ by \Cref{prop:proportion}(i); this set is $\iota$-stable, so the same density holds in $\Sigma_k$. The stated bound $11/25$ is the value of $r_6(d)/5^d$ at $d = 2$, the maximum over $d \geq 2$ by the monotonicity in \Cref{prop:proportion}(i).
\end{proof}

\begin{remark}\label{rem:general_k}
For a general pair $(G,H)$ we do not give a closed form for the proportion of remaining classes in $(G\times H)\backslash\Sigma_k$, which depends on the orbit sizes and hence on $(G,H)$. However the remaining signatures form a $G\times H$- and $\iota$-stable subset of $\mathcal{F}(X_k,V)$ of density tending to $1/3$; a Burnside computation recovers the same limiting density after passing to orbits in both extremal cases of \Cref{prop:abelian_count,prop:sd_count} (in the generic case this is \Cref{prop:proportion}(ii)). We therefore expect this proportion to be robust across all $k$.
\end{remark}

\subsection{Computations for small degree}\label{subsec:small_degree}

\Cref{tab:signature_counts} records, for $2 \leq d \leq 10$, the minimal number $N(d)$ of signature classes (computed in the generic case $G = S_d$, where signature classes are content vectors up to reversal), together with the number and proportion of those left unresolved by \Cref{thm:main} (the classes with $\trace\eps\equiv0\Mod6$, other than Type $1$). The proportion visibly approaches the limit $1/3 \approx 0.333$ of \Cref{cor:most}, the convergence being quasi-polynomial here, but geometric for the closed-form proportion $r_6(d)/5^d$ of \Cref{prop:proportion}(i) over all $5^d$ signatures. These counts are produced by the routine \path{count_signatures.py} in our repository. The further reduction available under GRH is tabulated alongside in \Cref{subsec:grh_count}.

\begin{table}[htp]
\begin{center}
\begin{tabular}{|c|c|cc|}
\hline
& & \multicolumn{2}{c|}{$\trace \equiv 0 \ (6)$}\\
$d$ & $N(d)$ & remaining & proportion\\
\hline
$2$ & $9$ & $4$ & $0.444$\\
$3$ & $19$ & $8$ & $0.421$\\
$4$ & $38$ & $16$ & $0.421$\\
$5$ & $66$ & $26$ & $0.394$\\
$6$ & $110$ & $43$ & $0.391$\\
$7$ & $170$ & $64$ & $0.376$\\
$8$ & $255$ & $95$ & $0.373$\\
$9$ & $365$ & $133$ & $0.364$\\
$10$ & $511$ & $185$ & $0.362$\\
\hline
\end{tabular}
\vspace{0.3cm}
\caption{\label{tab:signature_counts}The minimal number $N(d)$ of signature classes over degree $d$ fields (\Cref{prop:min_count}), and the number and proportion of those left unresolved by \Cref{thm:main} (trace $\equiv 0 \Mod 6$), excluding the single Type $1$ class (the signatures $(0,\ldots,0)$ and $(12,\ldots,12)$, which are $\iota$-conjugate and hence one class in $\Sigma_k$). The proportion tends to $1/3$ (\Cref{cor:most}); the smaller count available under GRH is given in \Cref{tab:signature_counts_grh}.}
\end{center}
\end{table}

For $d = 2$ and $3$ we describe the situation in more detail in the following examples.

\begin{example}[$d=2$]
Every quadratic field is Galois with group $C_2 = S_2$, so this is at once the abelian case of \Cref{prop:abelian_count} and the generic case of \Cref{prop:sd_count}; the number of signature classes is $N(2) = 9$. These are the $9$ signatures in the first column of \Cref{tab:isogeny_merel_quadratic}, obtained from the $25$ signatures by the $\eps \sim 12 - \eps$ identification (leaving $13$) and the fact that bounding signature $(a,b)$ also bounds $(b,a)$. \Cref{thm:main} deals with all but $4$ of these (those with trace $\equiv 0 \Mod{6}$ other than Type $1$), in agreement with the entry $N(2) = 9$, remaining $4$, of \Cref{tab:signature_counts}.
\end{example}

\begin{example}[$d=3$]\label{ex:cubic}
Here the question splits into the Galois and non-Galois cubic fields, illustrating the two extremes of \Cref{subsec:extremes}. A non-Galois cubic has $G = S_3$, the generic case, and so by \Cref{prop:sd_count} realises the minimum $N(3) = 19$ of \Cref{prop:min_count}; concretely, these $19$ classes are obtained from the $35 = \binom{3+4}{4}$ content vectors by pairing them up under reversal. A Galois cubic is abelian, with $G = C_3$; the $C_3$-action on $\Sigma_k$ has been implemented in the \path{get_redundant_epsilons} method in \path{sage_code/generic.py}, and yields $23$ equivalence classes, in accordance with the bound $23 > \frac{5^3+1}{2\cdot 3} = 21$ of \Cref{prop:abelian_count}. Every non-Galois class is either equal to a Galois class or the union of two of them, so any set of representatives in the Galois case refines the non-Galois one; the $23$ Galois representatives therefore suffice for both. They are:

    \begin{center}
    \begin{tabular}{|c|c|c|c|c|c|c|c|}
    \hline
    $(0,0,0)$ & $(6,6,6)$ & $(4, 6, 12)$ & $(4, 6, 8)$ & $(0, 0, 6)$ & $(6, 8, 8)$ & $(8, 8, 12)$ & $(0, 12, 6)$\\
    $(6, 6, 4)$ & $(0, 0, 4)$ & $(8, 4, 8)$ & $(0, 6, 4)$ & $(8, 8, 8)$ & $(8, 0, 4)$ & $(4, 0, 12)$ & $(0, 12, 8)$\\
    $(4, 6, 0)$ & $(4, 4, 12)$ & $(12, 0, 0)$ & $(0, 8, 4)$ & $(0, 6, 6)$ & $(8, 0, 6)$ & $(0, 0, 8)$ &\\
    \hline
    \end{tabular}
    \end{center}

    \Cref{thm:main} can bound the isogeny primes for all but nine of the Galois equivalence classes: $(8, 8, 8)$, $(12, 0, 0)$, $(0, 6, 6)$, $(8, 0, 4)$, $(0, 8, 4)$, $(6, 6, 6)$, $(4, 6, 8)$, $(0, 0, 6)$, $(0, 12, 6)$; as non-Galois equivalence classes these collapse to eight, the pair $(8, 0, 4)$, $(0, 8, 4)$ sharing the content $\{0, 4, 8\}$ --- matching the entry $N(3) = 19$, remaining $8$, of \Cref{tab:signature_counts}.  If one could uniformly bound the isogeny primes of these signatures, then one would have established uniform boundedness of isogeny primes for elliptic curves over cubic fields. Such uniform boundedness of isogenies is currently only known to exist for $d = 1$. Running the algorithm for the cases the theorem can deal with we obtain \Cref{tab:isogeny_merel_cubic}; this is analogous to \Cref{tab:isogeny_merel_quadratic} shown in the Introduction, except that here we only show the trace of $\eps$, since the bound from the main theorem only depends on the trace. Obtaining these results for $d = 3$ using auxiliary primes $q < 100$ took just over 3 minutes running on a single core of a 2.6 GHz Intel Core i7 processor.
\end{example}

\begin{table}[htp]
\begin{center}
\begin{tabular}{|c|c|}
\hline
Trace & Isogeny Primes $\geq 13$\\
\hline
\multirow{2}{*}{$0$, $36$} & \multirow{2}{*}{\makecell{13, 17, 19, 23, 29, 31, 37, 41, 43, 53, 61, 67, 73, 79, 97, \\ 109, 137, 163, 193, 229, 241, 409, 457, 463}}\\
 & \\[0.5ex]
$4$, $32$ & $17, 23, 29, 41, 47, 53, 71, 101, 107, 113, 137$\\
$8$, $28$ & $17, 23, 29, 41, 47, 53, 59, 71, 89, 113, 137$\\
$10$, $26$ & $17, 23, 29, 41, 53, 59$\\
$14$, $22$ & $17, 23, 29, 41, 47, 71, 107$\\
$16$, $20$ & $17, 23, 29, 41, 53, 71, 113, 137$\\
$6$, $18$, $30$ & ?\\
$12$ & ?\\
$24$ & ?\\
\hline
\end{tabular}

\vspace{0.3cm}
\caption{\label{tab:isogeny_merel_cubic}Status of strong uniform boundedness of isogeny primes of elliptic curves over cubic fields. Our results depend only on the trace of the signature, the possible values of which are shown in the left column.  The `Isogeny Primes $\geq 13$' column then lists the primes $\geq 13$ that could possibly (but may in fact not) occur as the degree of such an isogeny. The traces $6$, $18$, $30$ (for which \Cref{thm:main}(c) gives only a divisibility criterion) and $12$, $24$ remain to be dealt with unconditionally; under GRH the traces $6$, $18$, $30$ become bounded (see \Cref{subsec:grh_count}).}
\end{center}
\end{table}

\subsection{The cost of the computation}\label{subsec:cost}

\begin{example}[$d=7$]
    Running the implementation of \path{uniform_isogeny_primes.py} for $d=7$ and $q \leq 5$ took around 16 hours running on the same machine as in \Cref{ex:cubic}. The largest prime observed across all values of $\trace \eps$ that \Cref{thm:main} can deal with was $86{,}980{,}477$. We have decided not to show these primes here, as for the $\trace \eps = 0$ case alone we found $2{,}793$ possible isogeny primes! The interested reader is free to clone our repository and run the program with $d = 7$ to see these possible isogeny primes.

    For each auxiliary prime $q$ the algorithm runs through every way $q$ can split in a field of degree $d = 7$, the expensive case being a complete split into seven primes of degree one. For each of those it ranges over all the degree-2 Weil polynomials over $\mathbb{F}_q$, of which there are only about $2\sqrt{q}$ once a trace and its negative are identified — six at $q = 5$. It then forms one matrix for every combination of choices across the seven primes, a sevenfold Cartesian product, so the count is that number raised to the seventh power, here $6^7 \approx 280{,}000$ matrices, each requiring the determinant of a $128 \times 128$ integer matrix with entries dozens of digits long. Adding the next prime, $q = 7$, changes little: the number of Weil polynomials is again six, so the dominant computation is the same size and the overall work grows only modestly, from roughly $0.56$ to $0.69$ million determinants per signature. The picture changes at $q = 11$, the first prime at which the number of Weil polynomials rises — to nine — so that the same sevenfold product jumps to $9^7 \approx 4.8$ million matrices. That single prime then requires more determinant evaluations than the entire $q \le 5$ computation combined, and each is more costly besides, as the integer entries grow with $q$. The cost thus climbs gently through $q = 7$ and then sharply beyond it, which is why the computation becomes intractable once the auxiliary-prime bound is pushed much past $q = 5$.
\end{example}

\appendix

\section{Results conditional on GRH}\label{appendix}

As stated in the Introduction, we can obtain further results if we assume GRH. To make absolutely clear which of our results depend on GRH, all such results have been collected in this Appendix.

\subsection{Strengthening the main theorem}

Here, we provide an extension of part (c) of \Cref{thm:main}.

\begin{theorem}\label{thm:main_with_GRH}
    Let $k$ be a number field of degree $d$, and $E/k$ an elliptic curve admitting a $k$-rational $p$-isogeny of signature $\eps$ for $p$ prime. Write $\trace \eps$ for the sum of the integers in $\eps$. Assume that $\trace \eps \equiv 6 \Mod{12}$. Then, assuming GRH, we have the bound \begin{equation}\label{eq:upperbound}
             p \leq \max\left(\left(10^{9\trace \eps} + 10^{108d}\right)^{2^d}, R_d\right),     \end{equation} where $R_d$ is the largest real root  of the function \[ x - \left( g(x)^{2\trace \eps} + g(x)^{24d}\right)^{2^d} \] and $g(x) = \log(6x) + 9 + \frac{5}{2}(\log\log(6x))^2$.
\end{theorem}

\begin{proof}
    First observe that $\eps$ must contain $6$ to satisfy the condition on the trace, which implies that $p \equiv 3 \Mod{4}$ (see e.g. Table 8 in \cite{banwait2022derickx}, and the discussion after Corollary 3.5 of \emph{loc. cit.} for the definition of sextic signature). Quadratic reciprocity then shows that $p$ splitting in $\Q(\sqrt{-q})$ is equivalent to $q$ being inert in $\Q(\sqrt{-p})$.

    We now seek to find a prime $q$ that splits in $\Q(\sqrt{-p})$ and that is bounded by a slowly-increasing function of $p$. Applying \cite[Theorem 5.1]{bach1996explicit} would --- assuming GRH --- ensure the existence of such a $q$ with $q \leq (4\log p + 10)^2$, but this may have that $q$ is $2$ or $3$, which here we need to exclude.

    We thus instead use \cite[Theorem 1.4]{lamzouri2015conditional} as follows (with the reader being warned that the roles of $p$ and $q$ are swapped between our work and theirs). Let $\chi$ denote the quadratic Dirichlet character associated to the quadratic field $\Q(\sqrt{-p})$, viz. $\legendre{-p}{\cdot}$ (since $p \equiv 3 \Mod{4}$), and recall that $q$ splits in $\Q(\sqrt{-p})$ if and only if $\chi(q) = 1$. We induce $\chi$ to a nonprimitive character $\chi'$ of modulus $6p$ and take the usual convention that $\chi'(n) = 0$ if $\gcd(n,6p) > 1$; we thus have that $\chi'(q) = 1$ if and only if $q$ splits in $\Q(\sqrt{-p})$ \emph{and} $q$ is not $2$ or $3$. Thus, in the context of \cite[Theorem 1.4]{lamzouri2015conditional}, we take $H$ to be the kernel of $\chi'$, which is an index $2$ subgroup of $(\Z/6p\Z)^\times$, and we obtain a prime $q \neq 2,3$ that splits in $\Q(\sqrt{-p})$ such that
    \begin{equation}\label{eq:ineq1}
        q \leq \max\left(10^9, g(p) \right).
    \end{equation}
    For this $q$, we have that $p$ does not split in $\Q(\sqrt{-q})$, so from part (c)(ii) of \Cref{thm:main} we conclude that $p$ divides $B^\ast_{\eps,q}$, and then from \Cref{prop:gen_additive_bound} we obtain 
    \begin{equation}\label{eq:ineq2}
        p \leq (q^{\trace \eps} + q^{12d})^{2^d}.
    \end{equation}     
    The inequalities \ref{eq:ineq1} and \ref{eq:ineq2} contradict each other for large enough $p$, from which we obtain the bound on $p$ as displayed in the statement of the theorem.
\end{proof}

\begin{remark}
Since the upper bound \ref{eq:upperbound} in part (iii) of (c) is obtained by looking at when the inequalities \ref{eq:ineq1} and  \ref{eq:ineq2} contradict each other, it is possible to find which of the two terms in \ref{eq:upperbound} is larger by looking at which of the two terms in \ref{eq:ineq1} is the most restrictive when $q=10^9$ and $p=(q^{\trace \eps} + q^{12d})^{2^d}$. By using the upper bound $(q^{\trace \eps} + q^{12d})^{2^d} \leq (2q^{12d})^{2^d}$ one can show that $10^9$ is the largest term in \ref{eq:ineq1} for $d=1,2,3,4$, implying that $R_d$ is the smallest term in \ref{eq:upperbound} for these values of $d$. By using the lower bound $(q^{\trace \eps} + q^{12d})^{2^d} \geq (q^{12d})^{2^d}$ one can show that $g(p)$ is the largest term in \ref{eq:ineq1} for $d=5$. Since $g(p)$ and $(q^{12d})^{2^d}$ are monotonically increasing in $p$ (respectively $d$), it follows that $g(p)$ remains the largest term for $d > 5$. In particular, $R_d$ is the largest term in \ref{eq:upperbound} for $d \geq 5$.
\end{remark}

\begin{remark}
We comment on the size of the bound \ref{eq:upperbound} and on the role of GRH.

By the previous remark, the operative term is the first for $d \leq 4$ and
$R_d$ for $d \geq 5$; in either case it is doubly-exponentially large in $d$.
For $d \leq 4$ the bound is $(10^{9\trace\eps} + 10^{108d})^{2^d}$, of size
roughly $10^{108 d \cdot 2^d}$; in the case of primary interest --- quadratic
fields, $d = 2$, with the smallest admissible trace $\trace\eps = 6$ --- this
is $(10^{54} + 10^{216})^4 \approx 10^{864}$. For $d \geq 5$, where $R_d$ is
the largest real root of $x - (g(x)^{2\trace\eps} + g(x)^{24d})^{2^d}$ and
$g(x)$ grows like $\log x$, one has $R_d \approx C_d^{C_d}$ with
$C_d = 24 d \cdot 2^d$, again doubly-exponential in $d$. Such bounds are
too large for computation; their value is in establishing effectively that the
set of such $p$ is finite.

The hypothesis of GRH is essential to the method, not merely a sharpening of
constants. It is used exactly once, to invoke
\cite[Theorem 1.4]{lamzouri2015conditional} and thereby produce a prime $q$
with $\chi'(q) = 1$ of size $q \leq \max(10^9, g(p))$, hence essentially of size
$\log p$. It is precisely this smallness that makes \ref{eq:ineq1} and
\ref{eq:ineq2} contradictory for large $p$: substituting such a $q$ into
\ref{eq:ineq2} forces $p \leq g(p)^{O_d(1)}$, which fails once $p$ is large
since $g(p)^{O_d(1)} = o(p)$. Unconditionally, the best known effective bounds
on the least prime with a prescribed splitting behaviour --- whether through
Linnik's theorem or the effective
Chebotarev density theorem of Lagarias--Odlyzko --- give only $q \leq p^{A}$
for an absolute constant $A > 0$, that is, a fixed power of $p$ in place of a
power of $\log p$. Substituting any such bound into \ref{eq:ineq2} gives
$p \leq p^{A\max(\trace\eps, 12d)\,2^d}$, whose exponent is at least $24A$ and
so exceeds $1$; the two inequalities are then mutually consistent and yield no
bound on $p$ at all. GRH is therefore what makes the argument produce a finite
bound in the first place.
\end{remark}

The following is obtained from combining the additive bounds from \Cref{thm:main} and \Cref{thm:main_with_GRH}.

\begin{corollary}
    Let $k$ be a number field of degree $d$, and $E/k$ an elliptic curve admitting a $k$-rational $p$-isogeny of signature $\eps$ for $p$ prime. Write $\trace \eps$ for the sum of the integers in $\eps$. Assume that the trace satisfies one of the following conditions:
    \begin{enumerate}
        \item $\trace \eps \nequiv 0 \Mod{6}$;
        \item $\trace \eps = 0$ or $12d$;
        \item $\trace \eps \equiv 6 \Mod{12}$.
    \end{enumerate}
    Then, assuming GRH, we have 
\[ p \leq \max \left((10^{9\trace \eps} + 10^{108d})^{2^d}, R_d \right),\] where $R_d$ is as in \Cref{thm:main_with_GRH}.
\end{corollary}

\subsection{How many signatures remain under GRH}\label{subsec:grh_count}

In \Cref{sec:how_many_left} we found that, unconditionally, the signatures left unresolved by \Cref{thm:main} are exactly those of trace $\equiv 0 \Mod 6$ (other than Type $1$), a set whose proportion among all $5^d$ signatures tends to $1/3$ (\Cref{cor:most}). The strengthening in \Cref{thm:main_with_GRH} now supplies a bound for the signatures of trace $\equiv 6 \Mod{12}$ as well, so that under GRH the only unresolved signatures are those of trace $\equiv 0 \Mod{12}$ (other than Type $1$). We record here how the counts of \Cref{sec:how_many_left} improve under this hypothesis. The structural results of that section --- the group actions, the count $N(d)$ of signature classes, and its minimality (\Cref{prop:sd_count,prop:min_count}) --- are unconditional and carry over unchanged; only the proportion of \emph{remaining} classes is affected.

\begin{proposition}\label{prop:proportion_grh}
Write $r_{12}(d)$ for the number of signatures $\eps \in \mathcal{F}(X_k, V)$ with $\trace\eps \equiv 0 \Mod{12}$. Then
\[ r_{12}(d) = \frac{5^d + 2\cdot 2^d + 3^d}{6}, \]
so that $r_{12}(d)/5^d \to 1/6$ as $d \to \infty$, decreasing monotonically from $2/5$ (at $d = 1$) to $1/6$. Moreover, if $G$ acts as $S_d$ on $X_k$, then among the $N(d)$ signature classes of \Cref{prop:sd_count} the proportion with $\trace \equiv 0 \Mod{12}$ --- equivalently, the proportion of content vectors $(n_0, \ldots, n_4)$ with $2n_1 + 3n_2 + 4n_3 \equiv 0 \Mod{6}$ --- is $\frac16 + O(1/d)$.
\end{proposition}

\begin{proof}
The argument is that of \Cref{prop:proportion}, with the modulus $12$ in place of $6$. Since every value in $V$ is even, $\trace\eps \equiv 0 \Mod{12}$ if and only if $\tfrac12\trace\eps \equiv 0 \Mod 6$; setting $\delta_\sigma := \eps(\sigma)/2 \in \{0,2,3,4,6\}$ and applying \Cref{lem:multisection}(ii) with $m = 6$ and $\zeta := e^{2\pi i/6}$ gives
\[ r_{12}(d) = \frac16\sum_{j=0}^5 G(j)^d, \qquad G(j) = 2 + 2\cos\!\Big(\frac{2\pi j}{3}\Big) + (-1)^j, \]
since $\sum_{v \in V}\zeta^{jv/2} = 2 + \zeta^{2j} + \zeta^{3j} + \zeta^{4j}$ with $\zeta^{2j} + \zeta^{4j} = 2\cos(2\pi j/3)$ and $\zeta^{3j} = (-1)^j$. The values $G(0), \ldots, G(5)$ are $5, 0, 2, 3, 2, 0$, whence the stated formula and $r_{12}(d)/5^d = \frac16\big(1 + 2(2/5)^d + (3/5)^d\big)$, a sum of positive terms decreasing in $d$. The proportion among content vectors is computed exactly as in \Cref{prop:proportion}(ii): writing $C_{12}(d)$ for the number of content vectors with $2n_1 + 3n_2 + 4n_3 \equiv 0 \Mod 6$, \Cref{lem:multisection}(ii) with $m = 6$ expresses $C_{12}(d)$ as a sum of six coefficient extractions whose dominant term, $t = 0$, is $\binom{d+4}{4} \sim d^4/24$ (a pole of order $5$ at $x = 1$), while every other term has a pole of order at most $4$ and so contributes $O(d^3)$. Hence $C_{12}(d) = \frac16\binom{d+4}{4}\big(1 + O(1/d)\big)$, and dividing by the total $\binom{d+4}{4}$ of content vectors gives proportion $\frac16 + O(1/d)$, unaffected by the passage to content vectors up to reversal.
\end{proof}

\Cref{tab:signature_counts_grh} reproduces the count $N(d)$ of \Cref{tab:signature_counts} together with the number and proportion of classes left unresolved under GRH (trace $\equiv 0 \Mod{12}$) and, for comparison, unconditionally (trace $\equiv 0 \Mod 6$). The GRH column counts a subset of the unconditional one, the difference being exactly the classes with $\trace\eps\equiv6\Mod{12}$ that \Cref{thm:main_with_GRH} now bounds.

\begin{table}[htp]
\begin{center}
\begin{tabular}{|c|c|cc|cc|}
\hline
& & \multicolumn{2}{c|}{$\trace \equiv 0 \ (12)$, i.e.\ GRH} & \multicolumn{2}{c|}{$\trace \equiv 0 \ (6)$, i.e.\ unconditional}\\
$d$ & $N(d)$ & remaining & proportion & remaining & proportion\\
\hline
$2$ & $9$ & $3$ & $0.333$ & $4$ & $0.444$\\
$3$ & $19$ & $4$ & $0.211$ & $8$ & $0.421$\\
$4$ & $38$ & $11$ & $0.289$ & $16$ & $0.421$\\
$5$ & $66$ & $14$ & $0.212$ & $26$ & $0.394$\\
$6$ & $110$ & $28$ & $0.255$ & $43$ & $0.391$\\
$7$ & $170$ & $35$ & $0.206$ & $64$ & $0.376$\\
$8$ & $255$ & $59$ & $0.231$ & $95$ & $0.373$\\
$9$ & $365$ & $73$ & $0.200$ & $133$ & $0.364$\\
$10$ & $511$ & $111$ & $0.217$ & $185$ & $0.362$\\
\hline
\end{tabular}
\vspace{0.3cm}
\caption{\label{tab:signature_counts_grh}The minimal number $N(d)$ of signature classes over degree $d$ fields (\Cref{prop:min_count}), and the number and proportion of those left unresolved by \Cref{thm:main} as strengthened by \Cref{thm:main_with_GRH}, under GRH (trace $\equiv 0 \Mod{12}$) and unconditionally (trace $\equiv 0 \Mod 6$), excluding the single Type $1$ class. The GRH column counts a subset of the unconditional column of \Cref{tab:signature_counts}, the difference being the classes with $\trace\eps\equiv6\Mod{12}$; the two proportions tend to $1/6$ and $1/3$ respectively (\Cref{cor:most,cor:most_grh}).}
\end{center}
\end{table}

The improvement is already visible in small degree. For $d = 2$ the four unresolved classes of \Cref{sec:how_many_left} drop to three under GRH, the signature $(0,6)$ now being bounded, in agreement with the entry $N(2) = 9$, remaining $3$, of \Cref{tab:signature_counts_grh}. For $d = 3$ the nine unresolved Galois classes of \Cref{ex:cubic} drop to five --- the four classes $(6,6,6)$, $(4,6,8)$, $(0,0,6)$, $(0,12,6)$ of trace $\equiv 6 \Mod{12}$ now being bounded --- representing four non-Galois classes, in agreement with $N(3) = 19$, remaining $4$. Correspondingly, in \Cref{tab:isogeny_merel_cubic} the traces $6$, $18$, $30$ left open there acquire under GRH the bound $p \leq 1.01 \times 10^{2592}$ furnished by \Cref{thm:main_with_GRH}.

The following is the GRH counterpart of \Cref{cor:most}, and collects the effect of the hypothesis on the counting.

\begin{corollary}\label{cor:most_grh}
Assume GRH. As $d \to \infty$, the proportion of signatures left unresolved by \Cref{thm:main}, as strengthened by \Cref{thm:main_with_GRH}, tends to $1/6$; equivalently, a proportion tending to $5/6$ of all signatures is resolved. For every $d$ this unresolved proportion is at most $2/5$, and it is bounded away from $0$. Thus GRH raises the resolved proportion of \Cref{cor:most} from $2/3$ to $5/6$, the gain being precisely the signatures of trace $\equiv 6 \Mod{12}$.
\end{corollary}

\begin{proof}
Under GRH the unresolved signatures are, up to the two Type $1$ signatures, those counted by $r_{12}(d)$, of density $1/6$ in $\mathcal{F}(X_k, V)$ by \Cref{prop:proportion_grh}; this set is $\iota$-stable, so the same density holds in $\Sigma_k$. The bound $2/5$ is the value of $r_{12}(d)/5^d$ at $d = 1$, the maximum by the monotonicity in \Cref{prop:proportion_grh}. The final assertion combines this with \Cref{cor:most}, the difference $r_6(d) - r_{12}(d) = \tfrac16(5^d + 2\cdot 2^d - 3^d)$ counting exactly the signatures with $\trace\eps\equiv6\Mod{12}$.
\end{proof}

\subsection{Uniform Momose Type 2}

In this section we treat the special case of \emph{Momose Type 2 isogenies}; that is, whose signature is $(6,\cdots,6)$, and furthermore whose $p$-isogeny character $\lambda$ satisfies $\lambda^{12} = \chi_p^6$. Note that this is not just a special case of part (c) of \Cref{thm:main}, since here we have the equality $\lambda^{12} = \chi_p^6$, and not just equality up to an everywhere unramified character. Consequently we will find a much sharper bound on the primes for which such isogenies can arise.

It was to deal with these possible Momose Type 2 isogenies that Momose required GRH, and different, more analytic, techniques are required to resolve it. Moreover, dealing with these isogenies is the most computationally expensive step of the software package \emph{Isogeny Primes} from the previous work of the authors, since it requires checking a certain condition on Legendre symbols for all primes up to quite large bounds (e.g. for $k = \Q(\zeta_7)^+$ this bound is $5.27 \times 10^{13}$).

The result we explain here provides, conditional upon GRH, an explicit uniform bound $C_d$ for Momose Type 2 isogeny primes in the case that $d$ is odd. The integer $C_d$ is computed algorithmically for each odd $d$ (see \Cref{alg:momose_Type_2}). We first assemble the two ingredients --- a uniform version of Momose's ``Condition C'', and an effective Chebotarev bound --- and only then state and prove the theorem, since both refer to the algorithm that computes $C_d$.

Recall from \cite[Proposition 6.1]{banwait2022derickx} that we have the following necessary condition that Momose Type 2 isogenies must satisfy (going back to what Momose called ``Condition C'' in his paper \cite{momose1995isogenies}).

\begin{proposition}[Condition CC]\label{cond:CC}
Let $k$ be a number field, and $E/k$ an elliptic curve admitting a $k$-rational $p$-isogeny of Momose Type 2. Let $q$ be a rational prime, and $\fq$ a prime ideal of $k$ dividing $q$ of residue degree $f$ satisfying the following conditions:
\begin{enumerate}
    \item $q^f < p/4$;
    \item $f$ is odd;
    \item $q^{2f} + q^f + 1 \not\equiv 0 \Mod{p}$.
\end{enumerate}
Then $q$ does not split in $\Q(\sqrt{-p})$.
\end{proposition}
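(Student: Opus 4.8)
The plan is to pin down the residue $\beta := \bar\lambda(\frob_\fq) \in \F_p^\times$ of the isogeny character $\lambda$ at a Frobenius above $\fq$, using the defining relation $\lambda^{12} = \chi_p^6$, and then to confront the resulting algebraic constraint on $\beta$ with the Weil bounds (or the Tate parametrisation) for the reduction of $E$ at $\fq$. First some harmless reductions: $q \neq p$ because $q^f < p/4 < p$; the signature $(6,\dots,6)$ gives $p \equiv 3 \pmod 4$ (Table 8 of \cite{banwait2022derickx}), so $p \geq 11$ and $\legendre{-1}{p} = -1$; and since $\lambda^{12} = \chi_p^6$ is unramified at $\fq$ (as $q \neq p$), after a finite totally ramified base change of $k_\fq$ — which leaves $N\fq = q^f$ and the Frobenius untouched — we may assume $\lambda$ is unramified at $\fq$. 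Evaluating $\lambda^{12} = \chi_p^6$ at $\frob_\fq$ then gives $\beta^{12} \equiv q^{6f}$, i.e.
\[ \beta^2 \;\equiv\; \zeta\, q^f \pmod p \qquad \text{for some sixth root of unity } \zeta \in \F_p^\times . \]
From here I would split on whether $E$ has potentially good or potentially multiplicative reduction at $\fq$, these being exhaustive.

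\emph{Potentially good reduction.} Over a further totally ramified extension where $E$ has good reduction, $\beta$ is a root mod $p$ of the Frobenius polynomial $x^2 - ax + q^f$ with $a \in \Z$, $|a| \leq 2q^{f/2}$; its other root is $q^f/\beta \equiv \zeta^{-1}\beta$, so $a \equiv \beta(1 + \zeta^{-1})$ and hence $a^2 \equiv q^f(2 + \zeta + \zeta^{-1}) \pmod p$. As $\zeta$ runs over $\mu_6(\F_p)$ the factor $2 + \zeta + \zeta^{-1}$ takes the values $4$ ($\zeta=1$), $1$ ($\zeta$ a primitive cube root), $3$ ($\zeta$ a primitive sixth root), $0$ ($\zeta = -1$). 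In the first three cases condition (1) and the Weil bound squeeze the integer $a^2 - c q^f$ (with $c \in \{4,1,3\}$) into $(-p,p)$, so it vanishes; since $q$ is prime and $f$ is odd this is impossible for $c \in \{4,1\}$, and for $c = 3$ it forces $q = 3$ — but $c = 3$ also forces $\zeta$ to be a primitive sixth root of unity in $\F_p$, so $3 \mid p - 1$, whence $\legendre{-p}{3} = \legendre{2}{3} = -1$ and $3$ does not split in $\Q(\sqrt{-p})$. If $\zeta = -1$ then $a^2 \equiv 0 \pmod p$ with $|a| < \sqrt p$, so $a = 0$ and $\beta^2 \equiv -q^f$; thus $-q^f$, hence $-q$ (as $f$ is odd), is a square mod $p$, which with $p \equiv 3 \pmod 4$ and quadratic reciprocity is exactly the statement that $q$ does not split in $\Q(\sqrt{-p})$.

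\emph{Potentially multiplicative reduction.} Condition (1) gives $p \nmid q^f - 1$, so over $k_\fq$ the Tate sequence $0 \to \mu_p \to E[p] \to \Z/p \to 0$ splits whenever its étale line is Galois stable; hence the Galois-stable lines of $E[p]$ yield $\beta \in \{\pm q^f, \pm 1\} \pmod p$ (the sign being the unramified quadratic twist in the non-split case), so $\beta^2 \in \{q^{2f}, 1\}$. Comparison with $\beta^2 \equiv \zeta q^f$ forces $q^f$ to be a sixth root of unity mod $p$, i.e. $p \mid q^{6f} - 1 = (q^f-1)(q^f+1)(q^{2f}+q^f+1)(q^{2f}-q^f+1)$. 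Condition (1) kills the first two factors and condition (3) the third, so $p \mid q^{2f} - q^f + 1$; as $p \geq 11 > 3$ this means $q^f$ has order exactly $6$ in $\F_p^\times$. Since $f$ is odd, $\gcd(\mathrm{ord}_p(q), f) = \mathrm{ord}_p(q)/6$ is odd, so the $2$-adic valuation of $\mathrm{ord}_p(q)$ equals $1$; but $p \equiv 3 \pmod 4$ gives $v_2(p-1) = 1$, so $\mathrm{ord}_p(q) \nmid (p-1)/2$ and $q$ is a non-residue mod $p$. By quadratic reciprocity (and its supplement if $q = 2$) this again says $q$ does not split in $\Q(\sqrt{-p})$.

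This exhausts the cases, so the proposition follows. I expect the real work to be the bookkeeping in the two degenerate branches — the potentially multiplicative case and the ``$q = 3$'' sub-case of potentially good reduction — since it is there that conditions (1), (2), (3), the parity of $f$, and the congruence $p \equiv 3 \pmod 4$ are each genuinely used, and any slip would leave a gap; by contrast the reciprocity dictionary ``$-q$ (resp.\ $q$) is a (non-)square mod $p$'' $\Leftrightarrow$ ``$q$ (does not) split(s) in $\Q(\sqrt{-p})$'', including the prime $2$, should be written out carefully but is routine.
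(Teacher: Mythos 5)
The paper does not actually prove this proposition: it is recalled verbatim from \cite[Proposition 6.1]{banwait2022derickx}, going back to Momose's ``Condition C'' in \cite{momose1995isogenies}, so there is no in-paper proof to compare against. Your argument is essentially the standard one from those sources — evaluate $\lambda^{12}=\chi_p^6$ at $\Frob_\fq$ to get $\beta^2=\zeta q^f$ with $\zeta^6=1$, split on potentially good versus potentially multiplicative reduction (this is exactly the content of the set $S(q,L)$ and \Cref{prop:divisibility_bound}, which you could invoke directly instead of re-deriving via base change), use the Weil bound together with $q^f<p/4$ to force $a^2\in\{0,q^f,3q^f,4q^f\}$ on the nose, and convert the resulting quadratic-residue statements into non-splitting of $q$ in $\Q(\sqrt{-p})$ via $p\equiv 3\Mod{4}$ and reciprocity. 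I checked the two delicate branches (the $a^2=3q^f$, $q=3$ sub-case, where the primitive sixth root of unity forces $p\equiv 1\Mod 3$ and hence $3$ inert; and the multiplicative branch, where conditions (1)--(3) isolate $p\mid q^{2f}-q^f+1$ and the parity of $f$ plus $v_2(p-1)=1$ make $q$ a non-residue) and found no gap.
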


We remark that this condition can be algorithmically implemented to check if a given prime $p$ is a possible Momose Type~$2$ prime for \emph{any} number field of a given degree $d$ as follows. This may be considered a ``uniform Condition CC'' result.

\begin{algorithm}\label{alg:satisfy_condCC_unif}
    Given the following inputs:
\begin{itemize}
    \item an integer $d \geq 1$;
    \item a rational prime $p$,
\end{itemize}
return True or False as follows.
\begin{enumerate}
    \item Compute the set $F$ of odd positive integers $f \leq d$.
    \item For primes $q \leq 1 + (p/4)^{1/\max(F)}$:
        \begin{enumerate}
            \item if $\legendre{q}{p} = 1$, and $p$ does not divide $q^{2f} + q^f + 1$ for all $f \in F$, then return False and terminate the algorithm.
        \end{enumerate}
    \item If not terminated by now, return True.
\end{enumerate}
\end{algorithm}

Two points in \Cref{alg:satisfy_condCC_unif} merit explanation. First, the range of $q$. The algorithm must refute \Cref{cond:CC} \emph{uniformly}, i.e. for \emph{every} degree $d$ field at once. For an arbitrary such field $k$ and rational prime $q$, the only structure we can rely on is that, $d$ being odd, some prime $\fq \mid q$ has odd residue degree $f \leq d$ (writing $q O_k = \prod_i \fq_i^{e_i}$ we have $\sum_i e_i f_i = d$ odd, so some $f_i$ is odd, and $f_i \leq d$); crucially, we cannot predict which odd $f \leq d$ occurs. A single $q$ therefore refutes \Cref{cond:CC} for all fields simultaneously only if conditions (1) and (3) hold for \emph{every} odd $f \leq d$ --- hence the quantifier ``for all $f \in F$''. Since $q^f$ is increasing in $f$, condition (1) holds for all $f \in F$ exactly when it holds for $f = \max(F)$, i.e. when $q^{\max(F)} < p/4$, which is the stated range. Taking instead all primes $q < p/4$ and testing only those $f$ with $q^f < p/4$ would \emph{not} be sound: in a field where $q$ is inert one has $f = d$ and $q^d \geq p/4$, so condition (1) fails for the only prime above $q$ and such a $q$ cannot exclude that field. Such inert fields genuinely occur (one is constructed by \path{condition_CC_field} in \path{sage_code/type_two_primes.py}), so the tighter range is necessary.

Second, the Legendre symbol. Momose Type 2 primes satisfy $p \equiv 3 \Mod{4}$ \cite[Theorem A]{momose1995isogenies}, so $\Q(\sqrt{-p})$ has discriminant $-p$ and the splitting of $q$ is governed by a single symbol. For an odd prime $q \neq p$, $q$ splits in $\Q(\sqrt{-p})$ if and only if $-p$ is a square modulo $q$, i.e. $\legendre{-p}{q} = 1$. Now
\[ \legendre{-p}{q} = \legendre{-1}{q}\legendre{p}{q} = (-1)^{\frac{q-1}{2}}\legendre{p}{q}, \]
while quadratic reciprocity, together with $p \equiv 3 \Mod{4}$ (so that $\tfrac{p-1}{2}$ is odd), gives $\legendre{p}{q}\legendre{q}{p} = (-1)^{\frac{p-1}{2}\frac{q-1}{2}} = (-1)^{\frac{q-1}{2}}$, hence $\legendre{p}{q} = (-1)^{\frac{q-1}{2}}\legendre{q}{p}$. Substituting, the two sign factors cancel and $\legendre{-p}{q} = \legendre{q}{p}$. Thus $q$ splits in $\Q(\sqrt{-p})$ if and only if $\legendre{q}{p} = 1$, which is exactly the test in Step (2a). The same equivalence holds for $q = 2$: since $-p$ is odd, $2$ splits in $\Q(\sqrt{-p})$ precisely when $-p \equiv 1 \Mod{8}$, i.e. $p \equiv 7 \Mod{8}$, which (given $p \equiv 3 \Mod{4}$) is exactly when $\legendre{2}{p} = 1$.

This algorithm is implemented as \path{satisfies_condition_CC_uniform} in \path{sage_code/type_two_primes.py}, and is used to prove \Cref{thm:momose_type_2_unif}.

We now package the computation of the bound as the following algorithm, whose output $C_d$ is the integer appearing in \Cref{thm:momose_type_2_unif}; its correctness is established in the proof of that theorem below.

\begin{algorithm}\label{alg:momose_Type_2}
    Given an odd integer $d \geq 1$, compute a (conditional on GRH) bound $C_d$ on the Momose Type 2 isogeny primes over degree $d$ number fields as follows.
\begin{enumerate}
    \item Compute the following quantities.
    \begin{itemize}
        \item $S_d$, the largest real root of the function \[ x - \left( (4\log x + 10)^{4d} + (4\log x + 10)^{2d} + 1\right). \]
        \item $F$, the set of odd positive integers $f \leq d$.
        \item $V_d := (4\log S_d + 10)^2$.
        \item $T_d$, the set of prime divisors of the integer \[ \prod_{q \leq V_d} \prod_{f \in F} (q^{2f} + q^f + 1). \]
        \item $P$, the largest $p \in T_d$ for which \Cref{alg:satisfy_condCC_unif} with inputs $d$ and $p$ returns True.
    \end{itemize}
    \item Return $\max\left(P, 4(4\log S_d + 10)^{2d}\right)$.
\end{enumerate}
\end{algorithm}

The set $F$ and the range of $q$ in the product defining $T_d$ are dictated by the same uniformity considerations as in \Cref{alg:satisfy_condCC_unif}: we take \emph{all} odd $f \leq d$ because the residue degree of the relevant prime $\fq$ is not known in advance, and we range over primes $q \leq V_d$ because, by \Cref{eq:bach_sorenson_bound}, $V_d$ bounds the least prime splitting in $\Q(\sqrt{-p})$ once the preliminary bound $p < S_d$ is in force (see the proof of \Cref{thm:momose_type_2_unif} below). Thus $T_d$ is precisely the set of candidate primes \emph{not} already excluded by the analytic argument, each of which is then tested individually with \Cref{alg:satisfy_condCC_unif}.

\begin{theorem}\label{thm:momose_type_2_unif}
Let $d$ be a positive odd integer, and let $p$ be the prime degree of a Momose Type 2 isogeny of an elliptic curve over a degree $d$ number field. Then, assuming GRH, we have $p \leq C_d$, where $C_d$ is the integer output by \Cref{alg:momose_Type_2}.
\end{theorem}

\begin{table}[htp]
\begin{center}
\begin{tabular}{|c|c|}
\hline
$d$ & $C_d$\\
\hline
$3$ & $253{,}507$\\
$5$ & $2.91 \times 10^{22}$\\
$7$ & $3.83 \times 10^{33}$\\
$9$ & $2.09 \times 10^{45}$\\
$11$ & $3.26 \times 10^{57}$\\
\hline
\end{tabular}
\vspace{0.3cm}
\label{tab:c_d}
\caption{The uniform bound $C_d$ on Momose Type $2$ isogeny primes for odd integers $d \leq 11$. Only the value for $d = 3$ has been optimised; see \Cref{rem:pari_bound} for more details.}
\end{center}
\end{table}

To illustrate the size of $C_d$, we compute in \Cref{tab:c_d} the value of $C_d$ for the first five odd integers $d$ (excluding $d = 1$).

We emphasise that this only treats the subset of isogenies of signature type $2$ whose isogeny character is in fact of Momose Type 2. Uniformly dealing with the set which in our previous work was denoted $\TypeTwoNotMomoseBound$ remains to be done. Aside from the computational application of the above result, we hope that the ideas in the proof may stimulate other researchers to consider how a similar result may be obtained for even degree number fields.

\begin{proof}[Proof of \Cref{thm:momose_type_2_unif}]
We show that the integer $C_d$ returned by \Cref{alg:momose_Type_2} bounds the Momose Type 2 primes. Let $k$ be an arbitrary number field of odd degree $d$. The idea of the proof is that, for $p$ sufficiently large, we can always find a prime $q \in \Z$ and a prime ideal $\fq$ in $O_k$ lying over it that does \emph{not} satisfy the necessary implication expressed in \Cref{cond:CC}. Since the degree of $k$ is odd we know that over every prime $q \in \Z$ there exists a prime $\fq$ of odd residue degree $f \leq d$ lying over it. In particular it suffices to find, for all sufficiently large $p$ (depending on $d$), a prime $q$ such that:

\begin{enumerate}
    \item $q^{2f} + q^f + 1 \not\equiv 0 \Mod{p}$ for all odd $f \leq d$;
    \item $q^f < p/4$ for all odd $f \leq d$;
    \item $q$ splits in $\Q(\sqrt{-p})$.
\end{enumerate}

Observe that the first 2 of these conditions are satisfied when $q^{2d} + q^{d} + 1 < p$. To force the final condition, we use Bach and Sorenson's Effective Chebotarev Density Theorem \cite[Theorem 5.1]{bach1996explicit}. This theorem guarantees the existence of a prime $q$ which splits in $\Q(\sqrt{-p})$ such that
\begin{equation}\label{eq:bach_sorenson_bound}
    q \leq (4\log p + 10)^2.
\end{equation}
Thus, if $p$ satisfies
\[ (4\log p + 10)^{4d} + (4\log p + 10)^{2d} + 1 < p,\]
then we are done. This inequality yields impossibility for Type 2 primes $p$ that are larger than the largest real root $S_d$ of the function \[ x - \left( (4\log x + 10)^{4d} + (4\log x + 10)^{2d} + 1\right). \]

While this gives a bound, and we may conclude the proof here, we may improve this bound as follows. Since now we know that $p < S_d$, we again apply Bach and Sorenson's result to obtain the existence of a prime $q$ which splits in $\Q(\sqrt{-p})$ such that $q < (4\log S_d + 10)^2 = V_d$. If we knew that the conditions in (1) were satisfied for this range of $p$ and $q$, then we would have that all conditions (1)-(3) would be satisfied provided that $q^d < p/4$, which would then yield impossibility for $p > 4(4\log S_d + 10)^{2d}$.

We thus consider the case where $p$ divides $q^{2f} + q^f + 1$ for some odd $f \leq d$ and prime $q < V_d$. Write $T_d$ for the set of such $p$s (and note that this set can be readily computed with a computer algebra system, as in \Cref{alg:momose_Type_2}). For each $p \in T_d$ we can check if there is \emph{any} degree $d$ field satisfying \Cref{cond:CC} using \Cref{alg:satisfy_condCC_unif}. Writing $P$ for the largest such $p \in T_d$ for which \Cref{alg:satisfy_condCC_unif} returns True, we conclude that Momose Type $2$ primes for degree $d$ fields must be less than $\max\left(P, 4(4\log S_d + 10)^{2d}\right)$. This is precisely $C_d$, the output of \Cref{alg:momose_Type_2}.
\end{proof}

\begin{remark}
    One may bound $S_d$ in Step (1) of \Cref{alg:momose_Type_2} as follows. We seek the largest real root of $x - f(x)$, where $f(x) = (4\log x + 10)^{4d} + (4\log x + 10)^{2d} + 1$. The lower bound $(4\log x)^{4d} < f(x)$ is immediate, since $f(x) > (4\log x + 10)^{4d} > (4\log x)^{4d}$. For the upper bound, as soon as $x > e^{11} \approx 59874.142$ we have $\log x > 11 > 10$, hence $4\log x + 10 < 5\log x$; the resulting gap between $(5\log x)^{4d}$ and $(4\log x + 10)^{4d}$ then dominates the lower-order terms $(4\log x + 10)^{2d} + 1$, giving $f(x) < (5\log x)^{4d}$. Thus, for $x > e^{11}$, we have \[ (4\log x)^{4d} < f(x) < (5\log x)^{4d}.\] Now the largest real root of $x - (b\log x)^D$ is \[ \exp\left(-DW_{-1}\left(\frac{-1}{Db}\right)\right), \] where $W_{-1}$ is the $-1$ branch of the Lambert $W$ function \cite{corless1996lambert}. Indeed, substituting $x = e^u$, the equation $x = (b\log x)^D$ becomes $u\,e^{-u/D} = 1/b$, i.e. $\left(\tfrac{-u}{D}\right)e^{-u/D} = \tfrac{-1}{Db}$; this has two real solutions, given by the two real branches of $W$, and the larger root $x$ corresponds to $\tfrac{-u}{D} = W_{-1}\!\left(\tfrac{-1}{Db}\right)$. Applying this with $(b, D) = (5, 4d)$ to the upper bound $(5\log x)^{4d}$ provides an explicit upper estimate for $S_d$, and hence a finite interval in which to locate the root numerically.
\end{remark}

\begin{remark}
    One may use \cite[Theorem 1.4]{lamzouri2015conditional} instead of \cite[Theorem 5.1]{bach1996explicit}; this changes \Cref{eq:bach_sorenson_bound} to \[ q \leq \max\left(10^9, \left(\log(p) + 9 + \frac{5}{2}(\log\log(p))^2\right)^2 \right),\]
    which provides an asymptotically better bound. To keep the above argument and algorithm clean we have opted for using Bach-Sorenson's bound, although in the implementation of \Cref{alg:momose_Type_2} (which is \path{momose_type_2_uniform_bound} in \path{sage_code/type_two_primes.py}) we compute both of the corresponding values of $S_d$ in Step (1) and take the minimum.
\end{remark}

The proof of \Cref{thm:momose_type_2_unif} given above, and consequently the bound $C_d$, can be yet further improved for specific values of $d$, by observing that the constants given in \cite[Theorem 5.1]{bach1996explicit} are not best possible for each $d$, and instead using the more precise bounds to be found in Tables 1-3 of \emph{loc. cit.}. This approach can sometimes reduce the bound to the point where one can apply \Cref{alg:satisfy_condCC_unif} for all primes $p$ up to the bound. We illustrate these improvements for $d = 3$.

\begin{example}\label{ex:cubic1}
Running through the first part of the proof of \Cref{thm:momose_type_2_unif}, we obtain $S_3 = 2.4 \times 10^{29}$.
Having found this bound on $p$, observe that $\log p < 68$. If $\log p < 25$, then we would have the bound $p \leq 7.1 \times 10^{10}$. Otherwise, we have that $25 < \log p < 100$. This is relevant because now we can apply the improved bounds from Table~$3$ of Bach and Sorenson's result to conclude that we can find a $q$ which splits in $\Q(\sqrt{-p})$ such that $q \leq (A\log p + 3B + C)^2$ for the values $(A,B,C) = (1.881, 0.34, 5.5)$. Similar to above, this then subsequently yields impossibility for Type 2 primes when $p > 5 \times 10^{24}$.

Since now we know that $p < 5 \times 10^{24}$, we again apply the improved version of Bach and Sorenson's result to obtain the existence of a prime $q$ which splits in $\Q(\sqrt{-p})$ such that $q < 12{,}814$. If we knew that the two conditions in (1) (for $f = 1$ and $f = 3$) were satisfied for this $p$ and $q$, then we would have that all of them were satisfied provided condition (2) was satisfied, which would then yield impossibility for $p > 8.42 \times 10^{12}$.

We thus compute the set $T_3$ of primes $p$ dividing either $q^6 + q^3 + 1$ or $q^2 + q + 1$ for some prime $q < 12{,}814$. There are $3967$ such $p$s, and for each one, we check if there is \emph{any} cubic field $k$ such that the pair $(k,p)$ satisfies \Cref{cond:CC} using \Cref{alg:satisfy_condCC_unif}. The largest such $p$ is $36{,}523$ which is less than $8.39 \times 10^{12}$. We conclude that Type $2$ primes for cubic fields must be less than $8.39 \times 10^{12}$.

This bound is now small enough for one to apply \Cref{alg:satisfy_condCC_unif} for each $p$ up to this bound.  Given the large values here, this was achieved via the optimised PARI/GP implementation \path{partype2primesuniform.gp} found in the \path{gp_scripts} folder of the package, and one finds that $253{,}507$ is the largest such prime.
\end{example}

\begin{remark}
It is an open question whether the bound $253{,}507$ in the above example is sharp; that is, whether there exists an elliptic curve over a cubic field admitting a Momose Type~$2$ isogeny of degree $253{,}507$. However, this bound \emph{is} sharp for the primes which satisfy \Cref{cond:CC} for cubic fields: writing $p = 253{,}507$, the pair $(k,p)$ satisfies \Cref{cond:CC} for any cubic field $k$ in which every rational prime $q < p/4$ is inert. To see this, recall that \Cref{cond:CC} concerns only primes of \emph{odd} residue degree, and that in a cubic field no residue degree exceeds $3$; the relevant residue degrees are thus $f = 1$ and $f = 3$. Since each rational prime $q < p/4$ is inert in $k$, the only prime of $k$ above it has residue degree $3$, so no prime of $k$ of residue degree $1$ lies above a rational prime $q < p/4$. Hence only $f = 3$ can meet the hypotheses (1) and (2) of \Cref{cond:CC}, in which case (1) becomes $q^3 < p/4$, i.e. $q \leq 37$ (the largest prime below $(p/4)^{1/3} \approx 39.87$); the case $f = 1$ --- a rational prime $q < p/4$ with $p \nmid q^2 + q + 1$ --- cannot occur. A direct check shows that every prime $q \leq 37$ fails to split in $\Q(\sqrt{-p})$, so the conclusion of \Cref{cond:CC} holds whenever its hypotheses (1) and (2) do --- condition (3) need not even be examined --- and hence $(k,p)$ satisfies \Cref{cond:CC}.

Such a cubic field may be readily constructed in a few lines of Sage code (see the function \path{condition_CC_field} in \path{sage_code/type_two_primes.py}). The resulting defining polynomial of this cubic field is a monic cubic polynomial whose non-leading coefficients all have more than $27{,}380$ digits. Finding a cubic field satisfying \Cref{cond:CC} for this prime with smallest absolute value of coefficients of its defining polynomial has not been attempted in this work.
\end{remark}

\begin{remark}\label{rem:pari_bound}
    To obtain the values in \Cref{tab:c_d} we have employed all of the improvements illustrated in \Cref{ex:cubic1} \emph{except} for the final application of \Cref{alg:satisfy_condCC_unif} on all primes up to the large bound. This was only just about feasible for $d=3$, since running \path{partype2primesuniform.gp} up to $8.39 \times 10^{12}$ took most of a day on an old laptop.
\end{remark}

\bibliographystyle{alpha}
\bibliography{references.bib}{}

\end{document}